\numberwithin{equation}{section}
\theoremstyle{plain}
\newtheorem{thm}{Theorem}[section]
\newtheorem{lem}[thm]{Lemma}
\newtheorem{prop}[thm]{Proposition}
\newtheorem{cor}[thm]{Corollary}
\theoremstyle{definition}
\newtheorem{defn}[thm]{Definition}
\theoremstyle{remark}
\newtheorem{rem}[thm]{Remark}
\newcommand{\LieTr}[2]{e^{\im#1} #2 e^{-\im#1}}
\DeclareMathOperator{\diag}{diag}
\DeclareMathOperator{\meas}{meas}
\newcommand{\lip}{{{\rm Lip}}}
\newcommand{\wlip}[1]{{{\rm Lip}(#1)}}
\newcommand{\bsigma}{\boldsymbol{\sigma}}
\renewcommand{\bar}{\overline}
\newcommand{\size}{{\rm size}}
\newcommand{\sgn}{{\rm sgn}}
\title{Reducibility for a fast driven linear Klein-Gordon equation}
\author{
L. Franzoi\footnote{ International School for Advanced Studies (SISSA), Via Bonomea 265, 34136, Trieste, Italy \newline
 \textit{Email: } \texttt{luca.franzoi@sissa.it}} 
,
A. Maspero\footnote{ International School for Advanced Studies (SISSA), Via Bonomea 265, 34136, Trieste, Italy \newline
 \textit{Email: } \texttt{alberto.maspero@sissa.it}}
}
\begin{document}
	
\maketitle

\begin{abstract}
We prove a reducibility result for a linear Klein-Gordon equation with a quasi-periodic driving on a compact interval with Dirichlet boundary conditions.
   No assumptions are made on the size of the driving, however we require it to be fast oscillating. 
   In particular,  provided that the external frequency  is sufficiently large and chosen from a Cantor set of large measure, the original equation is conjugated to a time independent, diagonal one.
We achieve this  result in two steps. First, we perform a preliminary transformation, adapted to fast oscillating systems, which moves the original equation in a perturbative setting.
Then we show that this new equation can be put to constant coefficients by applying  a  KAM reducibility scheme, whose convergence requires a new type of   Melnikov conditions.
\end{abstract}

\section{Introduction}
We consider a linear Klein-Gordon  equation with quasi-periodic driving
\begin{equation}
	\label{eq:KG}
	\d_{tt}u-\d_{xx}u+\tm^2 u  + V(\omega t,x)u = 0 \ , \qquad x\in[0,\pi]\ , \ \ t\in\R \ ,
\end{equation}
with spatial Dirichlet boundary conditions $u(t, 0) = u(t, \pi) = 0$.\\
The potential $V:\T^\nu\times [0,\pi]\rightarrow \R$,  is quasi-periodic in time with a frequency vector $\omega\in\R^\nu\setminus\{0\}$.
The main feature of this driving  is that 
it is  not perturbative in size, but we require  it to be  
 fast oscillating, namely $\abs\omega\gg 1$.\\
The goal of our paper is to provide, for any frequency $\omega$ belonging to a Cantor set of large measure,  a reducibility result for the system \eqref{eq:KG}. That is, we  construct a change of coordinates which conjugates  equation  \eqref{eq:KG} into a diagonal, time independent   one.

As long as we know, this is the first result  of reducibility in an infinite dimensional setting in which the perturbation is not assumed  to be small in size, but only fast oscillating.

The proof is carried out in two steps, combining a preliminary transformation,  adapted to fast oscillating systems,  with a KAM reducibility scheme which completely removes the time dependence from the equation.
In particular we first perform a change of coordinates, following  \cite{abanin1}, that conjugates \eqref{eq:KG} to an equation with driving of size  $|\omega|^{-1}$, and thus perturbative in size. 
The price to pay is that the new equation 
might not fit in the standard KAM scheme developed by Kuksin in \cite{kuk87}.
% , as unbounded terms could arise in it.
% might 
%not fulfill the standard assumptions of Kuksin be an unbounded operator.
%,  making it difficult to implement a KAM reducibility scheme.
 The problem is overcome in our model by exploiting the  pseudodifferential properties of the operators involved, showing that the new perturbation features regularizing properties. 
 
The second key ingredient of the proof concerns appropriate {\em balanced} Melnikov conditions (see \eqref{eq:meln_intro}), which allow us to perform a convergent KAM reducibility iteration. 

To carry out this program, we strongly exploit the fact  that the dispersion law of the system is asymptotically linear in the frequency space; 
this is used in a direct way to  prove the balanced Melnikov conditions, and in an indirect way to prove that the new forcing term generated by the preliminary transformation is a bounded operator (see also Remark \ref{rem:p}). 
This is the main reason why we consider the Klein-Gordon system.
  That being said, we suspect a similar result to be true also for systems with superlinear dispersion law, as the Schr\"odinger equation, but new ideas are needed to overcome the mentioned problems.

\vspace{1em}

From a mathematical point of view, our result is part of the attempts to extend classical Floquet theory  and  its quasi-periodic generalization to infinite dimensional systems. 
While many progresses have been made in the last 20 years to prove non perturbative reducibility for finite (and actually low) dimensional systems  
\cite{eli99,krik99, krik01, chav11, avila},
in the infinite dimensional case the only available results nowadays  deal with systems which are small perturbations of a diagonal operator, i.e. of the form  $D + \epsilon V(\omega t)$, where $D$ is diagonal, $\epsilon$ small and $\omega$ in some Cantor set.
In this case the literature splits essentially in two parts: the first one dealing with the case of perturbations which are  bounded operators \cite{  EK09,  GT11,   GP,   GP2, liangwang}, while the second one (of more recent interest) with   unbounded  ones \cite{ BG01, liuyuan, BBM14, FP15,  Bam16I, BGMR1}.\\
In particular, for the wave and Klein-Gordon  equations, 
the papers  \cite{ poschel, CY00, fang14, GP2} are in the  first group, while   \cite{ berti14, mont17} belong to the second one.
% perturbations small in size.
% In particular with operators of the form $D + \epsilon V(\omega t)$, where $D$ is diagonal, $\epsilon$ small and $\omega$ in some Cantor set.
%, in order to avoid resonances with the eigenvalues of $D$.
%  (see e.g.  \cite{ BG01,  EK09, liuyuan,  GT11,   GP,   GP2, liangwang} for bounded perturbations and the more recent 
%  \cite{BBM14, FP15,  Bam17b,Bam16I, BGMR1, mont17} for unbounded ones).
  In any case, all the previous results
    require a  smallness assumption on the size of the perturbation.

In order to deal with  perturbations that are periodic in time and fast oscillating, 
in \cite{ abanin2, abanin4, abanin1} Abanin, De Roeck, Ho and Huveneers developed an adapted normal form  that generalizes the classical Magnus expansion \cite{magnus}.
Such a normal form,  
which from now on  we call   {\em Magnus normal form}, allows to extract a time independent Hamiltonian (usually called the {\em effective} Hamiltonian), which approximates well the dynamics up to some finite but  very long times.
In \cite{abanin1}, the authors apply the Magnus normal form to the study of some quantum many-body systems (spin chains) with a fast periodic driving.
%, and extract an effective Hamiltonian which approximate well the dynamics for exponentially long (in $|\omega|$) times.
Although the Magnus normal form was developed for periodic systems, we extend it here for quasi-periodic ones and we use it as a preliminary transformation that moves the problem in a more favourable setting for starting a KAM reducibility scheme.
However, we point out that an important difference between \cite{abanin1} and our work lies in the fact that, while in \cite{abanin1} all the involved operators are bounded, on the contrary our principal operator is an unbounded one.

In case of systems of the form $H_0 + V(t)$, where the perturbation $V(t)$ is neither small in size nor fast oscillating, a general reducibility is not known. 
However, in same cases  it is possible to find some results of "almost reducibility"; that is, the original Hamiltonian is conjugated to one of the form $H_0 + Z(t) + R(t)$, where $Z(t)$ commutes with $H_0$, while $R(t)$ is an arbitrary smoothing operator, see e.g. \cite{BGMR2}.
This normal form ensures  upper bounds on the speed of transfer of energy from low to high frequencies; e.g. it implies that 
the  Sobolev norms of each  solution grows at most as $t^\epsilon$ when $t \to \infty$, for any arbitrary small $\epsilon >0$.
This procedure (or a close variant of it), has been applied also in   \cite{del2, MaRo,  Mon17}.  \\
%This phenomenon is captured by the growth of Sobolev norms, and 
%The 
% typical results in this context are upper bounds on the growth of Sobolev norms of  the form $\norm{u(t)}_{H^r} \sleq (1+ |t|)^\epsilon$ when $t \to \infty$, for any $\epsilon >0$.
There are also  examples in \cite{bou99, del, ma18} where the authors  
engineer periodic drivings  aimed to  transfer  energy from low to high frequencies and leading to  unbounded growth of Sobolev norms (see also Remark \ref{bou} below).

Finally, we want to mention also the papers 
\cite{BB08, CG17}, where KAM techniques are applied to construct quasi-periodic solutions with
% large frequency. 
$|\omega| \gg 1$. 
In   \cite{BB08} this is shown for 
% the authors construct  quasi-periodic solutions with large frequency of 
 a nonlinear wave equation with Dirichlet boundary conditions, however  reducibility is not obtained. 
 In 
\cite{CG17}, KAM techniques are applied to a many-body  system with fast driving; 
%More precisely the  authors consider an 
% isotropic XY quantum spin chain with a periodically
%time-dependent  external forcing, and 
the authors construct a periodic orbit with large frequency and prove its asymptotic stability.

\vspace{1em}
Before closing this introduction, we mention that  periodically driven systems have also a  great interest in physics, both theoretically and experimentally.
Indeed  such systems often exhibit a rich and  surprising behaviour, like the Kapitza pendulum \cite{kapi},  where the fast periodic driving stabilizes the otherwise unstable equilibrium point in which the pendulum is upside-down.
More recently, a lot of attention  has been dedicated to fast periodically driven  many-body systems \cite{eisert, goldman, kitagawa, jotzu}; here the interest is the possibility of  engineering periodic drivings for realizing novel quantum states of matter; this procedure, commonly called ``Floquet engineering'' \cite{bukov}, has been implemented in several physical systems, including   cold atoms,   graphenes and  crystals.

\subsection{Main result}
The potential driving $V(\omega t, x)$  is treated as a smooth function $V: \T^\nu \times [0, \pi]  \ni (\theta, x) \mapsto V(\theta, x)\in \R$,  $\nu \geq 1$,  which  satisfies  two conditions:
\begin{itemize}
	\item[(\b{V1})] The even extension in $x$ of $V(\theta,x)$ on the torus $\T\simeq [-\pi,\pi]$, which we still denote by $V$, is smooth in both  variables and it extends analytically in $\theta$ in a proper complex neighbourhood of $\T^\nu$ of width $\rho>0$. In particular, for any $\ell\in\N$, there is a constant $C_{\ell,\rho}>0$ such that
	\begin{equation*}
		\abs{\partial_x^\ell V(\theta,x)}\leq C_{\ell,\rho} \quad \forall\,x\in\T \ , \ \abs{{\rm Im}\,\theta}\leq \rho \ ;
	\end{equation*}
	\item[(\b{V2})] $\int_{\T^\nu}V(\theta,x)\wrt \theta = 0$   for any   $ x\in[0,\pi]$.
	% $\abs{V(\theta,x)}\leq \tc<\infty$ uniformly in $\theta\in\T^\nu$, $x\in[0,\pi]$.
\end{itemize}

%In particular, assumption (\b{V0}) allows us to recast the wave equation \eqref{eq:wave} as a linear Klein-Gordon equation
%\begin{equation}\label{eq:KG}
%	\d_{tt}u-\d_{xx}u+\tm^2 u  + V(\omega t,x)u = 0 \ , \qquad x\in[0,\pi]\ , \ \ t\in\R \ ,
%\end{equation}
%where now the potential $V$ has zero angle average:
%\begin{equation}\tag{\b{V2}}
%	\int_{\T^\nu}V(\theta,x)\wrt \theta = 0 \  \ \text{ uniformely in } \ x\in[0,\pi].
%\end{equation}
To state precisely our main result, equation \eqref{eq:KG} has to be rewritten as a Hamiltonian system. 
We introduce the new variables 
\begin{equation}
\label{}
\vf := B^{1/2}u  + \im B^{-1/2} \partial_t u \ , 
\qquad \bar \vf := B^{1/2}u  - \im B^{-1/2} \partial_t u \ ,
\end{equation}
where
\begin{equation}
\label{def:B}
B:=\sqrt{-\Delta +\tm^2} \ ;
\end{equation}
note that   the operator $B$ is invertible also when $\tm  = 0$, since we consider Dirichlet boundary conditions. In the new variables
 equation \eqref{eq:KG} is equivalent to
\begin{equation}\label{eq:KG_c}
\im\partial_t \vf (t) = B\vf(t)+\frac{1}{2}\,B^{-1/2}V(\omega t)B^{-1/2}(\vf(t) +\overline{\vf}(t)) \  . 
\end{equation}
%\begin{equation}\label{eq:bourgain_var}
%\left(\begin{matrix}
%u_t \\ v_t
%\end{matrix}\right):=\left(\begin{matrix}
%-Bv \\ Bu+B^{-1}Vu
%\end{matrix}\right), \quad \vf := u+\im v, \quad B:=\sqrt{-\Delta +\tm^2}
%\end{equation}
%so that equation \eqref{KG} is equivalent to
%\begin{equation}\label{eq:KG_c}
%\im\partial_t \vf (t) = B\vf(t)+B^{-1/2}V(\omega t)B^{-1/2}(\vf(t) +\overline{\vf}(t))
%\end{equation}
Taking \eqref{eq:KG_c} coupled with its complex conjugate, we obtain the following system
\begin{equation}\label{eq:KG_matrix}
	\im \partial_t \varphi(t) = \bH(t)\varphi(t) \ , \quad \bH(t):=\left(\begin{matrix}
	B & 0 \\ 0 & -B
	\end{matrix}\right) + \frac{1}{2}\,B^{-1/2}V(\omega t, x )B^{-1/2}\left(\begin{matrix}
	1 & 1 \\ -1 & -1
	\end{matrix}\right) \ , 
\end{equation}
where, abusing notation, we denoted $ \varphi(t)\equiv \left(\begin{matrix}
	\varphi(t) \\ \bar{\varphi}(t)
	\end{matrix}\right)$ the vector with the components  $\vf, \bar \vf$.
The phase space  for \eqref{eq:KG_matrix}  is $\cH^r\times\cH^r , $
 where, for $r\geq 0$, 
\begin{equation}\label{eq:sine_sob}
	 \cH^r:=\Set{ \vf(x)=\sum_{m\in\N} \vf_m\sin(mx), \ x\in [0,\pi] | \norm{\vf}_{\cH^r}^2:=\sum_{m\in\N}\braket{m}^{2r}\abs{\vf_m}^2<\infty  }  \ .
\end{equation}
Here we have used the notation $\braket{m}:=(1+\abs m^2)^{\frac{1}{2}}$, which will be kept throughout all the article.	
We define the $\nu$-dimensional annulus of size $\tM>0$ by
$$R_{\tM}:=\bar{B_{2\tM}(0)}\backslash B_{\tM}(0)\subset \R^{\nu} \ ;$$
here we denoted by $B_M(0)$ the ball of center zero and radius $M$ in the Euclidean topology of $\R^\nu$.
\begin{thm}
\label{thm:main}
	Consider the system \eqref{eq:KG_matrix} and assume (\b{V1}) and (\b{V2}).
%	 suppose that $V:\T^\nu\times[0,\pi]\rightarrow\R$ extends analytically in $\theta$ in a proper complex neighbourhood of $\T^\nu$ of width $\rho>0$. 
	 Fix arbitrary $r, \tm \geq 0$ and $\alpha\in(0,1)$. Fix also an arbitrary $\gamma_*>0$ sufficiently small.	 \\
%	 $\tau \geq (1+\frac{2}{\alpha})\nu+3+\alpha$.\\
	Then there exist $\tM_* >1 $, $C >0$ and,
%	$\tM_*=\tM_*(\tm, \alpha,\gamma,\rho)\gg 1$ 
 for any $\tM\geq \tM_*$, a subset $\Omega_{\infty}^\alpha=\Omega_\infty^\alpha(\tM,\gamma_*)$ in $R_\tM$, fulfilling
	\begin{equation}
		\frac{\meas(R_{\tM}\backslash\Omega_\infty^\alpha)}{\meas(R_{\tM})} \leq  C \gamma_* , 
	\end{equation}
	such that the following holds true.	 
	For any frequency vector $\omega\in\Omega_\infty^\alpha$, there exists an  operator $\cT(\omega t; \omega)$, bounded in $\cL(\cH^r\times \cH^r)$, quasi-periodic in time and analytic  in a shrunk neighbourhood of $\T^\nu$ of width $\rho/8$, such that the change of coordinates $\vf = \cT(\omega t; \omega)\psi$  conjugates \eqref{eq:KG_matrix} to the diagonal time-independent system
	\begin{equation}\label{eq:eff_sys}
		\im \dot{\psi}(t)=\bH^{\infty,\alpha}\psi(t) \ , \quad \bH^{\infty,\alpha}:=\left( \begin{matrix}
		D^{\infty,\alpha} & 0 \\ 0 & -D^{\infty,\alpha}
		\end{matrix} \right) \ , \ D^{\infty,\alpha}=\diag\Set{\lambda_j^{\infty}(\omega) | j \in\N} .
	\end{equation}
	The transformation $\cT(\omega t;\omega)$ is close to the identity, in the sense that there exists $C_r>0$ independent of $\tM$ such that
	\begin{equation}
		\norm{\cT(\omega t;\omega)-\uno}_{\cL(\cH^r\times\cH^r)} \leq \frac{C_r}{ \tM^{\frac{1-\alpha}{2}}} \ .
	\end{equation}
	The new  eigenvalues $(\lambda_j^{\infty}(\omega))_{j\in\N}$ are real, Lipschitz in $\omega$,  and admit the following asymptotics for $j\in\N$:
	\begin{equation}\label{eq:eff_eig}
		\lambda_j^{\infty}(\omega) \equiv \lambda_j^{\infty}(\omega,\alpha) = \lambda_j + \varepsilon_j^{\infty}(\omega,\alpha) \ , \quad \ \varepsilon_j^{\infty}(\omega,\alpha)\sim O\left( \frac{1}{\tM j^{\alpha}} \right)  \ ,
	\end{equation}
	where $\lambda_j = \sqrt{j^2 + \tm^2}$ are the eigenvalues of the operator $B$.
\end{thm}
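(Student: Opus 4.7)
The plan is to follow the two-step strategy announced in the introduction: a preliminary transformation that carries \eqref{eq:KG_matrix} into a perturbative regime, followed by an iterative KAM reducibility scheme that removes the remaining time dependence.

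\textbf{Step 1 (Magnus preliminary transformation).} Rewrite \eqref{eq:KG_matrix} as $\im\partial_t\vf = (H_0 + W(\omega t))\vf$, where $H_0 = \diag(B,-B)$ and $W$ is the off-diagonal piece built from $B^{-1/2}V B^{-1/2}$; by (V2) one has $\langle W\rangle_\theta = 0$. Following the adapted Magnus normal form of \cite{abanin1} extended to the quasi-periodic setting, I look for a generator $A(\theta;\omega)$ solving the cohomological equation $\omega\cdot\partial_\theta A = -W$, which is solvable mode by mode in Fourier and yields $A$ of size $|\omega|^{-1}$. The change of variables $\vf = e^{\im A(\omega t;\omega)}\psi$ conjugates the equation into $\im\partial_t\psi = (H_0 + R(\omega t;\omega))\psi$, where $R$ collects $\im[H_0,A]$ together with iterated commutators of a BCH-type expansion, each carrying at least one factor $1/|\omega|$. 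Although $H_0$ is unbounded of order one and $A$ is bounded, the pseudodifferential commutator $[B,B^{-1/2}V B^{-1/2}]$ actually gains one derivative, so that $R$ is still bounded in $\cL(\cH^r\times\cH^r)$ with norm $\lesssim \tM^{-1}$.

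\textbf{Step 2 (KAM iteration and Melnikov conditions).} With the system now in the form $\im\partial_t\psi = (H_0+P_0)\psi$ with $\|P_0\|\lesssim \tM^{-1}$, I would implement a quadratic KAM scheme: at step $n$, solve the homological equation
\begin{equation*}
\omega\cdot\partial_\theta X_n + \im[H_n,X_n] = P_n - \langle P_n\rangle_\theta
\end{equation*}
in the Fourier--eigenbasis, absorb $\langle P_n\rangle_\theta$ into the diagonal $H_{n+1}$ (which produces the eigenvalue corrections of \eqref{eq:eff_eig}), and conjugate by $e^{\im X_n}$ to obtain a perturbation of quadratically smaller size. Convergence rests on \emph{balanced} second Melnikov conditions of the form
\begin{equation*}
|\omega\cdot k + \lambda_j^{(n)} \pm \lambda_{j'}^{(n)}| \ \geq\ \gamma_*\,\frac{|\omega|^{-a}}{\langle k\rangle^\tau \,\max\{j,j'\}^b},
\end{equation*}
with exponents $a,b,\tau$ tuned to the regularizing properties of $R$ and to the loss of analyticity $\rho\to \rho/8$ that accumulates through infinitely many steps. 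Standard telescoping estimates then yield a convergent composition $\cT = e^{\im A}\prod_{n\geq 0}e^{\im X_n}$, the quantitative bound $\|\cT-\uno\|\lesssim \tM^{-(1-\alpha)/2}$, and asymptotics $\varepsilon_j^\infty(\omega,\alpha) = O(\tM^{-1} j^{-\alpha})$, where the $j^{-\alpha}$ decay is inherited from the off-diagonal decay of $R$.

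\textbf{Step 3 (Measure estimate and main obstacle).} For each fixed $(k,j,j')$ the gradient $\nabla_\omega(\omega\cdot k + \lambda_j\pm\lambda_{j'}) = k$, so a Fubini argument on the annulus $R_\tM$ bounds the resonant slice by a constant times $\gamma_*|\omega|^{-a}/(|k|^{\tau+1}\max\{j,j'\}^b)$. The asymptotic linearity $\lambda_j = j + O(j^{-1})$ ensures that, for each $k$, only $O(\langle k\rangle)$ pairs $(j,j')$ contribute, so the series over $(k,j,j')$ converges whenever $\tau>\nu$ and $b$ is sufficiently large, giving a Cantor set $\Omega_\infty^\alpha$ whose relative complement in $R_\tM$ is of order $\gamma_*$ \emph{uniformly in} $\tM\geq \tM_*$. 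The hardest single point, in my view, is to calibrate the pseudodifferential order gained by $R$ against the Melnikov losses so that the KAM convergence and the measure estimate close with the \emph{same} set of exponents: this is where the asymptotic linearity of the Klein-Gordon dispersion enters in an essential way, and is precisely the feature one would lose for superlinear dispersions such as the Schr\"odinger equation (cf. Remark \ref{rem:p}).
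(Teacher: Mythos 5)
Your overall two-step architecture (Magnus preliminary transformation followed by a KAM reducibility iteration with modified Melnikov conditions) matches the paper's. However there are two substantive gaps.

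First, the balanced Melnikov conditions you write down are not the ones that make the scheme close, and would in fact break it. You propose a lower bound of the form $\gamma_*\,|\omega|^{-a}\langle k\rangle^{-\tau}\max\{j,j'\}^{-b}$ with $b$ "sufficiently large"; but large $b$ means the lower bound degenerates as $\max\{j,j'\}\to\infty$, and when you divide $\wh{P}^j_{l}(k)$ by the small denominator to solve the homological equation, the generator $X$ acquires matrix coefficients growing like $\max\{j,l\}^{b}$, i.e.\ $X$ becomes an unbounded operator and $e^{\im X}$ is no longer a well-defined bounded change of variables. What the paper actually imposes is
\begin{equation*}
\abs{\omega\cdot k+\lambda_j\pm\lambda_l}\ \geq\ \frac{\wtgamma}{\braket{k}^{\widetilde\tau}}\,\frac{\braket{j\pm l}^{\alpha}}{\tM^{\alpha}},
\end{equation*}
so the gain is in the \emph{numerator} and it is a power of $\braket{j\pm l}$, not a negative power of $\max\{j,l\}$: in the difference case this never degenerates (it is $\geq 1$), and in the sum case it even grows. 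The "balance" in the paper is precisely between the loss $\tM^{-\alpha}$ and the gain $\braket{j\pm l}^\alpha$, which is calibrated against the order $-1$ of $V^d$ and order $0$ of $V^o$ coming out of the Magnus step (this is what the classes $\cM_{\rho,s}(\alpha,0)$ and $\cM_{\rho,s}(\alpha,\alpha)$ in Section \ref{sec:fun} encode). Your $\max\{j,j'\}^{-b}$ version cannot be calibrated in this way.

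Second, in Step 1 your argument that $R$ is bounded only addresses the first commutator $\im[H_0,A]$ via the one-derivative gain of $[B,B^{-1/2}VB^{-1/2}]$, leaving all higher iterated commutators of the BCH-type expansion unaccounted for. The paper sidesteps this entirely by a structural miracle: because $\bsigma_4^2=\mathbf{0}$, it computes $\ad_{\bX}^2(\bH_0)=4XBX\bsigma_4$ and $\ad_{\bX}^3(\bH_0)=\mathbf{0}$, so the Magnus remainder $\bR$ in \eqref{eq:magnus_rem2} vanishes identically and the new perturbation is given by the closed formula \eqref{eq:magnus_4}. Without this observation you would have to estimate an infinite series of nested commutators against the unbounded operator $B$, which your sketch does not do, and which would be considerably more delicate.
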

\begin{rem}
	In particular, back to the original coordinates, equation \eqref{eq:KG} is reduced to
	\begin{equation}
		\d_{tt}u+\left(D^{\infty,\alpha}\right)^2u = 0 \ .
	\end{equation}
	\end{rem}
\begin{rem}
	The parameter $\alpha$, which one chooses and fixes in the real interval $(0,1)$, influences the asymptotic expansion of the final  eigenvalues,  as one can read from \eqref{eq:eff_eig}. Also the construction of the set of the admissible frequency vectors heavily depends on this parameter.
\end{rem}
\begin{rem}
We believe that  the assumptions of Theorem \ref{thm:main} can be weakened, for example asking only Sobolev regularity for $V(\theta, x)$, dropping ({\bf V2}) or using periodic boundary conditions; these issues  will be addressed  elsewhere.
\end{rem}
\begin{rem}
In Theorem \ref{thm:main} we can take also $\tm = 0$; this is due to the fact that, with Dirichlet boundary conditions, the unperturbed eigenvalues $\lambda_j$ are simple,  integers and  their corrections are small (see \eqref{eq:eff_eig}). 
This implies that it is enough to  move the frequency vector $\omega$ for avoiding resonances.
\end{rem}

Let  us denote by $\cU_{ \omega}( t,\tau)$ the propagator generated by \eqref{eq:KG_matrix}  such that $\cU_{ \omega}(\tau, \tau)=\uno$, $\forall \tau \in \R$.
An immediate consequence of Theorem \ref{thm:main} is that we have a Floquet   decomposition:
\begin{equation}\label{decom}
 \cU_{\omega}( t,\tau) =  \cT(\omega t; \omega)^* \circ  {\rm e}^{-\im(t-\tau)\bH^{\infty, \alpha}} \circ  \cT(\omega\tau; \omega) \ .
\end{equation}
%Notice that from the  construction of $U^*_\omega(\theta)$,
%$\theta\in\T^n:=\R^n /\Z^n$    will be such that $U_\omega(0)=\1$.  
%\red{not true anymore: we added the diagonalizing transformation in $U_\omega$} 

Another consequence of \eqref{decom} is  that, for any $r \geq0$, the norm $\norm{\cU_{\omega}(t,0) \vf_0}_{\cH^r \times \cH^r}$  is bounded uniformly in time:
\begin{cor}
\label{cor.1}
Let $\tM \geq \tM_*$ and $\omega \in \Omega_\infty^\alpha$. 
For any $r \geq 0$ one has
\begin{equation}\label{unest}
 c_{r}\norm{\vf_0}_{\cH^r\times \cH^r} \leq\norm{ \cU_{\omega}( t,0)\vf_0}_{\cH^r\times \cH^r}
  \leq C_{r}\norm{\vf_0}_{\cH^r\times \cH^r} ,\quad \forall t\in\R \ , 
 \forall\vf_0\in {\cH^r\times \cH^r}, 
\end{equation}
for some  $ c_{r}>0, C_{r}>0$.\\
More precisely,  there exists a constant $c'_{r}>0$ s.t. if the initial data $\vf_0 \in \cH^{r}\times \cH^r$ then
\begin{equation*}
\left(1- \frac{c'_r}{ \tM^{\frac{1-\alpha}{2}}}\right)\norm{\vf_0}_{\cH^r\times \cH^r}   \leq \norm{\cU_{\omega}( t,0)\vf_0}_{\cH^r\times \cH^r} \leq 
\left(1+ \frac{c'_r}{ \tM^{\frac{1-\alpha}{2}}} \right)
\norm{\vf_0}_{\cH^r\times \cH^r} , \quad \forall t\in\R \ .
\end{equation*}
\end{cor}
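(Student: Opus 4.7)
The plan is to read off Corollary \ref{cor.1} directly from the Floquet decomposition \eqref{decom} evaluated at $\tau = 0$, by exploiting two facts: the effective propagator $e^{-\im t \bH^{\infty,\alpha}}$ acts as an isometry on $\cH^r \times \cH^r$ for every $r \geq 0$, while the conjugating transformation $\cT(\omega t;\omega)$ and its adjoint are both small perturbations of the identity, uniformly in $t\in\R$.

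The isometry property is immediate: since $\bH^{\infty,\alpha}$ is block-diagonal and each block is diagonal in the sine basis with real eigenvalues $\pm \lambda_j^\infty(\omega)$ (by \eqref{eq:eff_sys} together with the reality assertion of Theorem \ref{thm:main}), the flow $e^{-\im s\bH^{\infty,\alpha}}$ acts componentwise on $\cH^r\times\cH^r$ by multiplication by unit-modulus phases $e^{\mp \im s\lambda_j^\infty(\omega)}$ and therefore preserves the norm exactly, for every $s\in\R$ and every $r \geq 0$. For the transformation, I would set $\delta := C_r \, \tM^{-(1-\alpha)/2}$, so that Theorem \ref{thm:main} gives $\norm{\cT(\omega t;\omega)-\uno}_{\cL(\cH^r\times\cH^r)} \leq \delta$ uniformly in $t$; since taking adjoints preserves the operator norm, the same bound holds for $\cT(\omega t;\omega)^*$. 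Choosing $\tM_*$ large enough that $\delta<1$, the forward and reverse triangle inequalities then yield
\begin{equation*}
(1-\delta)\norm{w}_{\cH^r\times\cH^r} \leq \norm{\cT(\omega t;\omega)w}_{\cH^r\times\cH^r} \leq (1+\delta)\norm{w}_{\cH^r\times\cH^r},
\end{equation*}
and identically with $\cT(\omega t;\omega)$ replaced by $\cT(\omega t;\omega)^*$.

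Composing these three estimates through \eqref{decom} at $\tau=0$ produces
\begin{equation*}
(1-\delta)^2 \norm{\vf_0}_{\cH^r\times\cH^r} \leq \norm{\cU_\omega(t,0)\vf_0}_{\cH^r\times\cH^r} \leq (1+\delta)^2 \norm{\vf_0}_{\cH^r\times\cH^r},
\end{equation*}
uniformly in $t \in \R$, and absorbing the quadratic correction $\delta^2$ into a larger linear one (using $\delta \leq 1$) gives the refined statement with, for instance, $c'_r = 3\, C_r$. The cruder inequality \eqref{unest} follows at once upon possibly enlarging $\tM_*$ further to guarantee $1 - c'_r/\tM^{(1-\alpha)/2} \geq c_r > 0$. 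No genuine obstacle arises in this argument; the only mildly subtle point is that the operator-norm bound on $\cT(\omega t;\omega)$ must be uniform in the time variable, which is ensured by the analyticity of $\cT$ in $\theta = \omega t$ on a fixed complex strip of width $\rho/8$ around $\T^\nu$, as stated in Theorem \ref{thm:main}.
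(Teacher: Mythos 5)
Your overall strategy --- read the two-sided bound off the Floquet decomposition \eqref{decom} at $\tau = 0$, using that $e^{-\im t\bH^{\infty,\alpha}}$ is an isometry of $\cH^r\times\cH^r$ (because $\bH^{\infty,\alpha}$ is diagonal with real eigenvalues $\pm\lambda_j^\infty$) and that the conjugating maps are uniformly-in-$t$ close to $\uno$ --- is the natural and correct one, and the paper does not spell out its own argument for this corollary, so there is no competing proof to compare against.

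The one step that needs repair is the claim that ``taking adjoints preserves the operator norm,'' invoked to transfer the estimate $\norm{\cT(\omega t;\omega)-\uno}_{\cL(\cH^r\times\cH^r)}\leq\delta$ to $\cT(\omega t;\omega)^*$. As Section \ref{sec:mo} fixes, $A^*$ denotes the adjoint with respect to the $\cH^0$ scalar product; for $r\neq 0$ duality gives $\norm{A^*}_{\cL(\cH^{-r})}=\norm{A}_{\cL(\cH^r)}$, not $\norm{A^*}_{\cL(\cH^r)}=\norm{A}_{\cL(\cH^r)}$, so the inference is false as stated. Two honest fixes, both available within the paper's framework: (a) every generator $\bX$ used in the Magnus and KAM steps has the Hamiltonian structure \eqref{eq:Amatrix}--\eqref{struttura}, which entails $\bX^*=\bsigma_3\bX\bsigma_3$ and hence $\cT^*=\bsigma_3\cT^{-1}\bsigma_3$; since conjugation by $\bsigma_3$ is an isometry of $\cH^r\times\cH^r$, you are reduced to bounding $\norm{\cT^{-1}-\uno}$, which for $\delta<1$ follows from Neumann series, $\norm{\cT^{-1}-\uno}\leq\delta/(1-\delta)$; (b) alternatively, the $s$-decay norm \eqref{eq:sdecay_norm} is manifestly invariant under the $\cH^0$-adjoint (it only sees $\sup_{|m-n|=h}\abs{A_m^n}$), and Remark \ref{rem:opnorm_vs_sdecay} converts decay-norm bounds into $\cL(\cH^r)$ bounds; since $\cT-\uno$ is assembled from pieces with controlled decay norms, $\cT^*-\uno$ inherits the same estimate. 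Either repair lets the remainder of your computation go through unchanged, at the cost of a harmless enlargement of the constant (for instance $c'_r=5C_r$ rather than $3C_r$ if one takes $\delta\leq 1/2$ and uses the Neumann route).
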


\begin{rem}
\label{bou}
Corollary \ref{cor.1} shows that, if the frequency $\omega$ is chosen in the Cantor set $\Omega^\alpha_\infty$,  no phenomenon of  growth of Sobolev norms can happen. 
On the contrary, if $\omega$ is chosen  resonant, one can construct drivings which provoke  norm explosion with exponential rate, see  \cite{bou99} (see also \cite{ma18} for other examples).
\end{rem}

\begin{rem}
For nonlinear PDEs, the property that all solutions have uniformly  bounded Sobolev norms is  typical connected to integrability.
For example, the 1 dimensional defocusing NLS, the KdV and Toda chain exhibit this property (see e.g. \cite{AlbertoVeyPaper, BambusiM16, Kappeler16}).
\end{rem}

\subsection{Scheme of the proof}
Our proof  splits into three different parts, which we now summarize.

\paragraph*{The Magnus normal form. } In Section \ref{sec:magnus} we perform a preliminary transformation, adapted to fast oscillating systems, which moves the non-perturbative equation \eqref{eq:KG_matrix} into a pertubative one where the size of the transformed quasi-periodic potential is as small as large is the module of the frequency vector. Sketchily, we perform a change of coordinates  which conjugates 
\begin{equation}
\label{sss}
	\left\{ \begin{matrix}
	\bH(t)=\bH_0+\bW(\omega t) \\
	"\size(\bW) \sim 1"
	\end{matrix} \right. \quad  \rightsquigarrow \quad  \left\{ \begin{matrix}
	\wt\bH(t)=\bH_0+\bV(\omega t;\omega) \\
	"\size(\bV) \sim |\omega|^{-1}"
	\end{matrix} \right. \ .
\end{equation}
This change of coordinates, called below Magnus normal form, is an extension to quasi-periodic systems of the one performed in  \cite{abanin1}.  Note that  $\bH_0$ is the same on both sides of \eqref{sss} provided $\int_{\T^\nu}\bW(\theta) \di \theta = 0$, which is fulfilled in our case thanks to Assumption {\bf (V2)}.

As we already mentioned, the price to pay  is that, in principle, it is not clear that the new perturbation is  sufficiently regularizing to 
fit in a standard KAM scheme  (see Remark \ref{rem:p} for a more detailed discussion).
 
Here it is essential to employ  pseudodifferential calculus, thanks to which we control the order (as a pseudodifferential operator) of the new perturbation, 
and prove that it is actually enough regular for the KAM iteration. 
This is true because the principal term of the new perturbation is a commutator with $\bH_0$   (see equation \eqref{eq:magnus_4}), and 
one can exploit the smoothing properties of the commutator of  pseudodifferential operators.
%it is   a bounded operator by the smoothing property of the commutator 

\paragraph*{Balanced Melnikov conditions.} 
After the Magnus normal form, we   perform a  KAM reducibility scheme in order to remove the time dependence on the coefficients of the equation.  As usual one needs  second order Melnikov conditions  on the unperturbed eigenvalues $\lambda_j = \sqrt{j^2 + \tm^2}$. 
One might impose that for some $\gamma, \tau >0$, 
\begin{equation}\label{eq:meln_0}
	\abs{\omega\cdot k + \lambda_j - \lambda_l} \geq \frac{\gamma}{\braket{k}^\tau}\frac{\braket{j- l}}{|\omega|} \ , \quad \forall (k, j, l) \in  \Z^\nu\times \N \times \N , \ 
\ 	(k,j,l) \neq (0,j, j)   \ ;
\end{equation}
such conditions are violated  for a set of  frequencies  of relative measure bounded by $C  \gamma$, where $C$ is a constant independent of $|\omega|$\footnote{remark that the conditions $\abs{\omega\cdot k + \lambda_j \pm \lambda_l} \geq \frac{\gamma}{\braket{k}^\tau}{\braket{j\pm l}}$ are violated  on a set of relative measure $\sim \gamma |\omega|$, which is as large as the size of the   frequency vector.}.\\
These   Melnikov conditions are  useless in our context; indeed recall that, after the Magnus normal form, the new perturbation has size $\sim |\omega|^{-1}$ while  the small denominators in \eqref{eq:meln_0} have size  $\sim |\omega|$; so  the two of them compensate each others, and the KAM step cannot reduce in size.\\
To overcome the problem, rather than  \eqref{eq:meln_0}, we  impose new  {\em balanced} Melnikov conditions, in which   we   balance the loss in size (in the denominator) and gain in regularity (in the numerator) in  \eqref{eq:meln_0}.
More precisely, we show that  for any $\alpha \in [0,1]$ one can impose 
\begin{equation}\label{eq:meln_intro}
	\abs{\omega\cdot k + \lambda_j - \lambda_l} \geq \frac{\gamma}{\braket{k}^\tau}\frac{\braket{j - l}^\alpha}{|\omega|^\alpha} \ , \quad \forall (k, j, l) \in  \Z^\nu\times \N \times \N , \ 
\ 	(k,j,l) \neq (0,j, j) 
\end{equation}
for a set of $\omega$'s in $R_\tM$ of large relative measure. This is proved in Section \ref{sub:um}.  By choosing $0 <\alpha < 1$,  the l.h.s. of \eqref{eq:meln_intro} is larger than the corresponding one in  \eqref{eq:meln_0}, and the KAM transformation reduces in size.
However note that the choice of $	\alpha$ will influence the regularizing effect given by $\la j\pm l \ra^\alpha$ in the r.h.s. of \eqref{eq:meln_intro}; ultimately, this  modifies the asymptotic expansion of the final eigenvalues, as one can see in 
\eqref{eq:eff_eig}.

\paragraph*{The KAM reducibility.} At this point we perform a  KAM reducibility scheme; this step is nowadays quite standard and we only sketch the proofs.
%We present first in Sections \ref{subsec:alg_red} and \ref{subsec:est_gen_kam} the general step of reduction with the homological equations that we have to solve, the proper estimates on their solutions and how the size of the new pertubation depends on the previous one. In Section \ref{sub:iter_kam} the whole iterative scheme is built in Proposition \ref{prop:iter_lemma} and closed in Corollaries \ref{cor:conv} and \ref{cor:iter_flow}.

%In order to focus on the main problem, which is to deal with perturbations  fast oscillating, 
% we decided to eliminate  some technical complications 
% (e.g. the use of periodic  boundary conditions of Sobolev  in time forcing) which will be addressed  elsewhere.

\vspace{2em}
\noindent{\bf Acknowledgments.} We thanks Dario Bambusi, Massimiliano Berti, Roberto Feola,  Matteo Gallone and Vieri Mastropietro for many stimulating discussions.
 We were partially supported by  Prin-2015KB9WPT and  Progetto  GNAMPA - INdAM 2018 ``Moti stabili ed instabili in equazioni di tipo Schr\"odinger''.

\section{Functional settings}\label{sec:fun}
Given  a set  $\Omega \subset \R^\nu$ and a Fréchet space $\cF$, the latter endowed with a system of seminorms $\{\norm \cdot _n  | \ n\in\N\}$, we define for a function $f: \Omega \ni \omega \mapsto f(\omega) \in \cF$ %and for every seminorm  $\norm{\cdot}_n$ of $\cF$
 the quantities 
\begin{equation}
\label{lip}
\abs{f}_{n, \Omega}^\infty:=
\sup_{\omega \in \Omega} \norm{f(\omega)}_n
\ , \qquad
\abs{f}_{n,\Omega}^{\lip} :=
\sup_{\omega_1, \omega_2 \in \Omega \atop \omega_1 \neq \omega_2 }
\frac{\norm{f(\omega_1)- f(\omega_2)}_n}{\abs{\omega_1 - \omega_2}} .
\end{equation}
Given 
 $\tw \in \R_+$,  we 
 denote by $\lip_\tw(\Omega, \cF)$ the space of functions from $\Omega$ into $\cF$  such that 
 \begin{equation}\label{eq:wlip_def}
	\norm f_{n,\Omega}^{\wlip{\tw}} := \abs f_{n,\Omega}^\infty + \tw \abs f _{n,\Omega}^{\lip} < \infty \ .
\end{equation}

%\begin{rem}\label{rem:alg_lip}
%For any $j \in \N$, there exist  $N \geq j$, $C >0$ such that
%	$\norm{fg}_{n,\Omega}^{\wlip{\tw}} 
%		% \abs{fg}_{j,\Omega}^\infty+\tw \abs{fg}_{j,\Omega}^\lip\\
%	%	& \leq  C \abs f_{N,\Omega}^\infty\abs g_{N,\Omega}^\infty + C \tw \left( \abs f_{N,\Omega}^\lip\abs g_{N,\Omega}^\infty+\abs f_{N,\Omega}^\infty\abs g_{N,\Omega}^\lip \right)\\
%%		& = \left( \abs f_{j,\Omega}^\infty+ \tw \abs f_{j,\Omega}^\lip \right)\abs g_{j,\Omega}^\infty +\abs f_{j,\Omega}^\infty \tw \abs g_{j,\Omega}^\lip\\
%	 \leq  C \norm f_{N,\Omega}^\wlip{\tw}\norm g_{N,\Omega}^\wlip{\tw} . 
%	$
%\end{rem}

\subsection{Pseudodifferential operators}\label{sub:pseudo}
The main tool for the construction of the Magnus transform in Section \ref{sec:magnus} is the calculus with pseudodifferential operators acting on the scale of the standard Sobolev spaces on the torus $\T:=\R / 2\pi\Z$, which is defined for any $r \in \R$ as 
\begin{equation}
	H^r(\T):=\Set{ \vf(x) = \sum_{j\in\Z}\vf_j e^{\im jx}, \ x\in \T |  \norm{\vf}_{H^r(\T)}^2:= \sum_{j\in\Z}\braket{j}^{2r}\abs{\vf_j}^2 <\infty  } .
\end{equation} 
For a function $f:\T\times \Z \to \R$, define the difference operator $\triangle f(x,j) := f(x,j+1)-f(x,j)$ and let $\Delta^\beta=\Delta\circ...\circ\Delta$ be the composition  $\beta$ times of $\Delta$. 
Then, we have the following:
\begin{defn}\label{defn:symbols_semi}
	We say that a function $f:  \T \times \Z \to \R$ is a symbol of order $m \in \R$ if for any $j \in \Z$ the map  $ x\mapsto f( x, j)$ is smooth and, furthermore, for any
	$ \alpha, \beta \in \N$, there exists $C_{\alpha, \beta}>0$ such that
	$$
	\abs{\partial_x^\alpha \triangle^\beta f( x, j)} \leq C_{\alpha, \beta} \, \la j \ra^{m - \beta}  \ , \quad \forall x \in \T   \ .
	$$
	If this is the case, we write $f \in S^m$. 
\end{defn}
We endow $S^m$  with the family of seminorms
$$
\wp^{m}_\ell(f) :=   
 \sum_{\alpha + \beta \leq \ell}
\\ \sup_{(x, j) \in \T \times  \Z} \la j \ra^{-m + \beta}
\left|\partial_x^\alpha \, \triangle^\beta f(x,j)\right| \ , \quad \ell \in \N_0 \ .
$$
\paragraph*{Analytic families of pseudodifferential operators.}
We will consider in our discussion also symbols depending real analytically on the variable $\theta \in \T^\nu$. To define them, we need to introduce the complex neighbourhood of the torus
$$
\T^\nu_\rho := \Set{a+\im b \in \C^\nu | a\in\T^\nu \, , \ \abs b \leq \rho  } \ .
$$
\begin{defn}\label{defn:symbols_semi2}
	Given $m \in \R$ and $\rho >0$,  a function $f: \T^\nu \times  \T \times \Z \to \R$, $(\theta, x, j) \mapsto f(\theta, x,j)$, is called a symbol of class $S^m_\rho$ if for any $j \in \N$ it is smooth in $x$, it  
	extends analytically in $\theta$ in $\T^\nu_\rho$
	and, furthermore, for every $\alpha, \beta \in \N$ there exists $C_{\alpha, \beta}>0$ such that 
	$$
	\abs{\partial_x^\alpha \triangle^\beta f(\theta, x, j)} \leq C_{\alpha, \beta} \, \la j \ra^{m - \beta}   \ \quad \forall x \in \T \ , \ \forall\,  \theta \in \C^\nu, \ \abs{{\rm Im} \,  \theta} \leq \rho \ .
	$$
	For such a function we write $f \in S^m_\rho$. 
\end{defn}
We endow the class $S^m_\rho$  with the family of seminorms
$$
\wp^{m, \rho}_\ell(f) := 
 \sup_{|{\rm Im } \, \theta| \leq \rho} \ \ 
 \sum_{\alpha + \beta \leq \ell}
\\ \sup_{(x, j) \in \T \times  \Z} \la j \ra^{-m + \beta}
\left|\partial_x^\alpha \, \triangle^\beta f(\theta, x,j)\right| \ , \quad \ell \in \N_0 \ .
$$
We associate to a symbol $f \in S^m_\rho$ the operator $f(\theta, x, D_x)$ by standard quantization
\begin{equation}
	\psi(x)=\sum_{j\in\Z}\psi_j e^{\im jx} \ \mapsto \ \left(f(\theta,x,D_x)\psi \right)(x):= \sum_{j\in\Z}f(\theta,x,j)\psi_j e^{\im jx} \ ;
\end{equation}
here $D_x=D:=\im^{-1}\partial_x$ is the Hörmander derivative.
\begin{defn}\label{def:pseudo}
We say that  $F\in \cA^m_\rho$ if it is a pseudodifferential operator
with symbol of class $S^m_\rho$, i.e. if there
exists a symbol $f \in S^m_\rho$ such that $F = f(\theta, x, D_x)$.\\
If $F$ does not depend on $\theta$, we simply write $F\in\cA^m$.
\end{defn}
\begin{rem}
For any $\sigma \in \R$, the operator $\la D \ra^\sigma \equiv  \left(1 - \partial_{xx}\right)^\frac{\sigma}{2}$ is  in $\cA^\sigma$.
\end{rem}
As usual we give to  $\cA^m_\rho$ a Fr\'echet structure by endowing it with the seminorms of the symbols. 
%\begin{rem}
%\label{rem:comp}
%It is standard that  $F \in \cA^m_\rho$, $G \in \cA^n_\rho$ implies that $F(\theta) \in \cL(H^r(\T), H^{r-m}(\T))$ for any $\theta\in\T^\nu$ and $r\in\R$,  $FG \in \cA^{m+n}_\rho$, $[F, G] \in \cA^{m+n-1}_\rho$. Moreover
%\begin{align}
%\label{est.1}
%\forall  r  \ \ \exists N=N(m,r) \ \text{s.t. }\  &\sup_{\theta\in\T^\nu}\norm{F(\theta)}_{\cL(H^r(\T), H^{r-m}(\T))} \leq C_1 \,
%\wp^{m,\rho}_N(F)  \ ,
%\\
%\label{est.2}
%\forall j \ \ \exists N=N(m,n,j) \ \text{s.t. }\  &\wp^{m+n,\rho}_j(FG) \leq C_2 \, \wp^{m,\rho}_N(F)
%\, \wp^{n,\rho}_N(G)  \ , 
%\\
%\label{est.3}
%\forall j \ \  \exists N=N(m,n,j) \ \text{s.t. } \   &\wp^{m+n-1, \rho}_j( [F, G] ) \leq C_3 \, \wp^{m, \rho}_N(F) \, \wp^{n, \rho}_N(G)  \ , 
%\end{align}
%for some positive
%constants $C_1(r,m)$, $C_2(m,n,j)$, $C_3(m,n,j)$.
%\end{rem}
Finally we define the class of pseudodifferential operators depending on a Lipschitz way on an external parameter.
\begin{defn}\label{def:pseudo_lip}
	We denote by $\lip_\tw(\Omega, \cA^m_\rho)$ the space of pseudodifferential operators whose symbols belong to $\lip_\tw(\Omega, S^m_\rho)$ and by $ \left( \wp_j^{n,\rho}(\cdot)_\Omega^\wlip{\tw} \right)_{j\in\N} $ the corresponding seminorms.
\end{defn}
\begin{rem}
\label{rem:comp}
Let $F \in \lip_\tw(\Omega,\cA_\rho^m)$ and $G \in \lip_\tw(\Omega,\cA_\rho^n)$. Then the symbolic calculus implies that
$FG \in  \lip_\tw(\Omega,\cA_\rho^{m+n})$ and 
$[F,G] \in  \lip_\tw(\Omega,\cA_\rho^{m+n-1})$, with the quantitative bounds
\begin{align*}
\forall j \ \ \exists N \ \text{s.t. } \  &  \wp_j^{m+n, \rho}(FG)_\Omega^\wlip{\tw} \leq C_1
\wp_N^{m, \rho}(F)_\Omega^\wlip{\tw} \, \wp_N^{n,\rho}(G)_\Omega^\wlip{\tw} \ , \\
\forall j \ \ \exists N \ \text{s.t. } \  &  \wp_j^{m+n-1, \rho}([F,G])_\Omega^\wlip{\tw} \leq C_2 
\wp_N^{m,\rho}(F)_\Omega^\wlip{\tw} \, \wp_N^{n,\rho}(G)_\Omega^\wlip{\tw} \ . 
\end{align*}  

\end{rem}

\paragraph*{Parity preserving operators.} The space $\cH^0$ of \eqref{eq:sine_sob} is naturally identified with the subspace of $H^0(\T) \equiv L^2(\T)$ of odd functions. Therefore it makes sense to work with pseudodifferential operators preserving the parity. Before describing them, we recall the orthogonal decomposition of the periodic $L^2$-functions on $\T$:
$$ L^2(\T) = L_{even}^2(\T)\oplus L_{odd}^2(\T)  $$ where, for $u(x)=\sum_{j\in\Z}u_je^{\im jx} \in L^2(\T)$, we have for any $j\in\Z$,
%\begin{equation}
%	\begin{split}
%	L_{even}^2[-\pi,\pi] &  :=  \Set{u(x)=\sum_{j\in\Z}u_je^{\im jx} \in L^2(\T)  | \ u_{-j}=u_j \ \forall j\in\Z  } \ ,\\
%	L_{odd}^2[-\pi,\pi] &  :=  \Set{u(x)=\sum_{j\in\Z}u_je^{\im jx} \in L^2(\T) | \ u_{-j}=-u_j \ \forall j\in\Z  } \ .
%	\end{split}
%\end{equation}
\begin{equation}
	u\in L_{even}^2(\T) \ \Leftrightarrow \ u_{-j}=u_j \quad \text{ and } \quad u\in L_{odd}^2(\T) \ \Leftrightarrow \ u_{-j}=-u_j . 
\end{equation}
\begin{defn}
	We denote by $\cP S_\rho^m$ the class of symbols $f\in S_\rho^m$ satisfying the property
	\begin{equation}\label{eq:pp_final}
	f(\theta,x,j)=f(\theta,-x,-j) \quad \ \forall \theta \in\T^\nu \, , \ x\in\T \, , \ j\in\Z \ .
	\end{equation}
	We denote by   $\cP\cA_\rho^m$ the subset of $\cA^m_\rho$ of parity preserving operators, that is, those operators $A\in\cA_\rho^m$ such that $A(L_{even}^2)\subseteq L_{even}^2$ and $A(L_{odd}^2)\subseteq L_{odd}^2$.
\end{defn}
\begin{lem}\label{lem:pp_class}
	Let $F\in\cA_\rho^m$ with symbol $f\in S_\rho^m$. Then $F\in\cP\cA_\rho^m$ if and only if $f\in\cP S_\rho^m$.
\end{lem}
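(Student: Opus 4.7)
\medskip

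\textbf{Plan.} The cleanest way to prove the lemma is to translate the parity-preservation condition into commutation with the reflection operator $\cR$ defined by $(\cR u)(x) := u(-x)$. The plan is to verify first that $\cR$ is an involution whose $\pm 1$-eigenspaces are precisely $L^2_{even}(\T)$ and $L^2_{odd}(\T)$, so that the associated spectral projections are $P_\pm = (\uno \pm \cR)/2$. Then $F \in \cP\cA^m_\rho$ is equivalent to $F P_\pm = P_\pm F P_\pm$, which in turn (by adding the two relations and comparing with $F = (P_+ + P_-) F (P_+ + P_-)$) is equivalent to $P_\pm F P_\mp = 0$, hence to $[F,\cR] = 0$. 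This reduces the lemma to the symbolic characterization of the commutation relation $F \cR = \cR F$.

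Next I would compute both sides of $F\cR = \cR F$ explicitly using the quantization formula. Testing on the Fourier basis $u = e^{\im k x}$, one has $(Fu)(x) = f(\theta,x,k) e^{\im k x}$ and $(\cR u)(x) = e^{-\im k x}$, so
\begin{equation*}
(\cR F u)(x) = f(\theta,-x,k)\, e^{-\im k x}, \qquad
(F \cR u)(x) = f(\theta,x,-k)\, e^{-\im k x}.
\end{equation*}
Hence $[F,\cR]=0$ is equivalent, for every $x \in \T$, $k \in \Z$ (and $\theta$ in the analyticity strip), to the identity $f(\theta,-x,k) = f(\theta,x,-k)$; replacing $k$ by $-k$ yields exactly the symbolic condition \eqref{eq:pp_final} defining $\cP S^m_\rho$.

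Assembling the two steps gives both implications at once. For the ``if'' direction, if $f \in \cP S^m_\rho$ then the computation above shows $F\cR u = \cR F u$ on the dense family $\{e^{\im k x}\}_{k \in \Z}$, hence on all of $L^2(\T)$ by continuity of $F$ and $\cR$ between appropriate Sobolev spaces; so $F$ commutes with $\cR$ and preserves parity. For the ``only if'' direction, parity preservation forces $[F,\cR]=0$, and testing the identity on $e^{\im k x}$ yields the symmetry $f(\theta,-x,-k) = f(\theta,x,k)$ pointwise in $(\theta,x,k)$, i.e.\ $f \in \cP S^m_\rho$.

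\textbf{Main obstacle.} The argument is essentially formal once the correct reformulation in terms of $\cR$ is in place; there is no hard estimate. The only point requiring a little care is justifying the equivalence between ``preserves both $L^2_{even}$ and $L^2_{odd}$'' and ``commutes with $\cR$'' (which is why I would spell it out through the projections $P_\pm$), and the handling of the sign change $k \leftrightarrow -k$ when passing from the commutation identity to the form \eqref{eq:pp_final}. No properties of $\rho$ or of the specific decay of $f$ beyond those built into the definition of $S^m_\rho$ are used.
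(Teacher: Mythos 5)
Your proof is correct, and it follows a genuinely different (and arguably cleaner) route than the paper's. The paper verifies the two invariance conditions $F(L^2_{odd})\subseteq L^2_{odd}$ and $F(L^2_{even})\subseteq L^2_{even}$ separately by a direct computation on Fourier coefficients, showing the first is equivalent to $\mathrm{Im}\bigl[(f(x,j)-f(-x,-j))e^{\im jx}\bigr]\equiv 0$ and the second to the vanishing of the corresponding real part, and then combines the two to obtain $f(x,j)=f(-x,-j)$. You instead encode parity preservation through the reflection $\cR u(x)=u(-x)$: since $\cR^2=\uno$ and $P_\pm=(\uno\pm\cR)/2$ project onto $L^2_{even}$, $L^2_{odd}$, the two invariance statements $P_\mp F P_\pm=0$ are bundled into the single identity $\cR F\cR=F$, and testing this on the dense family $\{e^{\im kx}\}$ yields $f(\theta,-x,k)=f(\theta,x,-k)$, i.e.\ exactly \eqref{eq:pp_final}. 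The commutator reformulation buys a symmetric, coordinate-free statement and avoids splitting into real and imaginary parts (so it would carry over verbatim to complex-valued symbols, e.g.\ for complexified $\theta$); the paper's argument is more bare-handed but arrives at the same place. One small point you handled implicitly and correctly: $P_-e^{\im\cdot 0\cdot x}=0$, so the condition $P_+FP_-=0$ is vacuous at $k=0$ and only the even-preservation condition constrains $f(\cdot,0)$; both routes account for this in the end because your identity is $[F,\cR]=0$, which captures both projections at once.
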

\begin{proof}
It is easy to check that $F(L^2_{odd}(\T)) \subseteq L^2_{odd}(\T)$ if and only if the symbol  $f(x, j)$ of $F$ fulfills 
${\rm Im}[(f(x,j)-f(-x,-j))e^{\im jx} ] \equiv  0 . $
Similarly $F(L^2_{even}(\T)) \subseteq L^2_{even}(\T)$ 
 if and only if  
 $ {\rm Re} [(f(x,j)-f(-x,-j))e^{\im jx}] \equiv  0 .
 $
 \end{proof}

\begin{rem}
\label{DPA}
For all $\sigma \in \R$, the operator $\la D \ra^{\sigma} \in \cP\cA^\sigma$, while, by the assumption (\b{V1}), $V \in \cP\cA_\rho^0$.
\end{rem}
\begin{rem}\label{rem:comp_pp}
Parity preserving operators are  closed under composition and commutators.
\end{rem}
\begin{rem}
\label{rem:B-1}
For $\tm = 0$ and $\sigma >0$, we define 
$B^{-\sigma}\psi := \sum_{j \neq 0} \frac{1}{|j|^\sigma} \psi_j e^{\im j x}$ for any $\psi \in L^2(\T)$; clearly $B^{-\sigma} \in \cP \cA^{-\sigma}$. 
Note that 
$B  B^{-1} \psi = B^{-1} B \psi = \psi - \psi_0$. However, the restriction $B\vert_{\cH^0}$ of $B$ to the  phase space \eqref{eq:sine_sob} is invertible (since the phase space contains only functions with zero average) and $B^{-1}$ is its inverse. 
\end{rem}

\subsection{Matrix representation and operator matrices}
\label{sec:mo}
For the KAM reducibility, a second and wider class of operators without a pseudodifferential structure is needed on the scale of Hilbert spaces $\left( \cH^r \right)_{r\in\R}$, as defined as in \eqref{eq:sine_sob}. Moreover, let $\cH^\infty:=\cap_{r\in\R}\cH^r$ and $\cH^{-\infty}:=\cup_{r\in\R}\cH^r$.
If $A$ is a linear operator, we denote by $A^*$ the adjoint of $A$ with respect to the scalar product of $\cH^0$,
%,  namely
%$$
%\la A^* \psi, \vf \ra = \la \psi, A \vf \ra , \quad \forall \psi, \vf \in \cH^\infty \ .
%$$
while we denote by $\bar A$ the conjugate operator:
$\bar A \psi := \bar{A\bar \psi}$ $\, \forall \psi \in D(A).$
\paragraph*{Matrix representation of operators.}
To any linear operator $A\colon \cH^\infty \to \cH^{-\infty}$  we  associate its matrix of coefficients $(A_m^n)_{m,n \in \N} $ on the basis $(\wh\be_n:=\sin(nx))_{n \in \N}$,  defined for $m,n\in\N$ as
$$
 {A}_m^n \equiv  \la A \wh\be_m , \wh\be_n \ra_{\cH^0}  \ .
$$
\begin{rem}
If $A$ is a bounded operator, the following implications hold:
\begin{align*}
& A  = A^* \Longleftrightarrow  A_m^n  = \bar{ A_n^m}
\ \ \  \forall m,n \in \N \ ; \\
& \bar A  = A^* \Longleftrightarrow  A_m^n =  A_n^m 
\  \ \ \forall m,n \in \N  \ .
\end{align*}
\end{rem}
A useful norm we can put on the space of such operators is in the following:
\begin{defn}\label{def:sdecay}
Given a linear operator $A\colon \cH^\infty \to \cH^{-\infty}$ and  $s \in \R$,  we say that $A$ has finite $s$-decay norm provided
\begin{equation}\label{eq:sdecay_norm}
\abs{A}_s:=\left( \sum_{h\in \N_0 }\la h \ra^{2s}\sup_{|m-n|=h}\abs{A_m^n}^2 \right)^{1/2} < \infty \ .
\end{equation}
\end{defn}
One has the following:
\begin{lem}[Algebra of the s-decay]\label{rem:alg}
For any $s > \frac{1}{2}$ there is a constant $C_s>0$ such that
\begin{equation}\label{eq:algebra_sdecay}
	\abs{AB}_s\leq C_s \abs A_s\abs B_s  .
\end{equation} 
\end{lem}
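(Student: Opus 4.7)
The plan is to reduce the matrix bound to a one–dimensional convolution estimate on $\Z$, and then exploit the well-known fact that the weighted space $\ell^2_s(\Z)$ is a convolution algebra when $s>\tfrac12$.

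First I would set $a_h := \sup_{|m-n|=h}|A_m^n|$ and $b_h := \sup_{|m-n|=h}|B_m^n|$ for $h \in \N_0$, so that $|A|_s^2 = \sum_{h\ge 0}\la h\ra^{2s} a_h^2$ and similarly for $B$. Extend these to sequences on $\Z$ by $\tilde a_k := a_{|k|}$, $\tilde b_k := b_{|k|}$; then $\|\tilde a\|_{\ell^2_s(\Z)}^2 \le 2|A|_s^2$ and likewise for $\tilde b$. The elementary bound $|A_m^k|\le a_{|m-k|}$, $|B_k^n|\le b_{|k-n|}$ gives, by the matrix product formula,
\begin{equation*}
|(AB)_m^n|\;\le\;\sum_{k\in\N}|A_m^k||B_k^n|\;\le\;\sum_{k\in\Z}\tilde a_{m-k}\tilde b_{k-n}\;=\;(\tilde a*\tilde b)_{m-n}.
\end{equation*}

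Next, using $\sup_{|j|=h}|\cdot|^2 \le \sum_{|j|=h}|\cdot|^2$, I would chain this into
\begin{equation*}
|AB|_s^2 \;\le\; \sum_{h\ge 0}\la h\ra^{2s}\sup_{|j|=h}|(\tilde a*\tilde b)_j|^2\;\le\;\sum_{j\in\Z}\la j\ra^{2s}|(\tilde a*\tilde b)_j|^2\;=\;\|\tilde a*\tilde b\|_{\ell^2_s(\Z)}^2.
\end{equation*}
The final step is the standard Banach algebra property: for $s>\tfrac12$ one has $\|\tilde a*\tilde b\|_{\ell^2_s(\Z)}\le C_s\|\tilde a\|_{\ell^2_s(\Z)}\|\tilde b\|_{\ell^2_s(\Z)}$. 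This follows from Peetre's inequality $\la j\ra^s\lesssim \la j-k\ra^s+\la k\ra^s$ combined with Cauchy--Schwarz, since the summability of $\la k\ra^{-2s}$ on $\Z$ exactly requires $s>\tfrac12$.

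Combining these three ingredients yields $|AB|_s \le C_s|A|_s|B|_s$. There is no real obstacle here: the only subtlety is the use of $s>\tfrac12$ at the final convolution step, which is classical, and the factor $2$ arising from symmetrizing the diagonals to $\Z$, both of which are absorbed into the constant $C_s$.
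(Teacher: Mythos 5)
Your proof is correct, and it takes a more self-contained route than the paper's. The paper (Appendix~\ref{alg.s.decay}) argues by extending $A$ to a parity-preserving operator $A_e$ on $L^2(\T)$ which vanishes on $L^2_{even}$, verifies the identity $\la A_e e^{\im m x}, e^{\im n x}\ra_{L^2(\T)} = 2\la A\sin(mx),\sin(nx)\ra_{\cH^0}$ so that the sine-basis $s$-decay norm agrees (up to a factor $2$) with the standard one on the exponential basis, and then cites the known algebra property of that norm from \cite{bebo13} without giving details. You instead stay entirely on the sine basis, dominate $|(AB)_m^n|$ by the convolution $(\tilde a*\tilde b)_{m-n}$ of the envelope sequences on $\Z$, and prove the Banach-algebra property of $\ell^2_s(\Z)$ directly via Peetre's inequality and Cauchy--Schwarz — exactly the ingredient the cited reference would supply. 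The two routes thus coincide mathematically at the core convolution estimate, but yours is a complete, reference-free argument, while the paper's reduction buys a unified treatment of sine- and exponential-basis $s$-decay norms that is reused implicitly throughout the article (e.g.\ in Proposition~\ref{prop:optoma}). One small point worth making explicit in your write-up: since $\tilde a$ and $\tilde b$ are even, $\tilde a*\tilde b$ is even too, so $\sup_{|j|=h}|(\tilde a*\tilde b)_j|=|(\tilde a*\tilde b)_h|$; this is cleaner than invoking $\sup\le\sum$ over the two-element set $\{\pm h\}$, though both give the same bound. The index convention $(AB)_m^n=\sum_k B_m^k A_k^n$ (rather than $\sum_k A_m^k B_k^n$) is also worth noting, although it is immaterial for the estimate since only $|m-k|$ and $|k-n|$ enter.
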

\noindent 
The proof of the Lemma is an easy variant of the one in 
\cite{bebo13}
 we sketch it in  Appendix \ref{alg.s.decay}.
\begin{rem}\label{rem:opnorm_vs_sdecay}
If $A: \cH^\infty \to \cH^{-\infty}$ has finite $s$-decay norm with $s > \frac{1}{2}$, then for any $r\in[0,s]$, 
$A$ extends to a bounded operator $\cH^{r} \to \cH^{r}$.  Moreover, by tame estimates, one has the quantitative bound
$ \norm{A}_{\cL(\cH^r)} \leq C_{r,s} |A|_s$. 
\end{rem}

Next, we consider operators depending analytically on angles $\theta \in \T^\nu$.
\begin{defn}
\label{def:mat}
Let $A$ be a $\theta$-depending  operator, $A \colon \T^\nu \to \cL(\cH^\infty, \cH^{-\infty})$. 
Given  $ s \geq 0$ and $\rho>0$, we say that 
$A \in \cM_{ \rho, s}$ if  one has 
	\begin{equation}
	\label{norm:rs}
		\abs A_{\rho,s}:=\sum_{k \in \Z^{\nu}}e^{\rho\abs k}\abs{\wh A(k)}_s < \infty \ , \quad \mbox{where } \quad \wh A(k) := \frac{1}{(2\pi)^\nu} \int_{\T^\nu} A(\theta) \, e^{- \im k \cdot \theta } \, \di \theta \ .
	\end{equation}
\end{defn}

\begin{rem} 
\label{rem:coef.mat}
If $A$ is a $\theta$-depending bounded operator, the following implications hold:
\begin{align*}
& A  = A^* \Longleftrightarrow [\wh A(k)]^* = \wh A(-k)  \ \forall k \in \Z^\nu 
\Longleftrightarrow \wh A_m^n(k) = \bar{\wh A_n^m(-k)}
\ \forall k \in \Z^\nu , \ \forall m,n \in \N \\
& \bar A  = A^* \Longleftrightarrow [\wh A(k)]^* = \bar{\wh A(k)}  \ \forall k \in \Z^\nu 
\Longleftrightarrow \wh A_m^n(k) = {\wh A_n^m(k)}
\ \forall k \in \Z^\nu , \ \forall m,n \in \N
\end{align*}
\end{rem}
If $\Omega \ni \omega \mapsto A(\omega)\in\cM_{\rho, s}$ is a Lipschitz map,  we write $A \in \lip_\tw(\Omega, \cM_{\rho,s})$, provided 
\begin{equation}
\label{lip.msr}
\abs{A}_{\rho, s,\Omega}^{\wlip{\tw}}  :=
\sup_{\omega \in \Omega} \abs{A(\omega)}_{\rho, s}
+\tw \sup_{\omega_1 \neq \omega_2 \in \Omega} \frac{\abs{A(\omega_1) - A(\omega_2)}_{\rho, s}}{\abs{\omega_1 - \omega_2}}	 < \infty \ .
\end{equation}
\begin{rem}
\label{rem:ana}
For any $s > \frac{1}{2}$ and $\rho>0$, the spaces $\cM_{\rho, s}$ and
$\lip_\tw(\Omega, \cM_{\rho,s})$ 
 are closed with respect to composition, with  
$$
\abs{AB}_{\rho, s} \leq C_s \abs{A}_{\rho, s} \, \abs{B}_{\rho, s} , 
\qquad
\abs{AB}_{\rho, s, \Omega}^{\wlip{\tw}} \leq C_{s} \abs{A}_{\rho, s, \Omega}^{\wlip{\tw}} \, \abs{B}_{\rho, s, \Omega}^{\wlip{\tw}}  .
$$
This  follows from Lemma \ref{rem:alg}  and the algebra properties for analytic functions.
\end{rem}

\paragraph{Operator matrices.} We are going to meet matrices of operators of the form 
\begin{equation}
\label{eq:Amatrix}
\bA=\left( \begin{matrix}
A^{d} & A^o \\ -\bar{A^o} & -\bar{A^d} 
\end{matrix} \right) \ , 
\end{equation}
where $A^d$ and $A^o$ are linear  operators belonging to the class $\cM_{\rho, s}$.
Actually, the operator $A^d$ on the diagonal will have different decay properties than the element on the anti-diagonal $A^o$. Therefore, we introduce 
classes of operator matrices in which we keep track of these differences. 
\begin{defn}
	Given an operator matrix $\bA$ of the form
	\eqref{eq:Amatrix}, $\alpha, \beta \in \R$, $\rho>0$,$s \geq 0$, we say that 
	$A $ belongs to $\cM_{\rho,s}(\alpha, \beta)$ if
	\begin{equation}\label{struttura}
		[A^d]^* = A^d \ , \qquad [A^o]^* = \bar{A^o}
	\end{equation}	
	and one also has 
	\begin{align}\label{M1}
		&	\la D \ra^\alpha \,  A^d  \ , \ A^d \, \la D \ra^{\alpha}  \in \cM_{\rho,s}  \ , \\
		\label{M2}
		&	\la D \ra^\beta \,  A^o  \ , \ A^o \, \la D \ra^{\beta}  \in \cM_{\rho, s}  \ , \\
		\label{M3}
		&	\la D \ra^\sigma \,  A^\delta \, \la D \ra^{- \sigma}  \in \cM_{\rho, s} \ , \quad \forall \sigma \in \{ \pm \alpha, \pm \beta, 0 \} \ , \ \ \forall \delta \in \{d, o\} \ .
	\end{align}
	We endow $\cM_{\rho,s}(\alpha, \beta)$  with the  norm 
	\begin{equation}\label{eq:sdecay_matrix_norm}	
		\begin{aligned}
		\abs{\bA}_{\rho, s}^{\alpha, \beta} := & 
		\abs{\braket{D}^{\alpha}A^d}_{\rho,s}
		+\abs{A^d\braket{D}^{\alpha}}_{\rho, s} 
		+\abs{\braket{D}^{\beta}A^o}_{\rho,s}
		+\abs{A^o\braket{D}^{\beta}}_{\rho, s} \\	
		& 
		+ \sum_{\sigma\in\{ \pm \alpha, \pm \beta,  0 \} \atop \delta \in \{d, o\}} \abs{\braket{D}^{\sigma}A^\delta \braket{D}^{-\sigma}}_{\rho,s} \ ,
		\end{aligned}
	\end{equation}
	with the convention that, in case of repetition (when $\alpha=\beta$, $\alpha=0$ or $\beta=0$), the same terms are not summed twice.
	When $\bA$ is independent of $\theta\in\T^\nu$, we use the norm  $\abs\bA_s^{\alpha,\beta}$, defined as \eqref{eq:sdecay_matrix_norm}, but replacing $\abs{\cdot }_{\rho, s}$ with the $s$-decay norm $\abs{\cdot}_s$ defined in \eqref{eq:sdecay_norm}.
\end{defn}
Let us motivate the properties describing the class $\cM_{\rho, s}(\alpha, \beta)$:
\begin{itemize}
	\item Condition \eqref{struttura} is equivalent to ask that $\bA$ is the Hamiltonian vector field of a real valued quadratic Hamiltonian, see e.g. \cite{mont17} for a discussion;
	\item Conditions \eqref{M1} and \eqref{M2} control the decay properties for the coefficient of the coefficients of the matrices associated to $A^d$ and $A^o$: indeed the matrix coefficients of $\la D \ra^{\alpha} A \,  \la D\ra^{\beta} $ are given by
	$$
	\left[ \wh{\la D \ra^{\alpha} A \,  \la D\ra^{\beta}} \right]_{m}^n(k)   = \la m \ra^\alpha \, \wh{A}_m^n(k) \, \la n \ra^\beta \ , 
	$$
	therefore decay (or growth) properties for the matrix coefficients of the operator $A$ are implied by the boundedness of the norms $| \cdot |_{\rho,s}$;
	\item Condition \eqref{M3} is just for  simplifying some   computations below.
\end{itemize}

%
%
%
%
%
%\begin{defn}
%	Given a matrix operator $\bA$ of the form
%	\eqref{eq:Amatrix}, 
%	we will say that it is $(\frac{\alpha}{2},0)$-smoothing on the scale $(\cH^r)_r$
%	if the quantity 
%	\begin{equation}\label{eq:sdecay_matrix_norm}	
%	\begin{split}
%	\abs{\bA} &:=\sum_{\sigma\in\{ \pm \tfrac{\alpha}{2}, 0 \}}\left( \abs{\braket{D}^{\sigma}A^d\braket{D}^{-\sigma}}_s+\abs{\braket{D}^{\sigma}A^o\braket{D}^{-\sigma}}_s \right)\\
%	& +\abs{\braket{D}^{\tfrac{\alpha}{2}}A^d}_s+\abs{A^d\braket{D}^{\tfrac{\alpha}{2}}}_s < \infty \ .
%	\end{split}
%	\end{equation}
%	We will denote by $\cM(\frac{\alpha}{2},0)$ the set of matrix operators for which \eqref{eq:sdecay_matrix_norm} holds.
%\end{defn}
%The boundedness of $\abs\bA$ implies that on the diagonal $A^d$ regularizes the action on the scale of $\frac{\alpha}{2}$ more, while the operator $A^o$ on the anti-diagonal maps at least $\cH^r$ into itself. 
%\begin{defn}
%	Let $\bA\in\cM(\frac{\alpha}{2},0)$ such that $\bA=\bA(\theta)$, $\theta\in\T^{\nu}$, with Fourier expansion $\bA(\theta)=\sum_{k \in \Z^{\nu}}\wh\bA(k)e^{\im k\cdot \theta}$. Then, for any $\rho>0$, we say that $\bA$ belong to $\cM_{\rho}(\frac{\alpha}{2},0)$ if the analytic norm defined by
%	\begin{equation}
%		\abs\bA_{\rho}:=\sum_{k \in \Z^{\nu}}e^{\rho\abs k}\abs{\wh\bA(k)}
%	\end{equation}
%	is finite. Similarly, for $A\in\cC^{\infty}(\T^{\nu},(\cH^r)_{r\in\R})$, we define:
%	\begin{equation}
%		\abs A_{\rho,s}:=\sum_{k \in \Z^{\nu}}e^{\rho\abs k}\abs{\wh A(k)}_s
%	\end{equation}
%\end{defn}
\begin{rem}
	Let $0<\rho'\leq \rho$, $0 \leq s' \leq s $ $\alpha\geq \alpha'$, $\beta\geq \beta'$. 
	Then $\cM_{\rho,s}(\alpha, \beta)\subseteq \cM_{\rho',s'}(\alpha', \beta')$ with the quantitative bound
	$\abs{\bA}_{\rho', s'}^{\alpha', \beta'} \leq \abs{\bA}_{\rho, s}^{\alpha, \beta}$.
\end{rem}
%\begin{rem}
%	By Definition \ref{def:mat}, the norms in \eqref{eq:sdecay_matrix_norm} admit the following more operative reformulation:
%	\begin{equation}\label{eq:sd_an_easy}
%		\abs\bA_{\rho,s}^{\alpha,\beta}=\sum_{k \in \Z^{\nu}}e^{\rho\abs k}\abs{\wh\bA(k)}_s^{\alpha,\beta} \ , \quad \wh\bA(k):=\frac{1}{(2\pi)^\nu}\int_{\T^{\nu}}\bA(\theta) \, e^{-\im\theta\cdot k}\wrt\theta \ .
%	\end{equation}
%\end{rem}

Finally, if	$A^d(\omega)$ and $A^o(\omega)$ depend in a Lipschitz way on a parameter $\omega$, we introduce the Lipschitz norm
\begin{equation}\label{eq:lip_ab_def}
	\abs{\bA}_{\rho,s, \alpha, \beta,\Omega}^{\wlip{\tw}}  :=
	\sup_{\omega \in \Omega} \abs{\bA(\omega)}_{\rho, s}^{\alpha, \beta}
	+\tw \sup_{\omega_1 \neq \omega_2 \in\Omega}
	 \frac{\abs{\bA(\omega_1)- \bA(\omega_2)}_{\rho, s}^{\alpha,\beta}}{\abs{\omega_1 - \omega_2}}	\ .
\end{equation}
If such a norm is  finite, we write $\bA \in \lip_\tw(\Omega, \cM_{\rho,s}(\alpha, \beta))$.
\paragraph*{Embedding of parity preserving pseudodifferential operators.}
The introduction of the classes $\cM_{\rho,s}(\alpha, \beta)$ is due to the fact that they are closed with respect the  KAM reducibility scheme, for a proper choice of $\alpha$ and $\beta$.
In the next lemma we show how parity preserving pseudodifferential operators embed in such classes.
\begin{lem}[Embedding]\label{lem:emb}
	Given $\alpha, \beta, \rho >0$, consider 
	$F \in \cP\cA^{-\alpha}_\rho$ and $G \in \cP\cA^{-\beta}_\rho$. Assume that
	$$
	F^* = F \ , \qquad G^* = \bar G \ ,
	$$
	(where the adjoint is with respect to the scalar product of $\cH^0$). Define the operator matrix
	\begin{equation}\label{mat:emb}
		\bA :=\left( \begin{matrix}
		F & G \\ -\bar G & -\bar{F} 
		\end{matrix} \right) \ . 
	\end{equation}
	Then, for any $s \geq  0$ and $0<\rho' < \rho$, one has $\bA \in \cM_{\rho', s}(\alpha,  \beta)$. Moreover, there exist $C,c>0$ such that 
	\begin{equation}\label{eq:emb}
		\abs{\bA}_{\rho', s}^{\alpha, \beta} \leq \frac{C}{(\rho-\rho')^\nu}\left( \wp_{s+c}^{-\alpha,\rho}(F)+\wp_{s+c}^{-\beta,\rho}(G) \right) .
	\end{equation}
	Finally, if $F \in \lip_\tw(\Omega, \cP\cA_\rho^{- \alpha})$, $G \in \lip_\tw(\Omega,  \cP\cA_\rho^{-\beta})$, one has 
	$\bA \in \lip_\tw(\Omega, \cM_{\rho',s}(\alpha, \beta))$ and \eqref{eq:emb} holds with the corresponding  weighted Lipschitz norms.
\end{lem}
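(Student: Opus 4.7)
The structural conditions \eqref{struttura} are immediate from the assumptions: $[A^d]^* = F^* = F = A^d$ and $[A^o]^* = G^* = \bar G = \bar{A^o}$. The content of the lemma is therefore in \eqref{M1}, \eqref{M2}, \eqref{M3} together with the quantitative bound. The plan is to reduce these three conditions to a single ``core lemma'' about parity preserving pseudodifferential operators of order zero, and then to invoke the symbolic calculus to assemble the final estimate.

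By Remark \ref{rem:comp} (composition of pseudodifferential operators) and Remark \ref{rem:comp_pp} (closure under composition of parity preserving operators), each of the operators appearing in \eqref{M1}, \eqref{M2}, \eqref{M3} belongs to $\cP\cA^0_\rho$ (indeed $\la D\ra^\alpha F, F\la D\ra^\alpha \in \cP\cA^{0}_\rho$; and the conjugations in \eqref{M3} are of order at most $0$, since $\la D\ra^{-\alpha}$ is of non positive order). The corresponding seminorms are controlled by $\wp^{-\alpha,\rho}_{s+c}(F)$ or $\wp^{-\beta,\rho}_{s+c}(G)$ for a constant $c$ depending on $s$. It therefore suffices to establish the core estimate
\begin{equation*}
H\in\cP\cA^0_\rho \ , \ 0 < \rho' < \rho \ , \ s \geq 0 \quad \Longrightarrow \quad H\in\cM_{\rho',s} \ , \ \abs{H}_{\rho',s} \leq \frac{C_s}{(\rho-\rho')^\nu}\,\wp^{0,\rho}_{s+c}(h) \ ,
\end{equation*}
where $h\in\cP S^0_\rho$ is the symbol of $H$. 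Summing this bound over the finitely many terms in the definition \eqref{eq:sdecay_matrix_norm} will then produce \eqref{eq:emb}.

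To prove the core estimate, I compute the matrix coefficients of $\wh H(k)$ (the $k$-th Fourier coefficient in $\theta$) on the sine basis $\wh\be_n(x)=\sin(nx)$. Using $\sin(nx) = \tfrac{1}{2\im}(e^{\im nx}-e^{-\im nx})$ and the Fourier expansion $h(\theta,x,j) = \sum_{k,p} \check h(k,p,j) e^{\im k\cdot\theta + \im p x}$, a direct calculation shows that $\wh H_m^n(k)$ is a finite linear combination of the quantities $\check h(k, \pm m \pm n, \pm m)$. Two ingredients now give the required decay: \emph{(i)} analyticity of $h$ in $\theta$ on $\T^\nu_\rho$ yields $|\check h(k,p,j)| \lesssim e^{-\rho|k|}\,\wp^{0,\rho}_0(h)$ in the usual way by shifting the integration contour; \emph{(ii)} smoothness in $x$ combined with integration by parts $s+1$ times gives $|\check h(k,p,j)| \lesssim \la p\ra^{-(s+1)}\la j\ra^{0}\,e^{-\rho|k|}\,\wp^{0,\rho}_{s+1}(h)$. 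Since every occurring $p$ satisfies $|p|\geq|m-n|$, I conclude
$$
|\wh H_m^n(k)| \leq C\, \wp^{0,\rho}_{s+1}(h)\, \frac{e^{-\rho|k|}}{\la m-n\ra^{s+1}}\,.
$$
Plugging this into Definition \ref{def:sdecay} gives $|\wh H(k)|_s \leq C_s\, \wp^{0,\rho}_{s+1}(h)\, e^{-\rho|k|}$ (the sum over $h = |m - n|$ of $\la h\ra^{2s}/\la h\ra^{2(s+1)}$ converges), and summing over $k$ with the exponential weight $e^{\rho'|k|}$ produces the geometric factor $\sum_k e^{-(\rho-\rho')|k|} \lesssim (\rho-\rho')^{-\nu}$, which completes the core estimate.

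Finally, for the Lipschitz version I simply observe that the above bounds are linear in the seminorms of the symbols; applied to the difference $F(\omega_1) - F(\omega_2)$ and $G(\omega_1) - G(\omega_2)$ (and dividing by $|\omega_1 - \omega_2|$), the same argument transfers to the weighted Lipschitz norms defined in \eqref{eq:lip_ab_def}. The main technical point to be careful about is step \emph{(ii)}: one must check that the matrix elements on the \emph{sine} basis, rather than on the exponential basis of $L^2(\T)$, really only see $|m-n|$ (not $m+n$) in the denominator after all cancellations; this is where the parity preserving hypothesis is used, since it guarantees that the apparent contributions from $p = \pm(m+n)$ can be dominated by $\la m-n\ra^{-(s+1)}$ using $|m+n|\geq|m-n|$.
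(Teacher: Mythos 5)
Your proof is correct and follows essentially the same route as the paper: the paper factors the argument through Proposition \ref{prop:optoma}, which is precisely your ``core lemma'' proved the same way --- compute $\wh H_m^n(k)$ on the sine basis, integrate by parts in $x$ to gain $\la m-n\ra^{-(s+1)}$, shift the $\theta$-contour to gain $e^{-\rho|k|}$, then sum over $k$ with weight $e^{\rho'|k|}$ to produce the $(\rho-\rho')^{-\nu}$ factor. Reducing the cases \eqref{M1}--\eqref{M3} to an order-zero operator via the symbolic calculus of Remarks \ref{rem:comp} and \ref{rem:comp_pp}, then summing over the finitely many terms in \eqref{eq:sdecay_matrix_norm}, is exactly what the paper does implicitly. (A small cosmetic point: the number of integrations by parts needs to be a fixed integer $>s+\tfrac{1}{2}$; the paper takes $\lfloor s+2\rfloor+1$, so your ``$s+1$'' should be read as $\lceil s\rceil+1$ or similar.)

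One remark on your closing paragraph: you attribute to the parity-preserving hypothesis the fact that the apparent $|m+n|^{-(s+1)}$ contributions can be absorbed into $\la m-n\ra^{-(s+1)}$. That absorption is automatic for $m,n\ge 1$, since $|m+n|\ge|m-n|$ always --- no parity is needed there, and no cancellation actually occurs (the paper's proof of Proposition \ref{prop:optoma} simply bounds both terms by $|m-n|^{-\wt s}$). Where parity is genuinely used is elsewhere: first, it ensures $F$ and $G$ map $L^2_{odd}\simeq\cH^0$ into itself, so that the sine-basis matrix $(\la F\sin(mx),\sin(nx)\ra)_{m,n}$ faithfully represents $F$ as an operator on the scale $\cH^r$, rather than representing $\Pi_{odd}F\Pi_{odd}$; second, it allows the simplification $\wh F_m^n(k)=\tfrac{1}{4(2\pi)^\nu}\int f(\theta,x,m)(e^{\im(m-n)x}-e^{\im(m+n)x})e^{-\im k\cdot\theta}$, in which the symbol appears only at $j=m$ rather than at both $\pm m$. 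Neither point affects the validity of your estimate, but the explanation of the role of parity should be adjusted accordingly.
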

The proof is available in Appendix \ref{app:tr}.

\paragraph*{Commutators and flows.}
These classes of matrices enjoy also closure properties under commutators and flow generation.
We define the adjoint operator
\begin{equation}
\label{adj}
\ad_\bX(\bV):=\im [\bX,  \bV]  \ ;
\end{equation}
note the multiplication by the imaginary unit in the definition of the adjoint map.
\begin{lem}[Commutator]
\label{lem:com}
Let $\alpha, \rho >0$ and $s > \frac{1}{2}$. 	Assume $\bV \in \cM_{\rho,s}(\alpha,0)$ and $\bX \in \cM_{\rho,s}(\alpha, \alpha)$.  Then $\ad_\bX(\bV)$  
	 belongs to $\cM_{\rho,s}(\alpha, \alpha)$ with the quantitative bound
\begin{equation}
	\label{s.est}
		\Big|\, \ad_\bX(\bV) \Big|_{\rho, s}^{\alpha, \alpha}    \leq 2 \,C_s  \abs{\bX}_{\rho, s}^{\alpha, \alpha} \, \abs{\bV}_{\rho, s}^{\alpha, 0}  \ ;
	\end{equation}
	here $C_s$ is the algebra constant of \eqref{eq:sdecay_norm}.
	Moreover, if $\bV \in \lip_\tw(\Omega,\cM_{ \rho, s}(\alpha,0))$ and $\bX\in\lip_\tw(\Omega,\cM_{ \rho, s}(\alpha,\alpha))$, then $\ad_\bX(\bV)\in\lip_\tw(\Omega,\cM_{ \rho, s}(\alpha,\alpha))$, with 
	\begin{equation}\label{eq:ad_lip_alg}
		\abs{\ad_\bX(\bV)}_{\rho,s,\alpha,\alpha,\Omega}^\wlip{\tw}\leq 2\,C_s  \abs \bX_{\rho, s,\alpha,\alpha,\Omega}^\wlip{\tw}\abs\bV_{\rho, s,\alpha,0,\Omega}^\wlip{\tw} \ .
	\end{equation}
\end{lem}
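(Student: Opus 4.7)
The plan is to compute $\ad_\bX(\bV) = \im(\bX\bV - \bV\bX)$ in block form, verify that it preserves the Hamiltonian structure \eqref{struttura}, and then estimate each of the pieces appearing in the norm \eqref{eq:sdecay_matrix_norm} by inserting resolutions $\la D \ra^{-\alpha}\la D \ra^{\alpha}$ to redistribute derivatives between the two factors, invoking the algebra property of $|\cdot|_{\rho,s}$ from Remark \ref{rem:ana}.

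A direct block multiplication yields
\begin{align*}
(\ad_\bX\bV)^d &= \im\bigl([X^d,V^d] - X^o \bar{V^o} + V^o \bar{X^o}\bigr), \\
(\ad_\bX\bV)^o &= \im\bigl(X^d V^o - V^d X^o - X^o \bar{V^d} + V^o \bar{X^d}\bigr).
\end{align*}
Combining $(AB)^* = B^*A^*$, $(\bar A)^* = \overline{A^*}$, the adjoint relations \eqref{struttura} for $\bX$ and $\bV$, and the factor $\im$ in \eqref{adj}, a short calculation shows $[(\ad_\bX\bV)^d]^* = (\ad_\bX\bV)^d$ and $[(\ad_\bX\bV)^o]^* = \overline{(\ad_\bX\bV)^o}$, so $\ad_\bX(\bV)$ is again of the form \eqref{eq:Amatrix}--\eqref{struttura}.

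Next I estimate each of the norms bundled in \eqref{eq:sdecay_matrix_norm}. The governing principle is that $\bV \in \cM_{\rho,s}(\alpha, 0)$ provides no derivative gain on the entries $V^o, \bar{V^o}$, so whenever a $\la D \ra^\alpha$ sits adjacent to one of those factors I route it through the neighbouring $\bX$-factor, which always admits an $\alpha$-gain. Two representative expansions are
\begin{align*}
\la D \ra^\alpha \bigl(X^d V^d\bigr) &= \bigl(\la D \ra^\alpha X^d \la D \ra^{-\alpha}\bigr)\, \bigl(\la D \ra^\alpha V^d\bigr), \\
\la D \ra^\alpha \bigl(V^o \bar{X^o}\bigr) &= \bigl(\la D \ra^\alpha V^o \la D \ra^{-\alpha}\bigr)\, \bigl(\la D \ra^\alpha \bar{X^o}\bigr);
\end{align*}
the remaining six summands in the commutator, the variants with $\la D\ra^\alpha$ on the right, and each M3-type conjugation norm $\la D\ra^{\sigma}(\cdot)\la D\ra^{-\sigma}$ with $\sigma \in \{\pm \alpha, 0\}$ are handled by the same insertion. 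In every case the first factor lies in $\cM_{\rho,s}$ with $|\cdot|_{\rho,s}$ controlled by one of $|\bX|_{\rho,s}^{\alpha,\alpha}$ or $|\bV|_{\rho,s}^{\alpha,0}$ through one of \eqref{M1}--\eqref{M3}, and the second factor by the other; Remark \ref{rem:ana} then delivers a $C_s$-bound on the product. Collecting all contributions, the two summands $\bX\bV$ and $\bV\bX$ produce the overall factor $2$, yielding \eqref{s.est}.

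The Lipschitz estimate \eqref{eq:ad_lip_alg} follows from the telescoping identity
\[
\ad_{\bX(\omega_1)}\bV(\omega_1) - \ad_{\bX(\omega_2)}\bV(\omega_2) = \ad_{\bX(\omega_1)-\bX(\omega_2)}\bV(\omega_1) + \ad_{\bX(\omega_2)}\bigl(\bV(\omega_1)-\bV(\omega_2)\bigr),
\]
after applying the sup estimate just proved to each summand and invoking the standard splitting of the weighted Lipschitz norm \eqref{eq:wlip_def}. The main technical obstacle is pure bookkeeping: the norm \eqref{eq:sdecay_matrix_norm} bundles many derivative-weighted pieces, and in each one must check that the insertion trick leaves exactly one $\la D\ra^{\alpha}$-factor with $\bX$ while the remaining derivatives land inside a norm that $\bV \in \cM_{\rho,s}(\alpha, 0)$ actually controls -- it is precisely the asymmetry between $\bV$ and $\bX$ that forces the routing of the gain to go consistently through the more regular operator $\bX$.
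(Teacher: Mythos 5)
Your proposal is correct and follows essentially the same route as the paper's own proof in Appendix A.4: you compute the block decomposition of $\ad_\bX(\bV)$ (your $Z^d, Z^o$ expressions match the paper's), verify the symmetry conditions \eqref{struttura} using the adjoint relations, and then bound each summand of every piece in the norm \eqref{eq:sdecay_matrix_norm} by inserting $\la D\ra^{\pm\alpha}$ and invoking the algebra property of Remark \ref{rem:ana}, which is exactly the paper's strategy (including its terseness about the combinatorial bookkeeping behind the precise factor $2 C_s$, and the one-line appeal to the telescoping identity for the Lipschitz part).
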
 
Also the proof of this lemma is postponed to Appendix \ref{app:tr}.
\begin{lem}[Flow]\label{lem:flow}
	Let $\alpha, \rho > 0$, $ s> \frac{1}{2}$. Assume $\bV \in \cM_{\rho,s}(\alpha,0)$, $\bX \in \cM_{\rho,s}(\alpha, \alpha)$.  Then the followings hold true:
	\begin{itemize}
		\item[(i)]For any $ r\in[0,s]$ and any $\theta \in \T^\nu$, the operator $e^{\im \bX(\theta)} \in \cL(\bH^r)$, with the standard operator norm uniformly bounded in $\theta$;
		\item[(ii)] The operator   $e^{\im  \bX} \, \bV \, e^{-\im  \bX}$ belongs to $\cM_{\rho,s}(\alpha, 0)$, while 
		$e^{\im  \bX} \, \bV \, e^{-\im  \bX} - \bV$ belongs to $\cM_{\rho,s}(\alpha, \alpha)$ with the quantitative bounds:
		\begin{equation}\label{flow.norm}
			\begin{aligned}
			& \abs{e^{ \im  \bX} \, \bV \, e^{-\im  \bX}}_{\rho,s}^{\alpha, 0} \leq 
			e^{2\, C_s \abs{\bX}_{\rho,s}^{\alpha, \alpha}}  \abs{\bV}_{\rho,s}^{\alpha, 0}  \ ; \\
			& \abs{e^{ \im  \bX} \, \bV \, e^{-\im  \bX} - \bV}_{\rho,s}^{\alpha, \alpha} \leq  2\, C_s e^{2\, C_s \abs{\bX}_{\rho,s}^{\alpha, \alpha}}  \,   \abs{\bX}_{\rho, s}^{\alpha, \alpha} \, \abs{\bV}_{\rho,s}^{\alpha, 0} \ .
			\end{aligned}
		\end{equation}
%\begin{equation}
%	\abs{e^{ \im  \bX} \, \bV \, e^{-\im  \bX} - \bV}_{\rho,s}^{\alpha, \alpha} \leq \left\{ \begin{matrix}
%	4\abs{\bX}_{\rho,s}^{\alpha,\alpha}\abs{\bV}_{\rho,s}^{\alpha,0} & \text{ for }\ 2\abs\bX_{\rho, s}^{\alpha,\alpha}<1\\
%	2e^{2\abs\bX_{\rho, s}^{\alpha,\alpha}}\abs{\bX}_{\rho,s}^{\alpha,\alpha}\abs{\bV}_{\rho,s}^{\alpha,0} & \text{ for } \ 2\abs\bX_{\rho, s}^{\alpha,\alpha}\geq 1 
%	\end{matrix} \right.
%\end{equation}
	\end{itemize}
	Analogous assertions hold for $\bV \in \lip_\tw(\Omega,\cM_{\rho,s}(\alpha,0))$ and $\bX \in \lip_\tw(\Omega,\cM_{\rho,s}(\alpha, \alpha))$.
\end{lem}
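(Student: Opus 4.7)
My plan is to deduce (i) directly from the embedding of $\cM_{\rho, s}$ into bounded operators on $\cH^r$, and to deduce (ii) from the Hadamard expansion combined with an inductive use of Lemma \ref{lem:com}.

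For (i), I would observe that condition \eqref{M3} evaluated at $\sigma = 0$ asserts $X^d, X^o \in \cM_{\rho, s}$; Remark \ref{rem:opnorm_vs_sdecay} (which applies since $s > 1/2$) then produces the bound $\norm{X^\delta(\theta)}_{\cL(\cH^r)} \leq C_{r, s} \, |\bX|_{\rho, s}^{\alpha, \alpha}$ uniformly in $\theta \in \T^\nu$ for every $r \in [0, s]$ (using $|A(\theta)|_s \leq |A|_{\rho, s}$ for real $\theta$). Hence $\bX(\theta)$ is a bounded operator on $\bH^r := \cH^r \times \cH^r$, the series $e^{\im \bX(\theta)} = \sum_{k \geq 0} (\im \bX(\theta))^k/k!$ converges in operator norm, and $\norm{e^{\im \bX(\theta)}}_{\cL(\bH^r)} \leq e^{\norm{\bX(\theta)}_{\cL(\bH^r)}}$, uniformly in $\theta$.

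For (ii), the starting point is the Hadamard identity
\begin{equation*}
e^{\im \bX} \, \bV \, e^{-\im \bX} \ = \ \sum_{k \geq 0} \frac{1}{k!} \, \ad_\bX^k(\bV) \ ,
\end{equation*}
where the factor $\im$ built into the definition \eqref{adj} exactly cancels the $(\im)^k$ arising from expanding $e^{\im \bX} \cdot \bV \cdot e^{-\im \bX}$. Two structural facts then make the sum converge in the matrix norms of interest. First, a direct inspection of \eqref{eq:sdecay_matrix_norm} yields the embedding $\cM_{\rho, s}(\alpha, \alpha) \hookrightarrow \cM_{\rho, s}(\alpha, 0)$ with constant one, $|\bW|_{\rho, s}^{\alpha, 0} \leq |\bW|_{\rho, s}^{\alpha, \alpha}$: every term appearing in the former norm either appears verbatim in the latter or coincides with the $\sigma = 0$ entry of the conjugation sum already present there. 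Second, Lemma \ref{lem:com} applied with $\ad_\bX^{k-1}(\bV) \in \cM_{\rho, s}(\alpha, \alpha) \hookrightarrow \cM_{\rho, s}(\alpha, 0)$ in place of $\bV$ yields, by induction on $k$,
\begin{equation*}
|\ad_\bX^k(\bV)|_{\rho, s}^{\alpha, \alpha} \ \leq \ \bigl(2 \, C_s \, |\bX|_{\rho, s}^{\alpha, \alpha}\bigr)^k \, |\bV|_{\rho, s}^{\alpha, 0} \ , \qquad k \geq 1 \ .
\end{equation*}

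Summing the Hadamard series termwise in the appropriate norm now delivers both estimates of \eqref{flow.norm}. For the first, separating the $k = 0$ contribution and applying the embedding to all further terms gives $|\bV|_{\rho, s}^{\alpha, 0} \cdot \sum_{k \geq 0} (2 C_s |\bX|_{\rho, s}^{\alpha, \alpha})^k / k! = e^{2 C_s |\bX|_{\rho, s}^{\alpha, \alpha}} |\bV|_{\rho, s}^{\alpha, 0}$. For the second, the difference $e^{\im \bX} \bV e^{-\im \bX} - \bV$ is exactly the sum from $k = 1$, which lies in $\cM_{\rho, s}(\alpha, \alpha)$ term by term; bounding $\sum_{k \geq 1} y^k/k! \leq y \, e^y$ with $y = 2 C_s |\bX|_{\rho, s}^{\alpha, \alpha}$ reproduces the stated bound. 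The Lipschitz version runs identically, with \eqref{eq:ad_lip_alg} replacing \eqref{s.est}; linearity of the series lets the $\wlip{\tw}$ norm propagate through the iteration without extra work. The step I regard as the most delicate is verifying the embedding $\cM_{\rho, s}(\alpha, \alpha) \hookrightarrow \cM_{\rho, s}(\alpha, 0)$ with constant one: it is not deep, but it requires careful bookkeeping of the terms in \eqref{eq:sdecay_matrix_norm} under the non-repetition convention, and any spurious constant arising here would, once iterated, destroy the exponential bound.
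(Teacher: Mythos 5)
Your proof is correct and executes precisely the argument the paper indicates in its one-line justification: convergence of the operator-norm exponential via Remark \ref{rem:opnorm_vs_sdecay} for part (i), and termwise summation of the Lie (Hadamard) series together with an inductive application of Lemma \ref{lem:com} (estimate \eqref{s.est}) for part (ii). The embedding $\abs{\bW}_{\rho,s}^{\alpha,0}\leq\abs{\bW}_{\rho,s}^{\alpha,\alpha}$ you flag as the delicate point is in fact already stated as a remark immediately after \eqref{eq:sdecay_matrix_norm} (with $\beta=\alpha$, $\beta'=0$), so there is no gap.
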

The proof of this lemma is a standard application of \eqref{s.est} and the remark that the operator norm is controlled by the $\abs{\cdot}_{\rho,s}^{\alpha, \alpha}$-norm (see also Remark \ref{rem:opnorm_vs_sdecay}).

\section{The Magnus normal form}\label{sec:magnus}
To begin with, we recall  the Pauli matrices notation. Let us introduce
\begin{equation}\label{eq:pauli}
	\bsigma_1=\left(\begin{matrix}
	0 & 1 \\ 1 & 0
	\end{matrix}\right), \; \ \ \  \bsigma_2=\left(\begin{matrix}
	0 & -\im \\ \im & 0
	\end{matrix}\right), \; \ \  \  \bsigma_3=\left(\begin{matrix}
	1 & 0 \\ 0 & -1
	\end{matrix}\right) , 
\end{equation}
and, moreover, define 
$$
%	\equiv \bsigma_j\cdot \bsigma_j \, , \  \ j=1,2,3 
\bsigma_4:=
%\bsigma_3+\im\bsigma_2 \equiv 
\left(\begin{matrix}
	1 & 1 \\ -1 & -1
	\end{matrix}\right) \ , \quad
	\b1:= \left(\begin{matrix}
	1 & 0 \\ 0 & 1
	\end{matrix}\right) , 
	\quad
	\b0 := \left(\begin{matrix}
	0 & 0 \\ 0 & 0
	\end{matrix}\right). 
$$
Using Pauli matrix notation, equation  \eqref{eq:KG_matrix} reads as
\begin{equation}\label{eq:KG_s}
\begin{aligned}
	\im\dot\vf(t)= & \bH(t)\vf(t):=(\bH_0+\bW(\omega t))\vf(t) \ , \\
&	\bH_0:=B\bsigma_3,\ \ \  \bW(\omega t):=\frac{1}{2}\, B^{-1/2}V(\omega t) B^{-1/2} \bsigma_4 \ .
\end{aligned}
\end{equation}
Note that, by assumption  ({\bf V1}), one has $ V \in \cP\cA^0_\rho $ (see Remark \ref{DPA});
therefore the properties of the pseudodifferential calculus and of the associated symbols (see Remarks \ref{rem:comp} and \ref{rem:comp_pp}) imply that 
 \begin{equation}
 \label{ass_magnus}
 B\in\cP\cA^1  \quad \mbox{and} \quad B^{-1/2}VB^{-1/2} \in \cP\cA^{-1}_\rho 
 \end{equation} 
(in case $\tm = 0$, we use Remark \ref{rem:B-1} to define $B^{-1/2}$).
The difficulty in treating equation \eqref{eq:KG_s} is that it is not perturbative in the size of the potential, so standard KAM techniques do not apply directly.\\

To deal with this problem, we perform a change of coordinates, adapted to fast oscillating systems,   which  puts \eqref{eq:KG_s} in a perturbative setting.
We refer to this procedure as Magnus normal form.
The Magnus normal form is achieved in the following way: the change of coordinates 
  $\vf(t) = e^{- \im \bX(\omega t;\omega)} \psi(t)$ conjugates \eqref{eq:KG_s} to  $\im \d_t \psi(t) = \widetilde{\bH}(t) \psi(t)$, where the Hamiltonian $\widetilde{\bH}(t)$ is given by
(see \cite[Lemma 3.2]{Bam16I}) 
	\begin{align}
		\label{eq:magnus_1}
		\widetilde{\bH}(t) & = \LieTr{\bX(\omega t;\omega)}{\bH(t)}-\int_{0}^1\LieTr{s\bX(\omega t;\omega)}{\dot\bX(\omega t;\omega)}\wrt s \\
		\label{eq:magnus_10}
		& = \bH_0 +\im[\bX,\bH_0]+\bW -\dot\bX + \im [\bX, \ldots] \ .
	\end{align}
In \eqref{eq:magnus_10}	 we wrote, informally, $[\bX, \ldots]$ to remark that all the non written terms are commutators with $\bX$.
Then one chooses $\bX$ to solve $\bW - \dot \bX = 0$; 
if the frequency $\omega$ is large and nonresonant, then   $\bX$ has size $|\omega|^{-1}$, and the new equation \eqref{eq:magnus_10} is now  perturbative in size.
 The price to pay is the appearance of  
 $\im[\bX,\bH_0]$, which is small in size but  possibly unbounded as operator. We control this term by employing pseudodifferential calculus and the properties of the commutators.
 
With this informal introduction, the main result of the section is the following:
\begin{thm}[Magnus normal form]\label{lem:magnus}
For any $0 <\gamma_0 <1$, there exist a set $\Omega_0 \subset R_\tM \subset \R^\nu$ and a constant $c_0 >0$ (independent of $\tM$),  with
 \begin{equation}
\label{meas_omega0}
\frac{\meas (R_{\tM}\backslash\Omega_0)}{\meas(R_\tM)} \leq c_0 \gamma_0 , 
\end{equation}
such that the following holds true. For any $\omega \in \Omega_0$ and  any weight $\tw > 0$, there exists a  time dependent change of coordinates $\vf(t) = e^{- \im \bX(\omega t;\omega)} \psi(t)$, where 
$$\bX(\omega t;\omega)=X(\omega t;\omega)\bsigma_4 \ , \qquad X \in \lip_\tw(\Omega_0,\cP\cA^{-1}_{\rho/2}) \ , $$  that conjugates equation \eqref{eq:KG_s} to 
\begin{equation}\label{eq:system_kg_pert}
	\im \dot\psi(t)=\widetilde{\bH}(t)\psi(t), \quad \widetilde{\bH}(t):=\bH_0+\bV(\omega t; \omega)  \ ,
\end{equation}
where 
\begin{equation}
\label{eq:V}
 \bV(\theta; \omega)=\left( \begin{matrix}
	V^{d}(\theta; \omega) & V^{o}(\theta; \omega)\\ -\oV^{o}(\theta; \omega) & -\oV^{d}(\theta; \omega)
	\end{matrix} \right), \quad {\rm with }  \ \ \ 
	[V^d]^* = V^d  \ , \ \ [V^o]^* = \bar{V^o}
\end{equation}
and
\begin{equation}
\label{VdVo}
	 V^d \in \lip_\tw(\Omega_0,\cP\cA^{-1}_{\rho/2}) \ , \quad V^o\in\lip_\tw(\Omega_0,\cP\cA^0_{\rho/2}) \ .
\end{equation}
Furthermore, for any $\ell \in \N_0$, there exists $C_\ell>0$ such that 
\begin{equation}
\label{est:VdVo}
\wp^{-1, \rho/2}_\ell (V^d)^\wlip{\tw}_{\Omega_0} + \wp^{0, \rho/2}_\ell (V^o)^\wlip{\tw}_{\Omega_0} \leq \frac{C_\ell}{\tM}  .
\end{equation}
\end{thm}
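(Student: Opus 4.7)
The plan is to follow the informal recipe sketched just before the statement, making rigorous each of the two claims: that the cohomological equation $\bW - \dot\bX = 0$ admits a solution of pseudodifferential order $-1$ and size $1/\tM$ for Diophantine $\omega$, and that the resulting conjugated operator has the structure \eqref{eq:V}--\eqref{VdVo}. I would take the ansatz $\bX = X\bsigma_4$ and, writing $\bW = W\bsigma_4$ with $W := \tfrac12 B^{-1/2} V B^{-1/2} \in \cP\cA^{-1}_\rho$ (by \eqref{ass_magnus} and Remark \ref{rem:comp_pp}), try to solve $\omega\cdot \partial_\theta X = W$. By assumption (\textbf{V2}) $W$ has zero $\theta$-average, so the equation is formally solved in Fourier by
\begin{equation*}
X(\theta;\omega) = -\im \sum_{k\in\Z^\nu\setminus\{0\}} \frac{W_k}{\omega\cdot k}\, e^{\im k\cdot\theta}.
\end{equation*}
To control the small divisors I would define $\Omega_0 := \{\omega\in R_\tM : |\omega\cdot k|\geq \gamma_0 |\omega|/\la k\ra^{\tau}\ \forall k\in\Z^\nu\setminus\{0\}\}$ with some $\tau>\nu-1$. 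A standard Fubini/slicing argument on the annulus $R_\tM$ (using that the scaling $\omega\mapsto\omega/|\omega|$ reduces the estimate to the unit sphere) gives $\meas(R_\tM\setminus\Omega_0)/\meas(R_\tM)\leq c_0\gamma_0$ uniformly in $\tM\geq\tM_*$. The $\la k\ra^\tau$ loss and the extra $\la k\ra^{2\tau+1}$ loss arising from differentiating $1/(\omega\cdot k)$ in $\omega$ are absorbed by the half-strip analyticity loss from $\rho$ to $\rho/2$, yielding $X\in\lip_\tw(\Omega_0, \cP\cA^{-1}_{\rho/2})$ with seminorms $O(1/\tM)$.

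The next step is the crucial algebraic simplification. A direct check gives $\bsigma_4^2 = \b0$, so $\bX^2 = X^2 \bsigma_4^2 = \b0$, and similarly $\bX\dot\bX = \dot\bX\bX = \b0$ and $\bX\bW\bX = \b0$. Hence the Lie series truncates: $e^{\pm\im\bX} = \b1 \pm \im\bX$. Plugging into \eqref{eq:magnus_1} and expanding,
\begin{equation*}
\widetilde\bH = \bH_0 + \bW + \im[\bX,\bH_0] + \im[\bX,\bW] + \bX\bH_0\bX - \dot\bX,
\end{equation*}
since the integrand in \eqref{eq:magnus_1} collapses to $\dot\bX$ (because $\bX\dot\bX = \dot\bX\bX = \b0$). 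The cohomological equation precisely cancels $\bW - \dot\bX$. A short computation using $\bsigma_4\bsigma_3\bsigma_4 = 2\bsigma_4$ yields $\bX\bH_0\bX = 2(XBX)\bsigma_4$, and one gets
\begin{equation*}
\widetilde\bH = \bH_0 + \im[\bX,\bH_0] + \im[\bX,\bW] + 2(XBX)\bsigma_4.
\end{equation*}

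Finally I would unpack this matrix to identify $V^d$ and $V^o$. Using $\bsigma_4\bsigma_3 - \bsigma_3\bsigma_4 = -2\bsigma_1$ and $\bsigma_4\bsigma_3+\bsigma_3\bsigma_4$, a direct matrix calculation gives that the diagonal entries of $\im[\bX,\bH_0]$ equal $\pm\im[X,B]$ and the off-diagonal entries equal $\mp\im\{X,B\}$, up to the sign pattern of $\bsigma_4$. The leading contribution to $V^d$ is therefore the commutator $\im[X,B]$: since $X\in\cP\cA^{-1}_{\rho/2}$ and $B\in\cP\cA^1$, Remark \ref{rem:comp} tells us that this commutator lies in $\cP\cA^{-1}_{\rho/2}$, inheriting the $1/\tM$ smallness of $X$. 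The leading contribution to $V^o$ is (up to sign) the anticommutator $\im\{X,B\}\in\cP\cA^0_{\rho/2}$, again of size $1/\tM$. The remaining pieces $\im[\bX,\bW]$ and $2(XBX)\bsigma_4$ are quadratic in $X$, hence of size $1/\tM^2$, and sit in $\cP\cA^{-2}_{\rho/2}$ and $\cP\cA^{-1}_{\rho/2}$ respectively, so they fit harmlessly into the allowed classes. The self-adjointness constraints $[V^d]^* = V^d$ and $[V^o]^* = \overline{V^o}$ follow from $W^* = W$ (hence $X^* = X$ and $\overline X = X$ on real $\theta$) combined with the well-known properties of commutators/anticommutators of self-adjoint operators. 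The quantitative bound \eqref{est:VdVo} is then obtained by tracking, for each $\ell$, how many $X$-seminorms enter each term and applying the pseudodifferential product/commutator estimates of Remark \ref{rem:comp} together with the $O(1/\tM)$ estimate on $X$ and its Lipschitz variation.

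The main obstacle is the apparent order-$+1$ nature of $[\bX,\bH_0]$: since $\bH_0$ is unbounded this term looks dangerous, and indeed a naive bound in the operator norm would lose a derivative. The whole scheme succeeds only because the symbolic calculus (Remark \ref{rem:comp}) shows that the commutator of a pseudodifferential operator of order $-1$ with one of order $+1$ is of order $-1$, not $0$. This regularizing gain is what permits $V^d$ to be of strictly negative order, which is exactly the hypothesis needed to enter the reducibility scheme via the embedding Lemma \ref{lem:emb}.
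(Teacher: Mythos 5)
Your proof is correct and follows essentially the same strategy as the paper: solve the cohomological equation on a Diophantine subset of $R_\tM$, exploit the nilpotency $\bsigma_4^2=\b0$ to kill the Lie-series remainder, and use symbolic calculus to see that $[X,B]$ has order $-1$ while $[X,B]_{\rm a}$ has order $0$, which yields exactly \eqref{VdVo}. Your repackaging of the nilpotency as the exact identity $e^{\pm\im\bX}=\b1\pm\im\bX$ is a slightly cleaner route to the same formula than the paper's expansion of the Lie series to third order followed by the check that $\ad^3_\bX(\bH_0)=\b0$.

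Two small blemishes, neither of which invalidates the argument. First, the term $\im[\bX,\bW]$ that you retain and estimate in $\cP\cA^{-2}_{\rho/2}$ is in fact identically zero, by the very same observation you use for $\bX\dot\bX$: one has $\bX\bW=XW\bsigma_4^2=\b0$ and $\bW\bX=\b0$, so $[\bX,\bW]=\b0$. The paper drops it immediately, and you should too; estimating a vanishing term is harmless but obscures the fact that the new perturbation consists only of $\im[\bX,\bH_0]$ and $\bX\bH_0\bX$. Second, your informal description of the sign pattern of $\im[\bX,\bH_0]$ (``$\pm\im[X,B]$ on the diagonal, $\mp\im\{X,B\}$ off-diagonal'') is off: a direct computation gives $\im[\bX,\bH_0]=\im[X,B]\,\b1-\im[X,B]_{\rm a}\,\bsigma_1$, so both diagonal entries equal $\im[X,B]$ and both off-diagonal entries equal $-\im[X,B]_{\rm a}$; the alternating sign pattern of $\bsigma_4$ shows up only in the quadratic term $2XBX\bsigma_4$. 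Since $\im[X,B]$ is anti-selfadjoint and real-conjugation symmetric, the $(2,2)$ entry $-\overline{V^d}=\im[X,B]$ is consistent with the form \eqref{eq:V}. This wording slip does not affect your identification of the leading pieces of $V^d$ and $V^o$ nor the order and size estimates, which are correct.
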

\begin{proof}
	The proof is splitted into two parts, one for the formal algebraic construction, the other for checking that the operators that we have found possess the right pseudodifferential properties we are looking for.\\
	\textbf{Step I).} 
%	We compute the transformation of the Hamiltonian under the change of coordinates. We have that $\psi(t)$ defined by  $\vf(t) = e^{- \im \bX(\omega t;\omega)} \psi(t)$ fulfills $\im \d_t \psi(t) = \widetilde{\bH}(t) \psi(t)$, where the Hamiltonian $\widetilde{\bH}(t)$ is given by
%	\begin{equation}\label{eq:magnus_1}
%		\begin{split}
%		\widetilde{\bH}(t) & = \LieTr{\bX(\omega t;\omega)}{\bH(t)}-\int_{0}^1\LieTr{s\bX(\omega t;\omega)}{\dot\bX(\omega t;\omega)}\wrt s , 
%		\end{split}
%	\end{equation}
%	(see \cite[Lemma 3.2]{Bam16I} for a derivation of \eqref{eq:magnus_1}).
	 Expanding \eqref{eq:magnus_1} in commutators we have
	\begin{equation}\label{eq:magnus_2}
		\widetilde{\bH}(t) = \bH_0 +\im[\bX,\bH_0]-\tfrac{1}{2}[\bX,[\bX,\bH_0]]+\bW -\dot\bX +\bR \ ,
	\end{equation}
		where the remainder $\bR$ of the expansion is given in integral form by
	\begin{equation}\label{eq:magnus_rem}
		\begin{split}
		\bR   := & \int_{0}^1\frac{(1-s)^2}{2}\LieTr{s\bX}{\ad_\bX^3(\bH_0)}\wrt s\\ & + \im \int_0^1\LieTr{s\bX}{[\bX,\bW]}\wrt s -\im\int_0^1(1-s)\LieTr{s\bX}{[\bX,\dot\bX]}\wrt s . 
		\end{split}
	\end{equation}
	From the properties of the Pauli matrices, we note that $\bsigma_4^2=\b0$. This means that the terms in \eqref{eq:magnus_rem} involving $\bW$ and $\dot\bX$ are null, and the remainder is given only by
	\begin{equation}\label{eq:magnus_rem2}
		\bR = \int_{0}^1\frac{(1-s)^2}{2}\LieTr{s\bX}{\ad_\bX^3(\bH_0)}\wrt s . 
	\end{equation}
	We ask $\bX$ to solve the homological equation 
	\begin{equation}\label{hom.eq}
		\b0 = \bW-\dot\bX= \left(\frac{1}{2}\,B^{-1/2} V(\omega t) B^{-1/2} - \dot X(\omega t; \omega) \right) \bsigma_4 .
	\end{equation}	
	Expanding in Fourier coefficients with respect to the angles, its solution is actually given by
	\begin{equation}\label{eq:magnus_sol_hom}
	\begin{aligned}
		&\widehat{X}(k; \omega)=\frac{1}{2\im\,\omega\cdot k}B^{-1/2}\widehat{V}(k) B^{-1/2} , \qquad \mbox{ for } k\in\Z^{\nu}\backslash\{0\}, \\
		& \widehat{X}(0; \omega)\equiv \b0
		\end{aligned}
	\end{equation}
	where the second of \eqref{eq:magnus_sol_hom} is a consequence of ({\bf V2}).
	It remains to compute the terms in \eqref{eq:magnus_1} and \eqref{eq:magnus_rem2} involving $\bH_0$. Using again the structure of the Pauli matrices, we get:
	\begin{equation}\label{eq:ad_X1}
		\ad_\bX(\bH_0):= \im [X\bsigma_4,B\bsigma_3]  = \im XB(\b1-\bsigma_1)-\im BX(\b1+\bsigma_1) = \im[X,B]\b1-\im [X,B]_{\rm a}\bsigma_1 \ ,
	\end{equation}
	where we have denoted by $[X,B]_{\rm a}:= XB + BX$ the anticommutator. Similarly one has 
	\begin{equation}\label{eq:ad_X2}
		\begin{split}
		\ad_\bX^2(\bH_0) & := - [X\bsigma_4,[X\bsigma_4,B\bsigma_3]] \\ 
		& \stackrel{\eqref{eq:ad_X1}}{=} -\left([X\bsigma_4,[X,B]\b1]-[X\bsigma_4,[X,B]_{\rm a}\bsigma_1]\right)\\
		& = -([X,[X,B]]-[X,[X,B]_{\rm a}]_{\rm a})\bsigma_4\\
		& = 4XBX\bsigma_4  \ ; 
		\end{split}
	\end{equation}
	thus
	\begin{equation}
		\begin{split}
		\ad_\bX^3(\bH_0) 
		%:= -\im[X\bsigma_4,[X\bsigma_4,[X\bsigma_4,B\bsigma_3]]] \qquad \qquad \qquad  \\
		& \stackrel{\eqref{eq:ad_X2}}{=} 4\im[X\bsigma_4,XBX\bsigma_4]=\b0 \ . 
		\end{split}
	\end{equation}
	This shows that $\bR\equiv \b0$ and, imposing \eqref{eq:magnus_sol_hom} in \eqref{eq:magnus_1}, we obtain
	\begin{equation}\label{eq:magnus_3}
		\widetilde{\bH}(t) = \bH_0 +\bV(\omega t;\omega) \ ,
	\end{equation}
	with
	\begin{equation}\label{eq:magnus_4}
		\begin{split}
			V^d(\theta; \omega) &:= \im[X(\theta; \omega),B]+2X(\theta; \omega)BX(\theta; \omega) \ ,\\
			V^o(\theta; \omega ) &:= -\im[X(\theta; \omega ),B]_{\rm a}+2X(\theta; \omega)BX(\theta; \omega) \ .
		\end{split}
	\end{equation}
	\textbf{Step II).} We show now that $X, V^d$ and $V^o$, defined in \eqref{eq:magnus_sol_hom} and \eqref{eq:magnus_4} respectively, are pseudodifferential operators in the proper classes, provided $\omega$ is sufficiently nonresonant.
	First  consider $\bX$.  For $\gamma_0>0$ and $ \tau_0 >\nu -1$, define the set of Diophantine frequency vectors
	\begin{equation}\label{eq:magnus_omega}
		\Omega_0 \equiv \Omega_0(\gamma_0, \tau_0) :=\Set{\omega\in R_{\tM} | \abs{\omega\cdot k}\geq \frac{\gamma_0}{\braket{k}^{\tau_0}}\tM \quad \forall\,k\in\Z^{\nu}\backslash\{0\} } \ .
	\end{equation}
	We will prove in Proposition \ref{prop:diop_magnus} below that 
	\begin{equation}
	\label{omega0est}
	{\frac{\meas (R_{\tM}\backslash\Omega_0)}{\meas(R_\tM)} \leq c_0 \gamma_0} \ 
	\end{equation}
	for  some constant $c_0>0$  independent of $\tM$ and $\gamma_0$.
 This fixes the set $\Omega_0$ and proves  \eqref{meas_omega0}.\\
	We show now that   $X \in \lip_\tw(\Omega_0,\cP\cA^{-1}_{\rho/2})$. 
	First note that, by Lemma \ref{lem:fou.symb}(i) (in Appendix \ref{app:tr}) and Remark \ref{rem:comp_pp}, one has $B^{-1/2} \wh V(k) B^{-1/2} \in \cP\cA^{-1}$ (both $B$ and $V$ are independent from $\omega$) with  
	$$
	\wp^{-1}_\ell(B^{-1/2} \wh V(k) B^{-1/2}) \leq 4 e^{- \rho|k|} \, \wp^{-1,\rho}_\ell(B^{-1/2} V B^{-1/2}) \leq 4  e^{- \rho|k|} \, C_\ell  .
	$$
	Provided $\omega \in \Omega_0$, it follows that 
	$$
	\wp^{-1}_\ell (\wh X(k; \cdot))^\infty_{\Omega_0} 
	\leq 
	\frac{1}{2}\left[\sup_{\omega \in \Omega_0}\frac{1}{|\omega \cdot k|} \right]\, \wp^{-1}_\ell(B^{-1/2} \wh V(k) B^{-1/2}) 
	\leq \frac{4\la k \ra^{\tau_0}}{\gamma_0 \, \tM } e^{-\rho|k|} C_\ell  .
	$$
	To compute the Lipschitz norm, it is convenient to use the   notation
	\begin{equation}\label{eq:not_lip_easy}
		\Delta_{\omega}f(\omega)	= f(\omega + \Delta \omega)-f(\omega) \ , 
	\end{equation}
	with $\omega$, $\omega+\Delta\omega\in\Omega_0$, $\Delta\omega\neq 0$. In this way one gets
	$$ \abs{\Delta_{\omega}\wh X(k; \omega)} \leq \frac{\abs{\Delta\omega}}{2\abs{\omega\cdot k}\abs{(\omega+\Delta\omega)\cdot k}}\abs{B^{-1/2} \wh V(k) B^{-1/2}} \ \Rightarrow \ \wp_{\ell}^{-1}(\wh X(k; \cdot))^\lip_{\Omega_0} \leq \frac{4\braket{k}^{2\tau_0}}{(\gamma_0\tM)^2}e^{-\rho\abs k}C_\ell \ . $$
	As a consequence  $X(\theta; \omega)= \sum_{k} \wh X(k;\omega) e^{\im k \cdot \theta}$ is a pseudodifferential operator in the class $\lip_\tw(\Omega_0,\cP\cA^{-1}_{\rho/2})$ (see Lemma \ref{lem:fou.symb}(ii) in Appendix \ref{app:tr} for details) fulfilling
	\begin{equation}
	\label{est:X}
	\wp^{-1, \rho/2}_\ell (X)_{\Omega_0}^\wlip{\tw}\leq \left( \frac{1}{\gamma_0\tM}+\frac{\tw}{\gamma_0^2\tM^2} \right)\frac{C_\ell}{\rho^{2\tau_0+\nu}} \leq  \frac{\max(1, \tw)}{\tM} \, \frac{\wt C_\ell}{\rho^{2\tau_0 + \nu}} \ .
	\end{equation}
	It follows by Remark \ref{rem:comp_pp} that $V^d \in \lip_\tw(\Omega_0,\cP\cA^{-1}_{\rho/2})$ while $V^o \in \lip_\tw(\Omega_0,\cP\cA^{0}_{\rho/2})$ with the claimed estimates \eqref{est:VdVo}.\\
	Finally, $V$ is a real selfadjoint operator, simply because it is a real bounded potential, and therefore  $V^* = V = \bar{V}$. 
	It follows by Remark \ref{rem:coef.mat} and 
	the explicit expression \eqref{eq:magnus_sol_hom} that $X^* = X = \bar{X}$.
	Using these properties one verifies by a direct computation that $[V^d]^* = V^d$ and $[V^o]^* = V^o$.
	Estimate \eqref{est:X} and the symbolic calculus of Remark \ref{rem:comp_pp} give \eqref{est:VdVo}.
\end{proof}

\begin{rem}
Everything works with the more general assumptions 
 $V \in \cP\cA^0_\rho$.
\end{rem}

\begin{rem}
\label{rem:p}
	Pseudodifferential calculus is used to guarantee that $V^d$ has order -1 while $V^o$ has order 0 (see \eqref{VdVo}).
	Without this information it would be problematic to apply the standard KAM iteration of Kuksin \cite{kuk87},
	which requires the  eigenvalues to have an asymptotic
	of the form $j + \cO(j^\delta)$ with $\delta < 0$. 
In principle one might circumvent this problem by using the ideas of \cite{BBM14, FP15}
to regularize the order of the perturbation.  
However in our context this smoothing procedure is tricky, since  it produces terms of size $|\omega|$,   which are very large and therefore  unacceptable for our purposes.
\end{rem}

\begin{prop}\label{prop:diop_magnus}
For $\gamma_0>0$ and $ \tau_0 >\nu -1$, 	 the set $\Omega_0$ defined in \eqref{eq:magnus_omega} fulfills  \eqref{omega0est}.
\end{prop}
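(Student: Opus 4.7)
The plan is to apply a standard slicing/slab argument, common for Diophantine measure estimates, taking care that the scaling $\tM$ works out so that the resulting bound is independent of $\tM$.

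First, I would write the complement as the countable union of resonant sets:
\begin{equation*}
	R_\tM \setminus \Omega_0 = \bigcup_{k \in \Z^\nu \setminus \{0\}} \cR_k \ , \qquad \cR_k := \Set{\omega \in R_\tM | |\omega \cdot k| < \frac{\gamma_0 \tM}{\la k \ra^{\tau_0}}} \ .
\end{equation*}
Each $\cR_k$ sits inside a slab of width $2\gamma_0 \tM /(|k|\la k \ra^{\tau_0})$ around the hyperplane $\omega \cdot k = 0$. Choosing coordinates with first axis along $k/|k|$ and using $R_\tM \subset B_{2\tM}(0)$, the transverse cross-section is contained in a $(\nu-1)$-dimensional ball of radius $2\tM$, so
\begin{equation*}
	\meas(\cR_k) \leq \frac{2\gamma_0 \tM}{|k|\la k \ra^{\tau_0}} \cdot c_{\nu-1}(2\tM)^{\nu-1} = C_\nu \, \gamma_0 \, \tM^\nu \, \frac{1}{|k|\la k \ra^{\tau_0}} .
\end{equation*}

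Next, I would sum over $k \in \Z^\nu \setminus \{0\}$. The series $\sum_{k \neq 0} \frac{1}{|k|\la k \ra^{\tau_0}}$ converges exactly when $\tau_0 + 1 > \nu$, i.e.\ $\tau_0 > \nu - 1$, which is the hypothesis. Hence
\begin{equation*}
	\meas(R_\tM \setminus \Omega_0) \leq C_\nu \gamma_0 \tM^\nu \sum_{k \neq 0} \frac{1}{|k|\la k \ra^{\tau_0}} =: \tilde c_0 \gamma_0 \tM^\nu \ .
\end{equation*}
Since $\meas(R_\tM) = c_\nu (2^\nu - 1) \tM^\nu$, dividing yields
\begin{equation*}
	\frac{\meas(R_\tM \setminus \Omega_0)}{\meas(R_\tM)} \leq c_0 \gamma_0 ,
\end{equation*}
with $c_0 = \tilde c_0 / (c_\nu(2^\nu - 1))$ depending only on $\nu$ and $\tau_0$, not on $\tM$.

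There is no real obstacle here: the only subtle point is to notice that the threshold $\gamma_0 \tM / \la k \ra^{\tau_0}$ grows linearly in $\tM$, so the slab widths grow like $\tM$, but the ambient volume grows like $\tM^\nu$, and the extra factor $\tM^{\nu-1}$ coming from the cross-sectional area exactly matches the growth of $\meas(R_\tM)$. This cancellation is precisely why one rescales the Diophantine threshold by $\tM$ in \eqref{eq:magnus_omega}, and it is what makes the estimate uniform in $\tM$.
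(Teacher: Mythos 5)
Your argument is correct and is essentially the same as the paper's: you bound each resonant slab $\cR_k$ by a direct slicing argument giving $\meas(\cR_k)\lesssim \gamma_0\,\tM^\nu/(|k|\la k\ra^{\tau_0})$, then sum over $k$ using $\tau_0>\nu-1$; the paper reaches the identical per-$k$ bound by invoking its Lemma \ref{tecnico} (with $\varsigma\equiv 0$), whose proof is the same Fubini/slicing computation you carried out inline.
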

\begin{proof}
	For any $k \in \Z^\nu \setminus\{0\}$, define  the sets
		$\cG^k:=\Set{ \omega\in R_{\tM} | \abs{\omega\cdot k}< \frac{\gamma_0}{\braket{k}^{\tau_0}}\tM  }$.
	By Lemma \ref{tecnico}
$		\abs{\cG^k}\lesssim
%		 \tM^{\nu-1}\frac{\gamma_0}{\braket{k}^{\tau_0}}\tM \left( \inf_{\omega\in\Omega_0} \abs{\nabla_{\omega}(\omega\cdot k)} \right)^{-1} \lesssim 
\frac{\gamma_0}{\abs{k}^{\tau_0+1}}\tM^{\nu}$.
	Therefore the set  $\cG:=\bigcup_{k\neq 0}\cG^k$ has measure bounded by $\abs \cG \leq C\gamma_0\tM^{\nu}$, which proves the claim.
\end{proof}

\section{Balanced unperturbed Melnikov conditions}
\label{sub:um}
As we shall see, in order to perform a converging KAM scheme, we must be able to impose  second order Melnikov conditions, namely bounds from below of quantities like
$\omega \cdot k + \lambda_i \pm \lambda_j$, 
where the $\lambda_j$'s are the eigenvalues of the operator $B$ defined in \eqref{def:B}. Explicitly,
\begin{equation}
		\lambda_j:=\sqrt{j^2+\tm^2}=j+\frac{c_j(\tm)}{j} \ , \qquad c_j(\tm) := j (\sqrt{j^2+\tm^2}-j).
\end{equation}
One can check that
$0 \leq c_j(\tm) \leq \tm^2$ $\, \forall j \in \N$.
%The major difficulty in proving the second order Melnikov conditions that are required for our KAM reduction is to provide quantitative estimates in $\tM$. \\
We introduce the notation of the indexes sets:
\begin{equation}\label{eq:indices_meln}
	\cI^+ :=\Z^{\nu}\times\N\times\N  \ , \qquad 	\cI^-  :=\Set{(k,j,l)\in\cI^+ | (k,j,l)\neq(0,a,a), \ a\in\N} \ .
\end{equation}
Furthermore, we define the relative measure of a measurable set $\Omega$ as
\begin{equation}
\label{def:mr}
\me_r (\Omega) := \frac{|\Omega|}{|R_{\tM}|} \equiv \frac{|\Omega|}{\tM^\nu \, (2^{\nu }-1) c_\nu} 
\end{equation}
where $| \cC|$ is the Lebesgue measure of the set $\cC$ and $c_\nu$ is the volume of the unitary ball in $\R^\nu$.\\
The main result of this section is the following theorem.
\begin{thm}[Balanced Melnikov conditions]\label{lem:meln_unpert}
	Fix  $0 \leq \alpha \leq 1$ and assume that $\tM \geq \tM_0 := \min \{ \tm^2 , \braket\tm^{1/\alpha} \}$ if $\alpha\in [0,1]$.
%	\begin{equation}
%		\tM \geq \tM_0 := \min \{ \tm^2 , \braket\tm^{1/\alpha} \}  . 
%	\end{equation}
%	\begin{equation}\label{eq:hyp_M0}
%		\left\{ \begin{array}{ll}
%		\tM \geq \tM_0 := \min \{ \tm^2 , \braket\tm^{1/\alpha} \}  & \text{ for } \  \alpha\in(0,1] \\
%		\tM\geq \tM_0:= \tm^2 \land \wtgamma^{2/3} \leq 2\braket{\tm}^{-1} & \text{ for } \ \alpha=0
%		\end{array}  \right. \ .
%	\end{equation}
	Then, for $0<\wt\gamma\leq \min\{ \gamma_0^{3/2},1/8 \}$ and $ \widetilde\tau \geq 2\nu +3$, the set
	\begin{equation}\label{eq:U_alpha}
		\cU_\alpha:=\Set{\omega \in\Omega_0 | \abs{\omega\cdot k+\lambda_j\pm\lambda_l} \geq \frac{\widetilde{\gamma}}{\braket{k}^{\widetilde\tau}}\frac{\braket{j\pm l}^{\alpha}}{\tM^{\alpha}} \ \ \forall (k,j,l)\in\cI^{\pm}  }
	\end{equation}
	is of large relative measure, that is \begin{equation}
	\label{me.rel}
	\me_r(\Omega_0\backslash\cU_\alpha) \leq C \,  \widetilde{\gamma}^{1/3} , 
	\end{equation} where $C>0$ is independent of $\tM$ and $\widetilde \gamma$.
\end{thm}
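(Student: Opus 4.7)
My plan is to bound the complement $\Omega_0\setminus\cU_\alpha$ as the union of the resonance slabs
$$\cR^\pm_{k,j,l}:=\Set{\omega\in R_{\tM} | \abs{\omega\cdot k+\lambda_j\pm\lambda_l}< \widetilde\gamma\braket{j\pm l}^\alpha/(\braket{k}^{\widetilde\tau}\tM^\alpha)}$$
indexed by $(k,j,l)\in\cI^\pm$, each of which will be measured through Lemma \ref{tecnico}. First I would dispose of the trivial cases. For $k=0$ the ``$+$'' condition follows at once from $\lambda_j+\lambda_l\geq j+l\geq 2$, while for the ``$-$'' case with $j\neq l$ the identity $\abs{\lambda_j-\lambda_l}=\abs{j-l}(j+l)/(\lambda_j+\lambda_l)\gtrsim \abs{j-l}/\braket{\tm}$ dominates the right-hand side once $\tM\geq\tM_0$ and $\widetilde\gamma\leq 1/8$. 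For $k\neq 0$, the trivial bound $\abs{\omega\cdot k}\leq 2\tM\abs{k}$ makes the Melnikov condition automatic as soon as $\abs{j\pm l}\geq 4\tM\abs{k}$, so it suffices to treat triples with $\abs{j\pm l}\leq 4\tM\abs{k}$.

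The key step is the asymptotic $\lambda_j = j+\eta_j$ with $\eta_j := \tm^2/(j+\lambda_j)\in [0,\tm^2/(2j)]$, so that $\lambda_j\pm\lambda_l = (j\pm l) + (\eta_j\pm\eta_l)$. For fixed $(k,h)$ with $h:=j\pm l$ I would parametrize the pairs $(j,l)$ with $j\pm l=h$ by $\ell:=\min(j,l)\in\N$; each $\cR^\pm_{k,j,l}$ is then a slab centred at $\omega\cdot k=-h-(\eta_j\pm\eta_l)$, close to the ``limiting'' slab $\{\omega\cdot k=-h\}$ by at most $\tm^2/\ell$. Adjacent slab centres (as $\ell$ varies) are separated by $\sim \tm^2/\ell^2$ for $\ell\ll h$ (and $\sim \tm^2 h/\ell^3$ for $\ell\gg h$), so the slabs are pairwise disjoint precisely for $\ell\lesssim L^*:=\tm/\sqrt{\delta_h}$, where $\delta_h$ denotes the right-hand side of the Melnikov inequality, and they collapse into a single slab of width $\lesssim \delta_h$ for $\ell > L^*$.

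Since each slab contributes measure $\lesssim \delta_h\tM^{\nu-1}/\abs{k}$ by Lemma \ref{tecnico}, the union over pairs with fixed $(k,h)$ is bounded by $L^*\delta_h\tM^{\nu-1}/\abs{k}\lesssim \tm\sqrt{\delta_h}\,\tM^{\nu-1}/\abs{k}$. Summing over $\abs{h}\leq 4\tM\abs{k}$, using $\sqrt{\delta_h}=\widetilde\gamma^{1/2}\braket{h}^{\alpha/2}/(\tM^{\alpha/2}\braket{k}^{\widetilde\tau/2})$, gives a contribution per $k$ of order $\tm\,\widetilde\gamma^{1/2}\tM^\nu \abs{k}^{\alpha/2}/\braket{k}^{\widetilde\tau/2}$; the subsequent sum over $k\neq 0$ converges thanks to $\widetilde\tau\geq 2\nu+3$. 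This produces a total bad measure $\lesssim \tm\,\widetilde\gamma^{1/2}\tM^\nu$; dividing by $\abs{R_{\tM}}\sim \tM^\nu$ and using $\widetilde\gamma^{1/2}\leq\widetilde\gamma^{1/3}$ (valid for $\widetilde\gamma\leq 1$) yields the claimed $C\widetilde\gamma^{1/3}$ with $C$ absorbing the $\tm$-dependence.

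The main obstacle will be the infinite family of pairs $(j,l)$ sharing the same $j\pm l=h$ in the ``$-$'' case (and the $\sim h/2$-family in the ``$+$'' case): a naive per-pair summation of individual slab measures simply diverges. The decisive ingredient is the slab-clustering analysis above, where the sharp asymptotics of $\eta_j$ reduce the effective number of independent slabs from $\infty$ (or $h$) to $\lesssim \tm/\sqrt{\delta_h}$; it is precisely this square root which produces the $\widetilde\gamma^{1/2}$-scaling that underlies the final bound.
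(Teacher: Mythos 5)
Your approach is correct and is a genuinely different (in fact sharper) argument than the paper's. The paper's proof is two-staged: Lemma \ref{lem:u1} first excises a $\widetilde\gamma_1$-proportion of frequencies to enforce the diophantine condition $\abs{\omega\cdot k+l}\geq\widetilde\gamma_1\braket{l}^\alpha/(\braket{k}^{\tau_1}\tM^\alpha)$ on the ``integer part''; Lemma \ref{lem:new_T2} then uses this to show that whenever $j\braket{j\pm l}^\alpha\geq\tR(k)\sim\tm^2\tM^\alpha\braket{k}^{\tau_1}/\widetilde\gamma_1$ the corrections $c_j(\tm)/j\pm c_l(\tm)/l$ are too small to spoil the Melnikov inequality already guaranteed on the integer part, so only $\lesssim\tR(k)\braket{h}^{-\alpha}$ indices per $(k,h)$ need measuring; the choice $\widetilde\gamma_1=\widetilde\gamma^{1/3}$, $\widetilde\gamma_2=\widetilde\gamma^{2/3}$ yields the exponent $1/3$. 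You bypass the intermediate diophantine condition entirely and argue geometrically for each $(k,h)$: writing $\lambda_j\pm\lambda_l=h+(\eta_j\pm\eta_l)$ with $\eta_m\leq\tm^2/(2m)$, all slab centres with $\ell=\min(j,l)>\ell_0$ lie within $\lesssim\tm^2/\ell_0$ of $-h$, hence their union sits in a single band of that width, while the remaining $\leq\ell_0$ slabs each have width $\lesssim\delta_h$; optimizing $\ell_0\sim\tm/\sqrt{\delta_h}$ gives a per-$(k,h)$ contribution $\lesssim\tm\sqrt{\delta_h}\,\tM^{\nu-1}/\abs{k}$, and the sums over $h,k$ close under $\widetilde\tau\geq 2\nu+3$. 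This actually gives the better bound $\me_r(\Omega_0\setminus\cU_\alpha)\lesssim\tm\,\widetilde\gamma^{1/2}$, which of course implies \eqref{me.rel}. Two minor remarks. First, your phrase ``collapse into a single slab of width $\lesssim\delta_h$'' for $\ell>L^*$ is imprecise: those slabs sit inside a band of width $\lesssim\tm^2/L^*\sim\tm\sqrt{\delta_h}$, not $\delta_h$; your final estimate $L^*\delta_h=\tm\sqrt{\delta_h}$ is still correct, but the cleanest framing is simply to split at a free $\ell_0$, use subadditivity (no disjointness needed), bound the $\ell>\ell_0$ part by the band containment, and optimize — this also avoids the two-regime $\tm^2/\ell^2$ versus $\tm^2 h/\ell^3$ discussion of inter-centre spacing, which is not what actually controls the measure. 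Second, you should also dispose of the easy case $k\neq 0$, $j=l$ in the ``$-$'' condition, which follows immediately from $\omega\in\Omega_0$ together with $\widetilde\gamma\leq\gamma_0$ and $\widetilde\tau>\tau_0$, as the paper notes at the start of Lemma \ref{lem:new_T2}.
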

%\begin{rem}\label{rem:alpha_cantor}
%	The family of Cantor sets $(\cU_\alpha)_{\alpha\in(0,1)}$ is completely unordered. Therefore, the choice of the frequency vector $\omega$ will strongly depend on the choice of the smoothing order $\alpha$ and, in particular, it will influence  the expansion of the final eigenvalues $\lambda_j^{\infty,\alpha}$ as perturbative versions of the initial ones, see Corollary \ref{cor:conv}.
%\end{rem}
We will use several times the following standard estimate.%\begin{lem}\label{tecnico}
%	Consider a $C^1$-function $f$  of $\nu$ variables in a domain $A$ contained in some $\nu$-hypercube 
%	%$|x_i|\leq\zeta/2$  (o $0\leq x_i\leq\zeta$?) 
%	with side of length $\tL$.  Assume that for some $\upsilon \in \R^\nu$ with $|\upsilon| =1$, 
%	$$
%	\inf_{\omega\in A}| \nabla_\omega f (\omega) \cdot \upsilon |\geq  c>0 \ .
%	$$  
%	Then, for any $\delta \geq 0$ the measure of the set $ A_0:=\Set{\omega\in A | \abs{f(\omega)} \leq \delta } $ satisfies the upper bound
%	 \begin{equation}\label{measd}
%	\abs{A_0}\leq \frac{2 \delta }{c} \tL^{\nu-1} \ .
%	\end{equation}
%	  
%\end{lem}

\begin{lem}\label{tecnico}
Fix  $k \in \Z^\nu \setminus \{0\}$ and 
let $R_\tM \ni \omega \mapsto \varsigma(\omega) \in \R$ be a Lipschitz function fulfilling
$\abs{\varsigma}^{\lip}_{R_\tM} \leq \tc_0 < |k| . $
Define 
$f(\omega) = \omega \cdot k  + \varsigma(\omega)$.
	Then, for any $\delta \geq 0$, the measure of the set $ A:=\Set{\omega\in R_\tM | \abs{f(\omega)} \leq \delta } $ satisfies the upper bound
	 \begin{equation}\label{measd}
	\abs{A}\leq \frac{ 2 \delta }{|k|- \tc_0} (4\tM)^{\nu-1} \ .
	\end{equation}
	  
\end{lem}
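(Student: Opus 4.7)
\medskip

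\noindent\textbf{Proof plan.} The approach is the standard Fubini--slicing argument for sublevel sets of small denominators. Given $k\in\Z^\nu\setminus\{0\}$, I would decompose $\R^\nu$ as $\R\, e_k \oplus k^\perp$, where $e_k:=k/|k|$, writing every $\omega\in R_\tM$ as $\omega = t\, e_k + \omega^\perp$ with $t\in\R$ and $\omega^\perp\in k^\perp$. Along each slice $\{t\,e_k+\omega^\perp : t\in\R\}$, I would study the one-variable function $g_{\omega^\perp}(t):= f(t\,e_k+\omega^\perp)= t|k| +\varsigma(t\,e_k+\omega^\perp)$.

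The key observation is that, for any $t_1<t_2$, by the Lipschitz assumption on $\varsigma$,
\begin{equation*}
g_{\omega^\perp}(t_2)-g_{\omega^\perp}(t_1)\ \geq\ (t_2-t_1)|k|-\tc_0(t_2-t_1)\ =\ (t_2-t_1)\bigl(|k|-\tc_0\bigr),
\end{equation*}
and by hypothesis $|k|-\tc_0>0$. Hence $g_{\omega^\perp}$ is strictly increasing, and the preimage of the interval $[-\delta,\delta]$ under $g_{\omega^\perp}$ has one-dimensional Lebesgue measure at most $2\delta/(|k|-\tc_0)$. In particular, the slice $A\cap \{t\,e_k+\omega^\perp:t\in\R\}$ has length bounded by $2\delta/(|k|-\tc_0)$.

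Next, I would integrate this slicewise bound over $\omega^\perp$ using Fubini. Since $R_\tM\subset \bar B_{2\tM}(0)$, the projection of $R_\tM$ onto the hyperplane $k^\perp$ is contained in the $(\nu-1)$-dimensional ball of radius $2\tM$, which in turn is contained in a cube of side $4\tM$ in $k^\perp$; hence its $(\nu-1)$-dimensional measure is at most $(4\tM)^{\nu-1}$. Combining this with the one-dimensional slice bound yields
\begin{equation*}
|A|\ \leq\ \frac{2\delta}{|k|-\tc_0}\,(4\tM)^{\nu-1},
\end{equation*}
which is exactly \eqref{measd}. There is no serious obstacle here; the only point that must be handled carefully is making sure the Lipschitz estimate is applied to the \emph{restriction} of $\varsigma$ to the slice (which is Lipschitz with the same constant $\tc_0$ since translation by $t\,e_k$ is an isometry), and that the decomposition $R^\nu = \R e_k \oplus k^\perp$ is isometric so Fubini applies without Jacobian factors.
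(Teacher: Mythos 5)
Your argument is correct and follows exactly the same route as the paper: both slice $R_\tM$ along the direction $k/|k|$, use the Lipschitz bound to show the one-dimensional restriction of $f$ has derivative bounded below by $|k|-\tc_0$ (so each fiber of $A$ has length at most $2\delta/(|k|-\tc_0)$), and integrate via Fubini over the projection onto $k^\perp$, bounded by $(4\tM)^{\nu-1}$. Your write-up is simply a more detailed version of the paper's terse two-line proof.
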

\begin{proof}
Take $\omega_1 = \omega + \epsilon k$, with $\epsilon$ sufficiently small so that $\omega_1 \in R_\tM$. \\
Then
$
\displaystyle{\frac{|f(\omega_1) - f(\omega)|}{|\omega_1 - \omega|} \geq |k| -\abs{\varsigma}^{\lip}_{R_\tM} > |k|- \tc_0}
$
and the estimate follows by Fubini theorem.
\end{proof}
In the rest of the section we write $a \lesssim b$, meaning that $a \leq C b$ for some numerical constant $C>0$ independent of the relevant parameters.

The result of Theorem \ref{lem:meln_unpert} is carried out in two steps. The first one is the following lemma.
\begin{lem}
\label{lem:u1}
Fix  $0 \leq \alpha \leq 1$. 
 There exist $\widetilde\gamma_1>0$ and $\tau_1>\nu + \alpha$ such that the set
	\begin{equation}\label{eq:integer_cond}
		\cT_1:=\Set{\omega\in\Omega_0 | \abs{\omega\cdot k + l}\geq \frac{\widetilde\gamma_1}{\braket{k}^{\tau_1}}\frac{\la{l}\ra^{\alpha}}{\tM^{\alpha}} \quad \forall (k,l)\in\Z^{\nu+1}\backslash\{0\}  }
	\end{equation} 
	has relative measure $\me_r(\Omega_0 \backslash\cT_1) \leq C_1 \widetilde{\gamma_1}$, where $C_1>0$ is independent of $\tM$ and $\widetilde \gamma_1$.
	\end{lem}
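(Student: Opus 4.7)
The plan is a standard covering argument: for each $(k,l)\in\Z^{\nu+1}\setminus\{0\}$ I introduce the ``resonant slice''
\[
\cB_{k,l} := \Bigl\{ \omega\in R_{\tM} \;\Big|\; |\omega\cdot k + l| < \frac{\widetilde\gamma_1}{\langle k\rangle^{\tau_1}}\,\frac{\langle l\rangle^\alpha}{\tM^\alpha}\Bigr\},
\]
estimate each slice via Lemma \ref{tecnico}, and sum the contributions. The measure I ultimately need to control is $\sum_{(k,l)\neq 0}|\cB_{k,l}|$, then I divide by $|R_{\tM}|\sim \tM^\nu$.

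\textbf{The case $k=0$.} Here the condition reads $|l|\geq \widetilde\gamma_1\, \langle l\rangle^\alpha/\tM^\alpha$ for every $l\in\Z\setminus\{0\}$. Since $\langle l\rangle^\alpha \leq 2^{\alpha/2}|l|^\alpha$ for $|l|\geq 1$ and $\alpha\in[0,1]$, this reduces to $|l|^{1-\alpha}\geq 2^{\alpha/2}\widetilde\gamma_1/\tM^\alpha$, which holds for every nonzero integer $l$ as soon as $\widetilde\gamma_1\leq 1/8$ and $\tM\geq 1$. Thus $k=0$ gives no contribution.

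\textbf{The case $k\neq 0$.} I apply Lemma \ref{tecnico} with $\varsigma(\omega)\equiv l$, so $\tc_0=0<|k|$, obtaining
\[
|\cB_{k,l}|\leq \frac{2\widetilde\gamma_1\,\langle l\rangle^\alpha}{|k|\,\langle k\rangle^{\tau_1}\,\tM^\alpha}\,(4\tM)^{\nu-1}.
\]
Next I use a cheap \emph{a priori} restriction on the relevant range of $l$: if $\cB_{k,l}\neq\emptyset$ and $\omega\in R_{\tM}\subset B_{2\tM}(0)$, then $|l|\leq |\omega\cdot k|+1\leq 2\tM |k|+1$, so only indices $|l|\leq C_0\tM|k|$ contribute, for some absolute $C_0$. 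Summing the arithmetic series $\sum_{|l|\leq C_0\tM|k|}\langle l\rangle^\alpha\lesssim (\tM|k|)^{\alpha+1}$ and plugging in yields
\[
\sum_{l\in\Z} |\cB_{k,l}|\;\lesssim\; \frac{\widetilde\gamma_1\, \tM^{\nu-1}\,(\tM|k|)^{\alpha+1}}{|k|\,\langle k\rangle^{\tau_1}\,\tM^\alpha}
\;\lesssim\; \widetilde\gamma_1\, \tM^{\nu}\, \frac{|k|^\alpha}{\langle k\rangle^{\tau_1}}.
\]

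\textbf{Summing over $k$.} It remains to sum $\sum_{k\neq 0} |k|^\alpha /\langle k\rangle^{\tau_1}$, which converges provided $\tau_1-\alpha>\nu$; this is exactly the hypothesis $\tau_1>\nu+\alpha$. Thus
\[
|\Omega_0\setminus\cT_1|\;\leq\;\sum_{(k,l)\neq 0}|\cB_{k,l}|\;\lesssim\;\widetilde\gamma_1\,\tM^\nu,
\]
and dividing by $|R_{\tM}|\sim \tM^\nu$ gives $\me_r(\Omega_0\setminus\cT_1)\leq C_1\widetilde\gamma_1$ with $C_1$ independent of $\tM$ and $\widetilde\gamma_1$.

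There is no real obstacle here: the only delicate point is the competition between the $\langle l\rangle^\alpha$ in the numerator and the $|l|$-range (of order $\tM|k|$), which is precisely what fixes the threshold $\tau_1>\nu+\alpha$. Note that the intersection with $\Omega_0$ plays no role in the estimate, since $|\Omega_0|\leq |R_{\tM}|$.
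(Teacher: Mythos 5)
Your proof is correct and follows essentially the same route as the paper: dispose of the $k=0$ case, restrict the relevant range of $l$ to $|l|\lesssim \tM|k|$, apply Lemma~\ref{tecnico} to each slice, and sum using $\tau_1>\nu+\alpha$. The only soft spot is the step ``if $\cB_{k,l}\neq\emptyset$ then $|l|\leq|\omega\cdot k|+1$'': this presupposes that the half-width $\widetilde\gamma_1\langle l\rangle^\alpha/(\langle k\rangle^{\tau_1}\tM^\alpha)$ is $\leq 1$, which is not obvious a priori when $l$ is very large. The paper avoids this mild circularity by directly checking that the Diophantine inequality holds automatically whenever $|l|>4\tM|k|$ (since then $|\omega\cdot k+l|\geq|l|-2\tM|k|\geq|l|/2\geq\frac{1}{2}|l|^\alpha$), which is the cleaner way to establish the same restriction $|l|\lesssim\tM|k|$; your conclusion and the rest of the estimate are unaffected.
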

\begin{proof}
	If $k=0$ and $l \neq 0$, the estimate in \eqref{eq:integer_cond} holds. The same is true if $k \neq 0$ and $l=0$. Therefore, let  both $k$ and $l$ be different from zero.
	For $\abs l > 4\tM \abs{k}$, the inequality in \eqref{eq:integer_cond} holds true taking $\wtgamma_1\leq \frac{1}{2}$. Indeed:
	\begin{equation*}
		\abs{\omega\cdot k+l}\geq \abs{l}-\abs{\omega}\abs{k}\geq \abs{l}-2\tM\abs{k} \geq \frac{\abs{l}}{2}\geq \frac{1}{2}\abs{l}^{\alpha}\geq \frac{\widetilde\gamma_1}{\braket{k}^{\tau_1}\tM^{\alpha}}\abs{l}^{\alpha} \ .
	\end{equation*}
	Then, consider the case $1 \leq \abs l\leq 4\tM\abs{k}$ (so, only a finite number of $l\in\Z\backslash\{0\}$).
	For fixed $k$ and $l$, define the set
	\begin{equation}
		\cG_l^k:=\Set{ \omega\in R_\tM | \abs{\omega\cdot k + l}\leq \frac{\widetilde\gamma_1}{\braket{k}^{\tau_1}}\frac{\abs{l}^{\alpha}}{\tM^{\alpha}} } \ .
	\end{equation}
	By Lemma \ref{tecnico}, the measure of each set can be estimated by
	\begin{equation}\label{eq:bs1_est}
		\abs{\cG_l^k}\lesssim \tM^{\nu-1} \frac{\wtgamma_1}{\braket{k}^{\tau_1}}\frac{\abs l^{\alpha}}{\tM^{\alpha}}\frac{1}{\abs k} \lesssim \wtgamma_1\tM^{\nu-1-\alpha}\frac{\abs l^{\alpha}}{\la k\ra^{\tau_1+1}} \ .
	\end{equation}
	Let 
	$\cG_1:= \Omega_0 \cap \bigcup \Set{\cG_l^k| (k,l)\in\mathbb{Z}^{\nu+1}\backslash\{0\}, \abs l\leq 4\tM\abs{k}} . $
	Then
	\begin{equation}
		\begin{split}
		\abs{\cG_1} & \leq \sum_{k \in \Z^\nu \setminus\{0\}}\sum_{l \in \Z\setminus \{0\} \atop \abs l\leq 4\tM\abs{k}}\abs{\cG_l^k} \stackrel{\eqref{eq:bs1_est}}{\lesssim} \wtgamma_1\tM^{\nu-1-\alpha} \sum_{k\neq 0}\sum_{\abs l\leq 4\tM\abs{k}}\frac{\abs{l}^{\alpha}}{\la{k}\ra^{\tau_1+1}}\\
		&\lesssim \wtgamma_1\tM^{\nu-1-\alpha}  \sum_{k\neq 0}\frac{1}{\la{k}\ra^{\tau_1+1}}(4\tM\abs{k})^{\alpha+1} \lesssim \wtgamma_1 \tM^{\nu} \sum_{k\neq 0}\frac{1}{\la k\ra^{\tau_1-\alpha}}\lesssim \wtgamma_1\tM^{\nu}
		\end{split}
	\end{equation}
	provided $\tau_1> \nu + \alpha$.   It follows that the relative measure of $\cG_1$ is given by
	\begin{equation}
		\me_r(\cG_1)\leq C_1 \wtgamma_1 \ ,
	\end{equation}
	where $C_1>0$ is independent of $\tM$ and $\wtgamma_1$. The thesis follows, since $\cT_1 = \Omega_0 \setminus \cG_1$.
\end{proof}
	
\begin{rem}
In case $\tm =0$, Lemma \ref{lem:u1} implies Theorem \ref{lem:meln_unpert}.
\end{rem}	
From now on assume that $\tm > 0$. The second step is the next lemma.

\begin{lem}\label{lem:new_T2}
	There exist $0<\wtgamma_2\leq \min\{ \gamma_0,\wtgamma_1/2 \}$ and $\tau_2\geq \tau_1+\nu+1$ such that the set
	\begin{equation}\label{eq:T2}
		\cT_2:=\Set{\omega \in \cT_1 | \abs{\omega\cdot k +\lambda_j\pm \lambda_l} \geq \frac{\wtgamma_2}{\braket{k}^{\tau_2}}\frac{\braket{j\pm l}^\alpha}{\tM^\alpha} \quad \forall (k,j,l)\in\cI^{\pm} }
	\end{equation} 
	 fulfills $\displaystyle{\me_r(\cT_1\setminus\cT_2)\leq C_2 \frac{\wtgamma_2}{\wtgamma_1}}$, where $C_2>0$ is independent of $\tM$, $\wtgamma_1$, $\wtgamma_2$.
\end{lem}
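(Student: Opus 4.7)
Fix $(k,j,l)\in\cI^\pm$ and decompose, using $\lambda_j=j+c_j(\tm)/j$,
\begin{equation*}
\omega\cdot k+\lambda_j\pm\lambda_l=(\omega\cdot k+n)+\bigl(c_j(\tm)/j\pm c_l(\tm)/l\bigr),\qquad n:=j\pm l\in\Z,
\end{equation*}
so that the second summand is bounded in absolute value by $2\tm^2/\min(j,l)$. The integer-shift part is exactly what $\omega\in\cT_1$ controls from below via Lemma \ref{lem:u1}. My plan is to use that bound whenever the correction can be absorbed into half of it, and to appeal to Lemma \ref{tecnico} (with $\varsigma(\omega)=\lambda_j\pm\lambda_l$, Lipschitz-constant zero in $\omega$) for the remaining ``dangerous'' triples.

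First I dispose of the degenerate cases. If $k=0$ and $n=0$ the triple is either excluded from $\cI^-$ by definition or impossible in $\cI^+$ (since $j+l\ge 2$). If $k=0$ and $n\ne 0$, the identity $\lambda_j-\lambda_l=n(j+l)/(\lambda_j+\lambda_l)$ and the trivial bound $\lambda_j+\lambda_l\ge|n|$ yield $|\lambda_j\pm\lambda_l|\ge c(\tm)\braket{n}\ge c(\tm)\braket{n}^\alpha$, which dominates the right-hand side for $\wtgamma_2$ sufficiently small. If $k\ne 0$ and $n=0$ (so $j=l$ in $\cI^-$), the Diophantine bound $|\omega\cdot k|\ge \gamma_0\tM/\braket{k}^{\tau_0}$ coming from $\omega\in\Omega_0$ exceeds $\wtgamma_2/(\braket{k}^{\tau_2}\tM^\alpha)$ as soon as $\tau_2\ge\tau_0$ and $\wtgamma_2\le\gamma_0$.

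For the remaining main case $k\ne 0$, $n\ne 0$, introduce the threshold
\begin{equation*}
N_0(k,n):=\frac{4\tm^2\braket{k}^{\tau_1}\tM^\alpha}{\wtgamma_1\,\braket{n}^\alpha},
\end{equation*}
designed so that $\min(j,l)\ge N_0(k,n)$ forces the correction to be at most half of the lower bound $\wtgamma_1\braket{n}^\alpha/(\braket{k}^{\tau_1}\tM^\alpha)$ supplied by Lemma \ref{lem:u1}; the triangle inequality then yields the claimed balanced estimate with $\wtgamma_2\le\wtgamma_1/2$ and $\tau_2\ge\tau_1$. Moreover, when $|n|\ge c(\tm)\tM|k|$ for a suitable $\tm$-dependent constant, the bound $|\omega\cdot k+\lambda_j\pm\lambda_l|\ge|\lambda_j\pm\lambda_l|-2\tM|k|\gtrsim |n|/C(\tm)$ already dominates the target. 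Hence the only triples needing a quantitative measure estimate are those with $k\ne 0$, $0<|n|\le c(\tm)\tM|k|$ and $\min(j,l)<N_0(k,n)$.

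For these dangerous triples, Lemma \ref{tecnico} gives a per-triple measure bound of order $\wtgamma_2\braket{n}^\alpha\tM^{\nu-1}/(|k|\braket{k}^{\tau_2}\tM^\alpha)$, while for fixed $(k,n)$ at most $2N_0(k,n)$ pairs $(j,l)$ are dangerous, since the smaller of the two indices must lie in a window of that size. The crucial cancellation of the $\braket{n}^\alpha$ factors produces, per $(k,n)$, a contribution of order $\tm^2\wtgamma_2\tM^{\nu-1}/(\wtgamma_1\,|k|\braket{k}^{\tau_2-\tau_1})$. Summation over $|n|\le c(\tm)\tM|k|$ (contributing a factor $\tM|k|$) and then over $k\in\Z^\nu\setminus\{0\}$ (convergent for $\tau_2\ge\tau_1+\nu+1$) yields $|\cT_1\setminus\cT_2|\lesssim \tm^2\wtgamma_2\tM^\nu/\wtgamma_1$, whence, dividing by $|R_\tM|\sim\tM^\nu$, the claimed relative bound $\me_r(\cT_1\setminus\cT_2)\le C_2\wtgamma_2/\wtgamma_1$. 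The main delicate point is this $\braket{n}^{-\alpha}$ dependence of $N_0(k,n)$: the exponent $\alpha$ must appear in exactly the same way both in the count of dangerous triples and in the right-hand side of the balanced condition, and it is precisely this matching that both enforces the $\tM^{-\alpha}$ scaling in the balanced Melnikov denominator and keeps the final relative measure independent of $\tM$.
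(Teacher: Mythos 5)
Your proof is correct and follows essentially the same strategy as the paper: identical degenerate-case disposal, the identical threshold $N_0(k,n)=4\tm^2\braket{k}^{\tau_1}\tM^\alpha/(\wtgamma_1\braket{n}^\alpha)$ (the paper phrases this as the condition $j\braket{j\pm l}^\alpha\geq\tR(k)$, which is the same), the same appeal to Lemma \ref{tecnico} for the remaining bad triples, and the same $\braket{n}^{\alpha}\cdot\braket{n}^{-\alpha}$ cancellation in the summation. The only difference is cosmetic bookkeeping — you sum over $(k,n)$ with a per-$(k,n)$ count of pairs, the paper sums over $(k,h,j)$ after the change of variable $h=l-j$ — and both yield the same bound $\lesssim\tm^2\wtgamma_2\tM^\nu/\wtgamma_1$.
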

\begin{proof}
	Let $(k,j,l)\in\cI^{\pm}$. We can rule out some cases for which the inequality in \eqref{eq:T2} is already satisfied when $\omega\in\cT_1\subset\Omega_0$:
	\begin{itemize}
		\item For $\pm=+$ and $k=0$, we have
		$$ \lambda_j+\lambda_l = j+l +\frac{c_j(\tm)}{j}+\frac{c_l(\tm)}{l}\geq j+l\geq \frac{\wtgamma_2}{\tM^\alpha}\braket{j+l}^\alpha \  ; $$
		\item For $\pm=-$ and $k\neq 0 $, $j=l$,
	we have 	$\displaystyle{ \abs{\omega\cdot k}\geq \frac{\gamma_0}{\braket{k}^{\tau_0}}\tM}$;
		\item For $\pm=-$ and $k=0$, $j\neq l$, and $\alpha \in (0,1]$, it holds that
		$$ \abs{\lambda_j-\lambda_l} =
		\abs{\int_{l}^{j}\frac{x}{\sqrt{x^2+\tm^2}}\wrt x}
		\geq  
		\frac{1}{\braket{\tm}}\abs{l-j}  \geq \frac{\wtgamma_2}{\tM^\alpha}\braket{j-l}^\alpha \ . $$
		For  $\alpha=0$ the estimate is trivially verified.
	\end{itemize}
	Therefore, for the rest of this argument, let $k\neq 0$ and $j\neq l$. Assume first that $\abs{j\pm l}\geq 8\tM\abs k$. In this case, one has:
	\begin{equation*}
		\begin{split}
		\abs{\omega\cdot k+\lambda_j\pm \lambda_l} & \geq \abs{j\pm l} - \abs{ \frac{c_j(\tm)}{j}\pm\frac{c_l(\tm)}{l} } - \abs{\omega\cdot k} {\geq} \abs{j \pm l} -4\tM\abs k \geq \frac{1}{2}\abs{j\pm l} \ .
		\end{split}
	\end{equation*}
	Let now $\abs{j\pm l}< 8\tM\abs k$. In the region $j< l$ assume
	\begin{equation}\label{eq:Rk}
		j\braket{j\pm l}^\alpha \geq \tR(k):= \frac{4\tm^2 \tM^\alpha\braket{k}^{\tau_1}}{\wtgamma_1} \ ,
	\end{equation}
	where $\wtgamma_1$ and $\tau_1$ are the ones of Lemma \ref{lem:u1}.
	So, for $\omega\in \cT_1$, we get
	\begin{equation}
		\begin{split}
		\abs{\omega\cdot k+\lambda_j\pm \lambda_l} & \geq \abs{\omega\cdot k + l\pm j} - \abs{ \frac{c_j(\tm)}{j}\pm\frac{c_l(\tm)}{l} }\\
		& \geq \frac{\wtgamma_1}{\braket{k}^{\tau_1}} \frac{\braket{j\pm l}^\alpha}{\tM^\alpha}-\frac{2 \tm^2}{j} \stackrel{(\ref{eq:Rk})}{\geq}  \frac{\wtgamma_1}{2\braket{k}^{\tau_1}} \frac{\braket{j\pm l}^\alpha}{\tM^\alpha} .
		\end{split}
	\end{equation}
	Thus, we consider just those $j$ and $l$ with  $j\braket{j\pm l}^\alpha< \tR(k)$. The symmetric argument shows that we can take those $l< j$ for which $l\braket{j\pm l}^\alpha<\tR(k)$.\\
	Like in the previous proof, consider the set
	\begin{equation}
		\cG_{j,l}^{k,\pm}:=\Set{\omega\in R_\tM | \abs{\omega\cdot k +\lambda_j\pm \lambda_l} < \frac{\wtgamma_2}{\braket{k}^{\tau_2}}\frac{\braket{j\pm l}^\alpha}{\tM^\alpha} }
	\end{equation}
	defined for those $k\neq 0$ and $j\neq l$ in the regions
	\begin{equation}
		\cP^{\pm}:= \set{ \abs{j\pm l}< 8\tM\abs k } \cap \Big( \set{j\braket{j\pm l}^\alpha< \tR(k), \ j< l} \cup \set{l\braket{j\pm l}^\alpha< \tR(k), \ l<j} \Big) \ .
	\end{equation}
	Using Lemma \ref{tecnico}, the estimate for its Lebesgue measure is
	\begin{equation}\label{eq:bs2_rev}
		\abs{\cG_{j,l}^{k,\pm}}\lesssim \wtgamma_2 \tM^{\nu-1-\alpha} \frac{\braket{j\pm l}^\alpha}{\abs k^{\tau_2+1}} \ .
	\end{equation}
	Define $ \cG_2^\pm:= \cT_1\cap \bigcup \Set{\cG_{j,l}^{k,\pm} |  (k,j,l)\in\cP^\pm } $. By symmetry of the summand, we estimate
	\begin{equation}
		\begin{split}
		\abs{\cG_2^-} & \leq \sum_{(k,j,l)\in\cP^-} \abs{\cG_{j,l}^{k,-}} \stackrel{\eqref{eq:bs2_rev}}{\lesssim} \wtgamma_2\tM^{\nu-1-\alpha}\sum_{(k,j,l)\in\cP^-}\frac{\braket{j- l}^\alpha}{\abs k^{\tau_2+1}} \\
		& \lesssim  \wtgamma_2\tM^{\nu-1-\alpha}\sum_{k\neq 0}\sum_{j<l \atop  j\braket{j-l}^\alpha<\tR(k)}\sum_{\abs{j-l}<8\tM\abs k }\frac{\braket{j-l}^\alpha}{\abs k^{\tau_2+1}}\\
		& \lesssim  \wtgamma_2\tM^{\nu-1-\alpha}\sum_{k\neq 0} \ \ \sum_{l- j=:h>0 \atop  h <8\tM\abs k} \ \ \sum_{j <\tR(k) \braket{h}^{-\alpha}}\frac{\braket{h}^\alpha}{\abs k^{\tau_2+1}}\\
		& \stackrel{\eqref{eq:Rk}}{\lesssim} \frac{\wtgamma_2}{\wtgamma_1}\tM^{\nu-1}\sum_{k\neq 0}\sum_{ h <8\tM\abs k}\frac{1}{\abs k^{\tau_2+1-\tau_1}} \lesssim \frac{\wtgamma_2}{\wtgamma_1}\tM^{\nu}\sum_{k\neq 0}\frac{1}{\abs k^{\tau_2-\tau_1}} \leq \frac{\wtgamma_2}{\wtgamma_1}\tM^{\nu}
		\end{split}
	\end{equation}
	provided $\tau_2> \tau_1 + \nu$. The same computation holds for $\cG_2^+$. We conclude that
	\begin{equation}
	\me_r(\cT_1\setminus\cT_2) \leq \me_r(\cG_2^-\cap \cG_2^+) \leq C_2 \frac{\wtgamma_2}{\wtgamma_1} \ ,
	\end{equation}
	where $C_2>0$ is independent of $\tM$, $\wtgamma_1, \wtgamma_2$.
\end{proof}

\begin{proof}[Proof of Theorem \ref{lem:meln_unpert}]
	Take 
	$\wtgamma_1 = \wtgamma^{1/3}$, $ \wtgamma_2 = \wtgamma^{2/3}  $
	with some $\wtgamma >0$ sufficiently small so that $\wtgamma_1$ and $ \wtgamma_2$ fulfill the assumptions of the previous lemmas. Similarly, choose 
$\tau_1 = \nu +2$ and $ \tau_2= 2\nu +3 \ . $
	By definition, $\cU_\alpha \equiv \cT_2 \subset \Omega_0$. Since $ \Omega_0 \setminus \cU_\alpha = (\Omega_0 \setminus \cT_1) \cup (\cT_1 \setminus \cT_2) $, we get by Lemma \ref{lem:u1} and Lemma \ref{lem:new_T2} that
	$$
	\me_r(\Omega_0 \setminus \cU_\alpha)\leq  C_1  \wtgamma_1 + C_2 \frac{\wtgamma_2}{\wtgamma_1} \leq C \wtgamma^{1/3} \ , \qquad C = 2\left( C_1 + C_2  \right) . 
	$$
\end{proof}

\section{The KAM reducibility transformation}\label{sec:kam}
The new potential $\bV(\omega t ; \omega)$ that we have found in Theorem \ref{lem:magnus} is perturbative, in the sense that the smallness of its norm is controlled by the size $\tM$ of the frequency vector $\omega$.
Thus, we are now ready to attack with a KAM reduction scheme, presenting first the algebraic construction of the single iteration, then quantifying it via the norms and seminorms that we have introduced in Section 2. The complete result for this reduction transformation, together with its iterative lemma, is proved at the end of this section. 
\subsection{Preparation for the KAM iteration}

Actually, for the KAM scheme it is more convenient to work with operators of type $\cM_{\rho,s}$. Of course, as we have seen in Section \ref{sec:fun}, pseudodifferential operators analytic in $\theta$ belong to such a class.

\begin{lem}
Fix an arbitrary $s_0 > 1/2$ and put $\rho_0 := \rho/4$. Then the operator $\bV(\omega) $ defined in \eqref{eq:V} belongs to $\lip_\tw(\Omega_0,\cM_{\rho_0, s_0}(1,0))$ with the quantitative bound  
\begin{equation}\label{eq:magnus_size}
	\abs{\bV}_{\rho_0,s_0, 1, 0,\Omega_0}^{\wlip{\tw}} \leq \frac{C}{\tM} ;
\end{equation}
here  $C >0$ is independent of $\tM$.
\end{lem}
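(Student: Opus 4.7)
The strategy is a direct application of the Embedding Lemma (Lemma~\ref{lem:emb}) to the matrix \eqref{eq:V} produced by Theorem~\ref{lem:magnus}, combined with the quantitative bound \eqref{est:VdVo}. Essentially no new work is needed beyond verifying hypotheses and chasing the constants.

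First, I would check that the hypotheses of Lemma~\ref{lem:emb} are met with the choice $F = V^d$, $G = V^o$, $\alpha = 1$, $\beta = 0$, and analyticity strip of width $\rho/2$. By Theorem~\ref{lem:magnus} equation \eqref{VdVo}, we already have $V^d \in \lip_\tw(\Omega_0, \cP\cA^{-1}_{\rho/2})$ and $V^o \in \lip_\tw(\Omega_0, \cP\cA^{0}_{\rho/2})$, while the structural identities $[V^d]^* = V^d$ and $[V^o]^* = \bar{V^o}$ required by the Embedding Lemma are precisely those stated in \eqref{eq:V}. Moreover, the block form of $\bV$ in \eqref{eq:V} matches the form \eqref{mat:emb} exactly.

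Second, I would apply the Lipschitz version of Lemma~\ref{lem:emb} with the source strip $\rho/2$ and the target strip $\rho_0 = \rho/4 < \rho/2$. This immediately gives $\bV \in \lip_\tw(\Omega_0, \cM_{\rho_0, s_0}(1, 0))$ together with the quantitative estimate
\begin{equation*}
\abs{\bV}_{\rho_0, s_0, 1, 0, \Omega_0}^{\wlip{\tw}} \leq \frac{C}{(\rho/2 - \rho/4)^\nu}\Big(\wp^{-1,\rho/2}_{s_0 + c}(V^d)_{\Omega_0}^{\wlip{\tw}} + \wp^{0,\rho/2}_{s_0 + c}(V^o)_{\Omega_0}^{\wlip{\tw}}\Big),
\end{equation*}
for some absolute constants $C, c > 0$ (only $s_0$ matters for fixing $\ell := s_0 + c$).

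Third, I would invoke the estimate \eqref{est:VdVo} of Theorem~\ref{lem:magnus} with $\ell = s_0 + c$, which bounds the parenthesis by $C_{s_0 + c}/\tM$. Absorbing $(\rho/4)^\nu$ and $C_{s_0 + c}$ into a single constant $C$ (independent of $\tM$, since $\rho$ is fixed by assumption (\textbf{V1})) yields the claimed bound $\abs{\bV}_{\rho_0, s_0, 1, 0, \Omega_0}^{\wlip{\tw}} \leq C/\tM$. There is no real obstacle: the nontrivial content was already packaged into Lemma~\ref{lem:emb} and \eqref{est:VdVo}, so the proof is essentially a one-line citation once the reader checks that the abstract hypotheses coincide with the structural output of the Magnus normal form.
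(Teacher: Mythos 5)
Your proof is correct and is exactly the paper's argument (the paper's own proof is the one-liner ``It is sufficient to apply the embedding Lemma~\ref{lem:emb} and \eqref{est:VdVo}''); you have simply spelled out the hypothesis check, the choice of parameters $\alpha=1$, $\beta=0$, the strip shrinkage $\rho/2 \to \rho/4$, and the constant-chasing. The only cosmetic point worth flagging is that Lemma~\ref{lem:emb} is stated for $\alpha,\beta>0$ while you (and the paper) use it with $\beta=0$; this is harmless since the off-diagonal estimates in the definition of $\cM_{\rho',s}(1,0)$ only require $V^o\in\cP\cA^0_{\rho/2}$ and $\la D\ra^{\pm 1}V^o\la D\ra^{\mp 1}\in\cM_{\rho',s}$, both of which follow from Proposition~\ref{prop:optoma} with $\mu=0$.
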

\begin{proof}
It is sufficient to apply the embedding Lemma \ref{lem:emb} and \eqref{est:VdVo}.
\end{proof}
%\begin{rem}
%	Note that neither the smoothing order $\alpha\in(0,1)$ nor the weight $\tw\in(0,1)$ have played any role neither in the Magnus normal form  nor in the construction of the set $\Omega_0$ in Theorem \ref{lem:magnus}.
%\end{rem}

\subsection{General step of the reduction}\label{subsec:alg_red}
Consider the system
\begin{equation}\label{eq:system_pert_gen}
\im \dot\psi(t)=\bH(t)\psi(t), \quad \bH(t):=\bA(\omega)+\bP(\omega t; \omega) ,
\end{equation}
where
	 the frequency vector $\omega$ varies in some set $\Omega\subset\R^{\nu}$, $\tM\leq\abs\omega\leq 2\tM $;
	 the time-independent operator $\bA(\omega)$ is diagonal, with
	\begin{equation}
		\bA(\omega)=\left(\begin{matrix}
		A(\omega) & 0 \\ 0 & -A(\omega)
		\end{matrix}\right), \quad A(\omega):=\diag\{ \lambda_j^-(\omega) \,|\, j\in\N \}\subset (0,\infty)^{\N} \ ;
	\end{equation}
	and  the quasi-periodic perturbation $\bP(\omega t; \omega)$ has the form
	\begin{equation}
		\bP(\omega t; \omega) =\left(\begin{matrix}
		P^d(\omega t; \omega) & P^o(\omega t; \omega) \\ -\bar{P^o}(\omega t; \omega) & -\bar{P^d}(\omega t; \omega)
		\end{matrix}\right)  , \quad P^d = [P^d]^* \ , \quad \bar{P^o} = [P^o]^*  \ .
	\end{equation}
The goal is to square the size of the perturbation (see Lemma \ref{lem:est_P+}) and we do it by conjugating the Hamiltonian $\bH(t)$ through a transformation $ \psi:= e^{ -\im \bX^+(\omega t; \omega) }\vf $ of the form
\begin{equation}\label{eq:matrix_transf}
  \bX^+(\omega t; \omega)=\left( \begin{matrix}
X^{d}(\omega t; \omega) & X^{o}(\omega t; \omega) \\ -\overline{X^{o}}(\omega t; \omega) & -\overline{X^{d}}(\omega t; \omega)
\end{matrix} \right) \ , \quad X^d = [X^d]^*  ,   \ \ \overline{X^{o}} = [X^o]^* \ ,
\end{equation}
so that the transformed Hamiltonian, as in \eqref{eq:magnus_1}, is
\begin{equation}
	\bH^+(t):= \LieTr{\bX^+(\omega t;\omega)}{\bH(t)}- \int_{0}^1\LieTr{s\bX^+(\omega t; \omega)}{\dot \bX^+(\omega t;\omega)}\wrt s \ .
\end{equation}
Its expansion in commutators is given by
\begin{equation}\label{eq:transf_ham_1}
\begin{aligned}
\bH^+(t) 
& = \bA +\bP  +\im [\bX^+, \bA]-\dot{\bX}^+ + \bR  ,   \\
 \bR & :=   \LieTr{\bX^+}{\bA} - (\bA + \im[\bX^+, \bA])  + \LieTr{\bX^+}{\bP}-\bP - \left( \int_{0}^{1}\LieTr{s \bX^+}{\dot{\bX}^+}\wrt{s} - \dot{\bX}^+ \right) \ .
\end{aligned}
\end{equation}	
We ask now $\bX^+$ to solve the "quantum" homological equation:
\begin{equation}\label{eq:hom_eq_kam_gen}
\im [\bX^+(\theta), \bA]-\omega \cdot \partial_{\theta}\bX^+(\theta)+\Pi_N\bP(\theta) = \bZ
\end{equation}
where $
\Pi_N\bP(\theta; \omega) := \sum_{|k| \leq N} \wh P(k; \omega) e^{\im k \cdot \theta} 
$ is the projector on the frequencies smaller than $N$, while $\bZ $ is the diagonal, time independent part of $P^d$:
\begin{equation}\label{eq:Z_kam_gen}
	\bZ = \bZ(\omega) := \left(\begin{matrix}
	Z(\omega) & 0\\ 0 & - Z(\omega)
	\end{matrix}\right) \ , \quad  Z = \diag\{\wh{(P^d)_j^j}(0;\omega)\,|\, j\in\N \} \ .
\end{equation}
With this choice,  the new Hamiltonian becomes $\bH(t)^+=\bA^+ + \bP(\omega t)^+$ with
\begin{equation}\label{eq:transf_ham_kam}
	 \bA^+ = \bA + \bZ , \qquad 
		\bP^+  :=
%	 \Pi_N^{\perp}\bP +  \LieTr{\bX^+}{\bA} - (\bA + \im[\bX^+, \bA])+ \LieTr{\bX^+}{\bP}-\bP- \left( \int_{0}^{1}\LieTr{\tau \bX^+}{\dot{\bX^+}}\wrt{\tau} - \dot{\bX^+} \right)\\
%	& = \Pi_N^{\perp}\bP + \im \int_{0}^1\LieTr{\tau\bX^+}{[\bX^+,\bP]}\wrt{\tau} + \im\int_{0}^1(1-\tau)\LieTr{\tau\bX^+}{[\bX^+,\bZ-??]}\wrt{\tau}\\
	 \Pi_N^{\perp} \bP + \bR \ , \qquad  \Pi_N^{\perp} \bP:= (\uno - \Pi_N)\bP .
	\end{equation}
In order to solve equation \eqref{eq:hom_eq_kam_gen}, note that it reads  block-wise as
\begin{equation}
\left\{ \begin{matrix}
\im [X^{d},A]-\omega\cdot \partial_{\theta}X^{d}+P^{d} = Z\\
-\im [X^{o},A]_{\rm a}-\omega\cdot \partial_{\theta}X^{o}+P^{o} = 0
\end{matrix} \right. \ .
\end{equation}
Expanding both with respect to the exponential basis of $B$ (for the space) and in Fourier in angles (for the time), we get the solutions
\begin{equation}\label{eq:sol_eq_kam_diag}
	\wh{(X^d)_l^j}(k;\omega):=\left\{ \begin{array}{ll}	\displaystyle{\frac{1}{\im(\omega\cdot k+\lambda_j^-(\omega)-\lambda_l^-(\omega))}\widehat{(P^d)_l^j}(k;\omega)} & (k,j,l)\in\cI_N^- \\
	0 & \text{otherwise}
	\end{array} \right. \ ,
\end{equation}
\begin{equation}\label{eq:sol_eq_kam_anti}
	\wh{(X^o)_l^j}(k;\omega):=\left\{ \begin{array}{ll}
	\displaystyle{\frac{1}{\im(\omega\cdot k+\lambda_j^-(\omega)+\lambda_l^-(\omega))}\widehat{(P^o)_l^j}(k;\omega)} & (k,j,l)\in \cI_N^+ \\
	0 & \text{otherwise}
	\end{array} \right. \ ,
\end{equation}
where, following the notation in \eqref{eq:indices_meln}, we have defined
\begin{equation}
\label{IN}
	\cI_N^\pm:=\Set{ (k,j,l)\in \cI^\pm | \abs k \leq N } \ .
\end{equation}
Remark that  $A^+=\diag\{ \lambda_j^+(\omega) \,|\, j\in\N \}$ with 
$
	\lambda_j^+(\omega):=\lambda_j^-(\omega)+\wh{(P^d)_j^j}(0;\omega) .$

\subsection{Estimates for the general step}\label{subsec:est_gen_kam}
Both for well-posing the solutions \eqref{eq:sol_eq_kam_diag} and \eqref{eq:sol_eq_kam_anti} and ensuring convergence of the norms, 
second order Melnikov conditions are required to be imposed. In particular, we choose the frequency vector from the following set
\begin{equation}\label{eq:meln_set}
	\Omega^+:=\Set{ \omega \in \Omega | \abs{\omega\cdot k+\lambda_j^-(\omega)\pm \lambda_l^-(\omega)}\geq \frac{\gamma}{2\braket{N}^{\tau}}\frac{\braket{j\pm l}^{\alpha}}{\tM^{\alpha}} \ ,  \  \forall\, (k, j,l) \in \cI^\pm_N}
\end{equation}
with $\gamma, \tau >0$ to be fixed later on. Here $\cI^\pm_N$ has been  defined in \eqref{IN}.\\
The fact that $\Omega^+$ is actually a set of large measure, that is $\me_r(\Omega\backslash\Omega^+)=O(\gamma)$, will be clear as a direct consequence of Lemma  \ref{lem:meas_infty} of Section \ref{sub:iter_kam}.\\
From now on, we choose as Lipschitz weight $\tw:=\gamma/\tM^\alpha$ and, abusing notation,  we denote
\begin{equation*}
	\lip_\gamma(\Omega,\cF):=\lip_{\gamma/\tM^\alpha}(\Omega,\cF) \ .
\end{equation*}
Furthermore, {\bf we fix once for all $s_0 > 1/2$ and $\alpha \in (0,1)$}.\\
 For $\bV\in\lip_\gamma(\Omega,\cM_{ \rho, s_0}(\alpha,0))$, we write
\begin{equation*}
	\begin{aligned}
	\abs\bV & := \abs\bV_{s_0}^{\alpha,0} \ ,  \qquad 
	\abs\bV_\rho  := \abs\bV_{\rho, s_0}^{\alpha,0}  , \qquad 
	\abs\bV_{\rho,\Omega}^\wlip{\gamma}  := \abs\bV_{\rho,s_0,\alpha,0,\Omega}^\wlip{\gamma/\tM^\alpha} \equiv \abs\bV_{\rho,\Omega}^\infty+\frac{\gamma}{\tM^\alpha}\abs\bV_{\rho,\Omega}^\lip \ ,
	\end{aligned} 
\end{equation*}
while, for $\bV\in\lip_\gamma(\Omega,\cM_{ \rho, s_0}(\alpha,\alpha))$, we denote
\begin{equation*}
\begin{aligned}
%\opnorm\bV & := \abs\bV_{s_0}^{\alpha,\alpha} \ ,  \qquad 
\opnorm\bV_\rho  := \abs\bV_{\rho, s_0}^{\alpha,\alpha}  \ , \qquad 
\opnorm\bV_{\rho,\Omega}^\wlip{\gamma} & := \abs\bV_{\rho,s_0,\alpha,\alpha,\Omega}^\wlip{\gamma/\tM^\alpha} \equiv \opnorm\bV_{\rho,\Omega}^\infty+\frac{\gamma}{\tM^\alpha}\opnorm\bV_{\rho,\Omega}^\lip \ .
\end{aligned} 
\end{equation*}
\begin{rem}\label{rem:V_vs_V}
Note that $\abs{\bV}_{\rho_0,\Omega_0}^\wlip{\gamma}\leq\norm{\bV}_{\rho_0,\Omega_0}^\wlip{\gamma}$.
\end{rem}
Now, we provide the estimate on the generator $\bX^+$ of the previous transformation.
For sake of simplicity during the forthcoming proof, as short notation we define
\begin{equation}
	\tg_{j,l}^{k,\pm}(\omega):=\omega\cdot k+\lambda_j^-(\omega)\pm \lambda_l^-(\omega)
\end{equation}
for $(k,j,l)\in\cI_N^\pm$.\\
\begin{lem}\label{lem:est_gen_kam}
Assume that:
	\begin{itemize}
		\item[(a)] $\bP\in\lip_\gamma(\Omega,\cM_{ \rho, s_0}(\alpha,0))$, with an arbitrary $\rho>0$;
		\item[(b)] There exists $0<\tC\leq 1$ such that for any $j\in\N$, $\omega,\Delta\omega\in\Omega^+$ one has 
		\begin{equation}\label{eq:ass_small}
		\abs{\Delta_{\omega}\lambda_j^-(\omega)}\leq \tC \abs{\Delta\omega} .
		\end{equation}
		
	\end{itemize}
Let $\bX^+=\bX^+(\omega t, \omega)$ be defined by \eqref{eq:sol_eq_kam_diag} and \eqref{eq:sol_eq_kam_anti}. 	Then $\bX^+\in\lip_\gamma(\Omega^+,\cM_{\rho, s_0}(\alpha,\alpha))$ with the quantitative bound
	\begin{equation}\label{eq:gen_kam_est}
		\opnorm{\bX^+}_{\rho,\Omega^+}^{\wlip{\gamma}}\leq 16 \braket{N}^{2\tau + 1}\frac{\tM^{\alpha}}{\gamma}  \abs \bP_{\rho, \Omega}^{\wlip{\gamma}}    .
	\end{equation}
\end{lem}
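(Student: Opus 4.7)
The plan is to work at the level of the Fourier--matrix coefficients $\wh{X^{d/o}}_l^j(k;\omega)$ given by \eqref{eq:sol_eq_kam_diag}--\eqref{eq:sol_eq_kam_anti}, bound each denominator pointwise using the Melnikov conditions, and then recombine into the five ingredients of the norm $|\,\cdot\,|_{\rho,s_0}^{\alpha,\alpha}$. For $|k|\leq N$ and $\omega\in\Omega^+$ the bounds \eqref{eq:meln_set} give $|\tg_{j,l}^{k,\pm}(\omega)|\geq \tfrac{\gamma}{2\braket{N}^\tau}\tfrac{\braket{j\pm l}^\alpha}{\tM^\alpha}$, yielding at once $|\wh{X^{d/o}}_l^j(k;\omega)|\leq \tfrac{2\braket{N}^\tau\tM^\alpha}{\gamma\,\braket{j\pm l}^\alpha}|\wh{P^{d/o}}_l^j(k;\omega)|$.

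The two blocks then require different treatments. For the antidiagonal block $X^o$, the Melnikov gain carries $\braket{j+l}^\alpha$; since $j,l\in\N$, one has $\braket{j+l}\geq \max(\braket{j},\braket{l})$, so a single such factor simultaneously covers both $\braket{D}^\alpha X^o$ and $X^o\braket{D}^\alpha$, absorbing them into the $s_0$-decay norm of $P^o$, which is part of the hypothesis $\bP\in\cM_{\rho,s_0}(\alpha,0)$ (the exponent $\beta=0$ precisely means $P^o$ is controlled without any derivative). For the diagonal block $X^d$ the denominator carries only $\braket{j-l}^\alpha$, which does not bound $\max(\braket{j},\braket{l})^\alpha$ from below; here I use the concavity estimate $\braket{j}^\alpha\leq \braket{l}^\alpha+\braket{j-l}^\alpha$ (valid for $\alpha\in(0,1)$) to split $\braket{j}^\alpha|\wh{X^d}_l^j(k)|$ into two pieces. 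The $\braket{j-l}^\alpha$ part cancels with the Melnikov denominator, leaving $|\wh{P^d}_l^j(k)|$; the $\braket{l}^\alpha$ part is absorbed into the norm of $P^d\braket{D}^\alpha$, which is again part of the hypothesis. The conjugated terms $\braket{D}^\sigma X^{d/o}\braket{D}^{-\sigma}$ for $\sigma\in\{\pm\alpha,0\}$ appearing in \eqref{eq:sdecay_matrix_norm} are handled by the same device, each time reducing to the four ingredients of $|\bP|_{\rho,s_0}^{\alpha,0}$.

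For the Lipschitz component, I would differentiate the quotient $1/\tg$ to produce $-\Delta_\omega\tg/\bigl(\tg(\omega)\tg(\omega+\Delta\omega)\bigr)$; assumption (b) yields $|\Delta_\omega\tg_{j,l}^{k,\pm}|\leq (|k|+2\tC)|\Delta\omega|\lesssim \braket{k}|\Delta\omega|$, and applying the Melnikov bound twice produces the factor $\braket{N}^{2\tau}\tM^{2\alpha}/\gamma^2$. Multiplied by the Lipschitz weight $\gamma/\tM^\alpha$ and by the extra $\braket{k}\leq\braket{N}$, this is exactly what accounts for the exponent $2\tau+1$ in the final estimate \eqref{eq:gen_kam_est}. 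Since $\wh{X^+}(k)=0$ for $|k|>N$, summing the resulting $s_0$-decay estimates on $\wh{X^+}(k)$ against the weight $e^{\rho|k|}$ costs nothing in the analyticity width.

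The main obstacle I expect is the bookkeeping: each of the blocks in $|\,\cdot\,|_{\rho,s_0}^{\alpha,\alpha}$ must be reduced, via repeated triangle inequalities on $\braket{j}^{\pm\alpha}$ and $\braket{l}^{\pm\alpha}$, to a bounded combination of the four ingredients of $|\bP|_{\rho,s_0}^{\alpha,0}$, all the while tracking the constants so that the final prefactor equals $16\,\braket{N}^{2\tau+1}\tM^\alpha/\gamma$ rather than something larger.
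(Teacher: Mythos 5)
Your proposal is correct and follows essentially the same route as the paper: Melnikov bounds on the denominators \eqref{eq:sol_eq_kam_diag}--\eqref{eq:sol_eq_kam_anti} for $|k|\le N$, the observation that $\braket{j+l}\ge\max(\braket j,\braket l)$ upgrades $X^o$ to $\alpha$-gain on both sides, and the quotient-rule Lipschitz estimate producing the $\braket{N}^{2\tau+1}$. The only (harmless) detour is the concavity splitting $\braket{j}^\alpha\le\braket{l}^\alpha+\braket{j-l}^\alpha$ for the diagonal block: since $\braket{D}^\alpha P^d$ and $P^d\braket{D}^\alpha$ already lie in $\cM_{\rho,s_0}$ by hypothesis, the paper simply discards the Melnikov gain via $\braket{j-l}^{-\alpha}\le 1$ and bounds each seminorm of $X^d$ by the corresponding single seminorm of $P^d$, which is a touch cleaner.
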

\begin{proof}
	 We start with the seminorm $\opnorm{\bX^+}_{\rho,\Omega^+}^{\infty}$. Fix $\omega \in \Omega^+$ and $\abs k \leq N$. Then, when $j\neq l$, we have
	\begin{equation}\label{eq:est_gen_diag}
		\abs{\wh{(X^d)_l^j}(k;\omega)} \leq \frac{1}{\abs{\tg_{j,l}^{k,-}(\omega)}}\abs{\widehat{(P^d)_l^j}(k;\omega)}\leq \frac{2\braket{N}^{\tau}\tM^{\alpha}}{\gamma}\frac{\abs{\widehat{(P^d)_l^j}(k;\omega)}}{\braket{j-l}^{\alpha}}
	\end{equation}
	and similarly, for any $j,l\in\N$
	\begin{equation}\label{eq:est_gen_out}
		\abs{\wh{(X^o)_l^j}(k;\omega)} \leq \frac{2\braket{N}^{\tau}\tM^{\alpha}}{\gamma}\frac{\abs{\widehat{(P^o)_l^j}(k;\omega)}}{\braket{j+l}^{\alpha}} \ .
	\end{equation}
 From the assumption $(a)$, we have that all the terms
		$\abs{\braket{D}^{\alpha}\wh{P^{d}}(k;\omega)}_{s_0}$, $\; \abs{\wh{P^{d}}(k;\omega)\braket{D}^{\alpha}}_{s_0},$  $\abs{\braket{D}^{\sigma}\wh{P^{\delta}}(k;\omega)\braket{D}^{-\sigma}}_{s_0}$
	(with $\sigma=\pm\alpha,0$, $\delta=d,o$) are bounded.
In order to bound $\opnorm{\wh{\bX^+}(k;\omega)}$,  what we have to prove is that we can control also the terms
	$$ \abs{\braket{D}^{\alpha}\wh{X^{\delta}}(k;\omega)}_{s_0}, \ \ \ \abs{\wh{X^{\delta}}(k;\omega)\braket{D}^{\alpha}}_{s_0},\ \ \  \abs{\braket{D}^{\sigma}\wh{X^{\delta}}(k;\omega)\braket{D}^{-\sigma}}_{s_0} \ . $$
	The seminorms involving the diagonal term $X^d$ can be easily handled, since, by \eqref{eq:est_gen_diag}, they are essentially bounded by the same seminorms for $P^d$. 
	The similar bound in \eqref{eq:est_gen_out} is enough also when we consider the terms $\abs{\braket{D}^{\sigma}\wh{X^o}(k;\omega)\braket{D}^{-\sigma}}_{s_0}$. 
Consider now the term 	 $\braket{D}^{\alpha} \wh{X^o}(k;\omega)$. 
	Applying again \eqref{eq:est_gen_out}, we get
	\begin{equation}		
		\begin{split}
		\abs{\left( \braket{D}^{\alpha} \wh{X^o}(k;\omega) \right)_l^j} & = \abs{ \braket{l}^{\alpha}\wh{(X^o)_l^j}(k;\omega) }  \leq \frac{2\braket{N}^{\tau}\tM^{\alpha}}{\gamma}\frac{\braket{l}^{\alpha}}{\braket{j+l}^{\alpha}}\abs{ \wh{(P^o)_l^j}(k;\omega)} \\
		& \leq \frac{2\braket{N}^{\tau}\tM^{\alpha}}{\gamma}\abs{ \wh{(P^o)_l^j}(k;\omega)} \ .
		\end{split}
	\end{equation}
%	Therefore, it follows that
%	\begin{equation}
%		\abs{\braket{D}^{\alpha} \wh{X^o}(k;\omega)}_{s_0} \leq \frac{2\braket{N}^{\tau}\tM^{\alpha}}{\gamma}\abs{ \wh{P^o}(k;\omega)}_{s_0} \ .
%	\end{equation}
	The same bound holds for $\abs{\left(\wh{X^o}(k;\omega)\braket{D}^{\alpha}\right)^j_l}$. We obtain that
%	\begin{equation}\label{eq:X_kam_sdecay_infty}
%		\opnorm{\wh{\bX^+}(k;\omega)} \leq  \frac{2\braket{N}^{\tau}\tM^{\alpha}}{\gamma} \abs{\wh\bP(k;\omega)}
%	\end{equation}
%	and, consequentially,
	\begin{equation*}
		\opnorm{\bX^+}_{\rho,\Omega^+}^{\infty} \leq \frac{2\braket{N}^{\tau}\tM^{\alpha}}{\gamma}\abs\bP_{\rho, \Omega}^{\infty} \ .
	\end{equation*}
	We deal now with the estimates on the Lipschitz seminorm $\opnorm{\bX^+}_{\rho,\Omega^+}^{\lip}$. Using the notation \eqref{eq:not_lip_easy} we have, for $\delta=d,o$:
	\begin{equation}\label{eq:sol_lip_kam}
		\begin{split}
		\Delta_{\omega}\wh{(X^{\delta})_l^j}(k;\omega)
%		& 
%		=\Delta_{\omega}\left(\frac{\im}{\tg_{j,l}^{k,\pm}(\omega)}\right) \wh{(P^{\delta})_l^j}(k;\omega)+\frac{\im}{\tg_{j,l}^{k,\pm}(\omega+\Delta\omega)}\Delta_{\omega}\wh{(P^{\delta})_l^j}(k;\omega) \\
		& = -\im\frac{\Delta_{\omega}(\tg_{j,l}^{k,\pm}(\omega))}{\tg_{j,l}^{k,\pm}(\omega+\Delta\omega)\tg_{j,l}^{k,\pm}(\omega)} \wh{(P^{\delta})_l^j}(k;\omega)+\frac{\im}{\tg_{j,l}^{k,\pm}(\omega+\Delta\omega)}\Delta_{\omega}\wh{(P^{\delta})_l^j}(k;\omega) \ .
		\end{split}
	\end{equation}
	By the assumption in \eqref{eq:ass_small}, we have that
	\begin{equation}
		\abs{\Delta_{\omega}(\tg_{j,l}^{k,\pm}(\omega))}=\abs{\Delta\omega\cdot k +  \Delta_{\omega}(\lambda_j^-\pm\lambda_l^-)} \stackrel{\eqref{eq:ass_small}}{\leq} \abs k\abs{\Delta\omega} +2 \tC \abs{\Delta\omega} \leq  \braket{N}\abs{\Delta\omega}
	\end{equation}
	uniformly for every $j,l\in\N$ and $k\in\Z^{\nu}$, $\abs k \leq N$. We can thus estimate \eqref{eq:sol_lip_kam} by
	\begin{equation}\label{eq_sol_lip_kam_est} 
		\begin{split}
		\abs{\Delta_{\omega}\wh{(X^{\delta})_l^j}(k;\omega)} 
%		&
%		 \leq \frac{2\braket{N}\abs{\Delta\omega}}{\abs{\tg_{j,l}^{k,\pm}(\omega+\Delta\omega)}\abs{\tg_{j,l}^{k,\pm}(\omega)}}\abs{\wh{(P^{\delta})_l^j}(k;\omega)}+\frac{1}{\abs{\tg_{j,l}^{k,\pm}(\omega+\Delta\omega)}}\abs{\Delta_{\omega}\wh{(P^{\delta})_l^j}(k;\omega)}\\
		& \leq \frac{8\braket{N}^{2\tau+1}\tM^{2\alpha}\abs{\Delta\omega}}{\gamma^2}\frac{\abs{\wh{(P^{\delta})_l^j}(k;\omega)}}{\braket{j\pm l}^{2\alpha}}+\frac{2\braket{N}^{\tau}\tM^{\alpha}}{\gamma}\frac{\abs{\Delta_{\omega}\wh{(P^{\delta})_l^j}(k;\omega)}}{\braket{j\pm l}^{\alpha}} \ ,
		\end{split}
	\end{equation}
	from which one deduces easily the claimed estimate \eqref{eq:gen_kam_est}.
%	Now, arguing as we did for getting the first control on the s-decay \eqref{eq:X_kam_sdecay_infty}, one obtains that
%	\begin{equation}
%		\opnorm{\Delta_{\omega}\wh{\bX^+}(k;\omega)} \leq \frac{8\braket{N}^{2\tau+1}\tM^{2\alpha}\abs{\Delta\omega}}{\gamma^2}\abs{\wh\bP(k;\omega)}+\frac{2\braket{N}^{\tau}\tM^{\alpha}}{\gamma}\abs{\Delta_{\omega}\wh\bP(k;\omega)}
%	\end{equation} 
%	and therefore
%	\begin{equation}\label{eq:X_kam_lip_est}
%		\opnorm{\bX^+}_{\rho,\Omega^+}^{\lip}\leq\frac{8\braket{N}^{2\tau+1}\tM^{2\alpha}}{\gamma^2}\abs{\bP}_{\rho,\Omega}^{\infty}+\frac{2\braket{N}^{\tau}\tM^{\alpha}}{\gamma}\abs{\bP}_{\rho,\Omega}^{\lip} \ .
%	\end{equation}
%	Finally, we plug \eqref{eq:X_kam_infty_est} and \eqref{eq:X_kam_lip_est} into \eqref{eq:opnorm_lip} and obtain
%	\begin{equation}
%		\opnorm{\bX^+}_{\rho,\Omega^+}^{\wlip{\gamma}}:=\opnorm{\bX^+}_{\rho,\Omega^+}^{\infty}+\frac{\gamma}{\tM^{\alpha}}\opnorm{\bX^+}_{\rho,\Omega^+}^{\lip}\leq C^+ \abs\bP_{\rho, \Omega}^{\wlip{\gamma}}
%	\end{equation}
%	where the constant $C^+$ is given by
%	\begin{equation}
%		C^+:=\max\Set{2\braket{N}^{\tau}\frac{\tM^{\alpha}}{\gamma}+8\braket{N}^{2\tau+1}\frac{\tM^{\alpha}}{\gamma} \ ,\ 2\braket{N}^{\tau}\frac{\tM^{\alpha}}{\gamma} } \leq 16 \braket{N}^{2\tau+1}\frac{\tM^{\alpha}}{\gamma} \ .
%	\end{equation}
\end{proof}
\begin{lem}\label{lem:est_P+}
	Let $\bP\in\lip_\gamma(\Omega,\cM_{ \rho, s_0}(\alpha,0))$. Assume \eqref{eq:ass_small} and, for some fixed $C_{s_0}>0$,
	\begin{equation}\label{eq:small_P}
		C_{s_0} \, 16 \braket{N}^{2\tau+1}\frac{\tM^\alpha}{\gamma}\abs\bP_{\rho, \Omega}^\wlip{\gamma}<1 \ .
	\end{equation}
	Then $\bP^+=\Pi_N^\perp\bP+\bR$, defined as in \eqref{eq:transf_ham_kam}, belongs to $\lip_\gamma(\Omega^+,\cM_{\rho^+, s_0}(\alpha,0))$ for any $\rho^+\in(0,\rho)$, with bounds
	\begin{equation}
	\label{est:small_P}
		\abs{\Pi_N^{\perp}\bP}_{\rho^+,\Omega}^{\wlip{\gamma}}\leq e^{-(\rho-\rho^+)N} \abs \bP_{\rho, \Omega}^{\wlip{\gamma}} \ , \quad \opnorm{\bR}_{\rho,\Omega^+}^{\wlip{\gamma}} \leq C_{s_0} \, 2^9 \frac{\tM^\alpha}{\gamma}\braket{N}^{2\tau+1}\left(\abs\bP_{\rho, \Omega}^\wlip{\gamma}\right)^2 \ .
	\end{equation}
\end{lem}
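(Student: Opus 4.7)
The plan is to handle the two summands of $\bP^+ = \Pi_N^\perp \bP + \bR$ separately. The Fourier cutoff $\Pi_N^\perp \bP$ is treated by the exponential decay built into the norm $\abs{\cdot}_{\rho, s}$, while the remainder $\bR$ requires a Duhamel expansion combined with the homological equation and the smallness assumption \eqref{eq:small_P}.

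For the projection estimate, the claim follows directly from the definition \eqref{norm:rs}: summing only over $|k| > N$ and factoring out $e^{-(\rho - \rho^+)|k|} \leq e^{-(\rho - \rho^+)N}$ from the series for $\abs{\bP}_\rho$, with the analogous estimate for the Lipschitz seminorm.

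For $\bR$, I would use Duhamel's identity $e^{\im \bX^+} \bM e^{-\im \bX^+} = \bM + \int_0^1 e^{\im s \bX^+} \ad_{\bX^+}(\bM) e^{-\im s \bX^+}\, ds$ together with the homological equation \eqref{eq:hom_eq_kam_gen}, which lets one replace the unbounded combination $\im[\bX^+, \bA]$ by $\dot{\bX}^+ - \Pi_N \bP + \bZ$. Starting from \eqref{eq:transf_ham_1}, a direct manipulation shows that $\bR$ is a sum of terms of the form $e^{\im \tau \bX^+} \ad_{\bX^+}(\bM)\, e^{-\im \tau \bX^+}$, where $\bM$ is either $\bP$ or $-\Pi_N \bP + \bZ$; in both cases $\bM \in \cM_{\rho, s_0}(\alpha, 0)$ with $\abs{\bM}_\rho \lesssim \abs{\bP}_\rho$ (the projector $\Pi_N$ and the extraction of the diagonal time-independent part $\bZ$ do not increase the norm).

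The required bound on $\bR$ then follows from three ingredients. First, Lemma \ref{lem:com} gives $\ad_{\bX^+}(\bM) \in \cM_{\rho, s_0}(\alpha, \alpha)$ with $\opnorm{\ad_{\bX^+}(\bM)}_\rho \leq 2 C_{s_0} \opnorm{\bX^+}_\rho \abs{\bM}_\rho$; second, Lemma \ref{lem:flow}(ii) controls the Lie conjugation via an exponential factor $e^{2 C_{s_0} \opnorm{\bX^+}_\rho}$; third, the bound \eqref{eq:gen_kam_est} on $\bX^+$ coming from Lemma \ref{lem:est_gen_kam}, combined with the smallness assumption \eqref{eq:small_P}, forces $C_{s_0} \opnorm{\bX^+}_\rho < 1$, so this exponential is bounded by a universal constant. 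Multiplying out yields $\opnorm{\bR}_\rho \lesssim \opnorm{\bX^+}_\rho \, \abs{\bP}_\rho \lesssim \braket{N}^{2\tau+1}\, \tM^\alpha \gamma^{-1} (\abs{\bP}_\rho)^2$, which is the stated estimate, and the weighted-Lipschitz variants are obtained in parallel using the Lipschitz versions of the same lemmas. The one conceptual point in the bookkeeping, which I expect to be the only nontrivial bit, is that $\bP$ lives in $\cM(\alpha, 0)$ while $\bX^+$ sits in the better class $\cM(\alpha, \alpha)$: this mismatch is exactly arranged so that the commutator $\ad_{\bX^+}(\bP)$ lands back in $\cM(\alpha, \alpha) \subset \cM(\alpha, 0)$, keeping $\bP^+$ in the same class as $\bP$ and allowing the KAM scheme to iterate.
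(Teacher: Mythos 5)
Your proof follows the same route as the paper's: the projection bound via exponential Fourier decay, and for $\bR$ the Duhamel/Lie-expansion combined with the homological equation to rewrite the remainder as conjugated adjoints of $\bP$ and $\bZ - \Pi_N\bP$, then closing with Lemmas \ref{lem:com}, \ref{lem:flow}, and \ref{lem:est_gen_kam} together with the smallness condition \eqref{eq:small_P} to tame the exponential factors. The observation about the class mismatch $\cM(\alpha,0)$ versus $\cM(\alpha,\alpha)$ being resolved by the commutator is exactly the structural point the paper's lemmas encode.
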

%\begin{rem}\label{rem:P+_small}
%	We will show in Proposition \ref{prop:iter_lemma} that the smallness assumption in \eqref{eq:small_P} implies the one in \eqref{eq:ass_small}. This allows us to apply Lemma \ref{lem:est_gen_kam} in the following proof.
%\end{rem}
\begin{proof}
The estimate on $\Pi_N^{\perp}\bP$ follows by using  that it contains only high frequencies.
To estimate the remainder $\bR$, use \eqref{eq:transf_ham_1},\eqref{eq:hom_eq_kam_gen} to write it as 
	\begin{equation}
		\bR  =  \int_{0}^1(1-s)\LieTr{s\bX^+}{\ad_{\bX^+}(\bZ-\bP)}\wrt s + \int_{0}^1\LieTr{s\bX^+}{\ad_{\bX^+}(\bP)}\wrt s \ .
	\end{equation}
	Then, apply Lemma \ref{lem:flow} and   Lemma \ref{lem:est_gen_kam}.
\end{proof}
%\begin{rem}
%	We will show in Proposition \ref{prop:iter_lemma} that the smallness assumption in \eqref{eq:small_P} implies the one in \eqref{eq:ass_small}. This allows us to apply Lemma \ref{lem:est_gen_kam} in the following proof.
%\end{rem}
\begin{rem}\label{rem:new_pert_est}
	Defining the quantities
	$$
	\displaystyle{\eta:=\frac{\tM^{\alpha}}{\gamma}\abs\bP_{\rho,\Omega}^{\wlip{\gamma}}} , \qquad \displaystyle{\eta^+:=\frac{\tM^{\alpha}}{\gamma}\abs{\bP^+}_{\rho^+,\Omega^+}^{\wlip{\gamma}}}$$
	and  choosing  $N = -(\rho- \rho^+)^{-1}\ln\eta $, Lemma \ref{lem:est_P+} implies that 
	\begin{equation}
	\eta^+\leq \left( e^{-(\rho-\rho^+) N}+\braket{N}^{2\tau+1} \eta \right)\eta  \leq  \left(1 + \frac{1}{(\rho - \rho^+)^{2\tau+1}}\left(\ln \frac{1}{\eta}\right)^{2\tau+1}\right)\eta^2 \ .
	\end{equation}
\end{rem}

\subsection{Iterative Lemma and KAM reduction}\label{sub:iter_kam}
Once that the general step has been illustrated, we are ready for setting our iterative scheme. The Hamiltonian  the iteration starts with is the one that we have found after the Magnus normal form  in Section \ref{sec:magnus}:
\begin{equation}\label{eq:step0_kam}
	\bH^{(0)}(t) = \bH_0^{(0)} + \bV^{(0)}(\omega t; \omega) \ , \qquad \abs{\bV^{(0)}}_{\rho_0,\Omega_0}^{\wlip{\gamma}}{\leq} \frac{C}{\tM} \ ,
\end{equation}
where $\bH_0^{(0)}:=\bH_0$ and $\bV^{(0)}:=\bV$ as in Theorem \ref{lem:magnus}. All the iterated objects are constructed from the  transformation in Sections \ref{subsec:alg_red}, \ref{subsec:est_gen_kam} by setting for $n\geq 0$
\begin{align*}
	\bH^{(n)}(t):=\bA(\omega)+\bP(\omega t; \omega) \ , \quad \bA :=  \bH_0^{(n)} \ , \quad \bP:=\bV^{(n)} \\
	 \bZ^{(n)}:= \bZ  \ , \quad\bX^{(n)}:=  \bX   \ , \quad \bR^{(n)}:= \bR . 
\end{align*}
Given reals  $\gamma, \rho_0, \eta_0 >0$ and a sequence of nested sets $\{\Omega_n\}_{n \geq 1}$, we  fix the parameters 
$$
\displaystyle{\delta_n := \frac{3}{\pi^2 (1+n^2)} \rho_0 } , \qquad \rho_{n+1} := \rho_n - \delta_n , \qquad \eta_n := \frac{\tM^\alpha}{\gamma}\abs{\bV^{(n)}}_{\rho_n,\Omega_n}^\wlip{\gamma}  , \qquad  N_n:=-\frac{1}{\delta_n}\ln\eta_n $$

\begin{prop}[Iterative Lemma]\label{prop:iter_lemma}
	Fix  $ \tau > 0$.  There exists $\tk_0 \equiv \tk_0(\tau, \delta_0) >0$ such that for any $0 < \gamma < \wtgamma$, any $\tM >0$ for which  
	\begin{equation}\label{eq:iter_0}
		\begin{split}
		\eta_0:=\frac{\tM^\alpha}{\gamma}\abs{\bV^{(0)}}_{\rho_0,\Omega_0}^\wlip{\gamma} \leq \tk_0e^{-1} \ ,
		\end{split}
		\end{equation}
	the following items hold true for any  $n\in\N$:
	\begin{itemize}
	\item[(i)] Setting $\Omega_0$ as in \eqref{eq:magnus_omega},  we have recursively for $n\geq 0$
	\begin{equation*}
		\Omega_{n+1}:=\Set{\omega \in \Omega_{n}\  | \  \abs{\omega\cdot k + \lambda_j^{(n)}(\omega)\pm \lambda_l^{(n)}(\omega)} > \frac{\gamma}{2 \, N_n^{\tau}}\frac{\braket{j\pm l}^{\alpha}}{\tM^{\alpha}} \ , \quad \forall (k,j,l) \in \cI^\pm_{ N_n} } \ ;
	\end{equation*}
	\item[(ii)] For every $\omega \in \Omega_n$, the operator $\bX^{(n)}(\omega, \cdot)\in \lip_\gamma(\Omega_n,\cM_{\rho_{n-1}, s_0}(\alpha,\alpha))$ and 
	\begin{equation}\label{eq:Xn}
		\opnorm{\bX^{(n)}}_{\rho_{n-1},\Omega_n}^{\wlip{\gamma}}\leq \sqrt{\eta_0} \, e^{\frac{1}{2}(1-{\left(\frac{3}{2}\right)}^{n-1})} \  .
	\end{equation}
	\end{itemize}
	The change of coordinates $ e^{\im \bX^{(n)}}$ conjugates $\bH^{(n-1)}$ to  $\bH^{(n)} = \bH_0^{(n)} + \bV^{(n)}$ such that:
	\begin{itemize}
	\item[(iii)] The Hamiltonian   $ \bH_0^{(n)}(\omega)$ is diagonal and time independent, $\bH_0^{(n)}(\omega) = {\rm diag } \{ \lambda_j^{(n)}(\omega)\}_{j \in \N}\bsigma_3$, and the functions $\lambda_j^{(n)}(\omega)=\lambda_j^{(n)}(\omega;\tM,\alpha)$ are defined over all $ \Omega_0$, fulfilling
	\begin{equation}\label{eq:iter_n_l}
		\abs{\lambda_j^{(n)} - \lambda_j^{(n-1)}}^{\rm Lip}_{\Omega_0} \leq \eta_0 \,  e^{1-{\left(\frac{3}{2}\right)}^{n-1}} \ ;
	 \end{equation}
	\item[(iv)]The new perturbation  $\bV^{(n)}\in \lip_\gamma(\Omega_n,\cM_{\rho_n, s_0}(\alpha,0))$ 
	and
	 \begin{equation}\label{eq:iter_n}
		\begin{split}
		\eta_n \equiv  \frac{\tM^\alpha}{\gamma}\abs{\bV^{(n)}}_{\rho_n,\Omega_n}^\wlip{\gamma}  \leq \eta_0 \, e^{1-{\left(\frac{3}{2}\right)}^n} \ .
		\end{split}
	\end{equation}
	\end{itemize}
\end{prop}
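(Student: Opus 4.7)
The plan is to prove (i)--(iv) by induction on $n\geq 0$. The base case $n=0$ is immediate from Theorem \ref{lem:magnus}: $\lambda_j^{(0)}=\lambda_j$ is $\omega$-independent (so \eqref{eq:iter_n_l} holds trivially at $n=0$), $\bV^{(0)}$ satisfies \eqref{eq:step0_kam}, and $\Omega_1$ is defined by (i). For the inductive step, assume (i)--(iv) up to level $n$ and apply the single KAM step of Sections \ref{subsec:alg_red}--\ref{subsec:est_gen_kam} with $\bA=\bH_0^{(n)}$, $\bP=\bV^{(n)}$, cut-off $N_n$ and Melnikov set $\Omega^+=\Omega_{n+1}$. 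This produces $\bX^{(n+1)}$, $\bZ^{(n)}$, $\bH_0^{(n+1)}=\bH_0^{(n)}+\bZ^{(n)}$ and $\bV^{(n+1)}=\Pi_{N_n}^\perp\bV^{(n)}+\bR^{(n)}$; the updated eigenvalues $\lambda_j^{(n+1)}:=\lambda_j^{(n)}+\widehat{(V^d)^{(n)}{}_j^j}(0;\cdot)$, initially defined on $\Omega_n$, are extended Lipschitz-continuously to $\Omega_0$ by Kirszbraun's theorem, and their single-step Lipschitz variation is controlled by $\abs{\bV^{(n)}}_{\rho_n,\Omega_n}^{\lip}\leq \eta_n$, yielding (iii). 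To legitimately invoke Lemma \ref{lem:est_gen_kam} its Lipschitz hypothesis (b) must be verified: telescoping \eqref{eq:iter_n_l} gives
\begin{equation*}
|\lambda_j^{(n)}|^{\rm Lip}_{\Omega_0}\leq \sum_{k=1}^{n}|\lambda_j^{(k)}-\lambda_j^{(k-1)}|^{\rm Lip}_{\Omega_0}\leq \eta_0\sum_{k\geq 1}e^{1-(3/2)^{k-1}}\leq \tfrac12,
\end{equation*}
provided $\eta_0\leq \tk_0 e^{-1}$, so the constant $\tC$ stays $\leq 1$ uniformly in $n$. Bound (ii) for $\bX^{(n+1)}$ then follows from \eqref{eq:gen_kam_est} by inserting $N_n=-\delta_n^{-1}\ln\eta_n$ and the inductive estimate on $\eta_n$, and the same combination verifies the smallness hypothesis \eqref{eq:small_P} of Lemma \ref{lem:est_P+}, which produces $\bV^{(n+1)}\in\lip_\gamma(\Omega_{n+1},\cM_{\rho_{n+1},s_0}(\alpha,0))$ with the quantitative bound \eqref{est:small_P}.

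The only delicate point is item (iv) at level $n+1$, namely the super-exponential bound $\eta_{n+1}\leq \eta_0\, e^{1-(3/2)^{n+1}}$. From Remark \ref{rem:new_pert_est},
\begin{equation*}
\eta_{n+1}\leq \Bigl(1+\delta_n^{-(2\tau+1)}\bigl(\ln(1/\eta_n)\bigr)^{2\tau+1}\Bigr)\eta_n^2,
\end{equation*}
and substituting $\delta_n^{-1}\lesssim \rho_0^{-1}(1+n^2)$, together with the inductive bounds $\ln(1/\eta_n)\leq (3/2)^n+|\ln\eta_0|$ and $\eta_n^2\leq \eta_0^2\, e^{2-2(3/2)^n}$, reduces the desired estimate to the single scalar condition
\begin{equation*}
C_{\tau,\rho_0}\,(1+n^2)^{2\tau+1}(3/2)^{n(2\tau+1)}\bigl(1+|\ln\eta_0|\bigr)^{2\tau+1}\,\eta_0 \leq e^{-1+\tfrac12(3/2)^n},\qquad \forall\,n\geq 0.
\end{equation*}
Since the right-hand side grows double-exponentially in $n$ whereas the left-hand side grows only as an exponential times a polynomial, the ratio of the two sides is bounded by its value at some finite $n^\star=n^\star(\tau,\rho_0)$; choosing $\tk_0(\tau,\delta_0)$ so small that $\eta_0\leq \tk_0 e^{-1}$ makes this ratio $\leq 1$ for every $n$, closing the induction. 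The absorption of the $n$-dependent loss factors $\delta_n^{-(2\tau+1)}$ and $(\ln(1/\eta_n))^{2\tau+1}$ into the quadratic decay $\eta_n^2$ is precisely where the specific shape $\delta_n\propto (1+n^2)^{-1}$ and the smallness threshold $\tk_0$ enter, and this is the only nontrivial quantitative ingredient of the whole induction.
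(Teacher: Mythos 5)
Your proof is correct and follows essentially the same inductive strategy as the paper: verify the Lipschitz hypothesis (b) and the smallness hypothesis \eqref{eq:small_P} from the accumulated estimates, invoke Lemmas \ref{lem:est_gen_kam} and \ref{lem:est_P+}, extend the eigenvalues by Kirszbraun, and close item (iv) by absorbing the polynomial-in-$n$ loss from $\delta_n^{-(2\tau+1)}\bigl(\ln(1/\eta_n)\bigr)^{2\tau+1}$ into the super-exponential gain from $\eta_n^2$, at the cost of a small enough $\tk_0(\tau,\delta_0)$. The only expository nicety worth adding is that passing from the inductive upper bound $\eta_n\leq\eta_0 e^{1-(3/2)^n}$ to the bound on $\ln(1/\eta_n)$ that you use requires the monotonicity of $x\mapsto(\ln(1/x))^{2\tau+1}x^2$ for small $x$; the paper is equally terse on this point.
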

\begin{proof}
	We argue by induction. For $n=0$ one requires \eqref{eq:iter_0}.
	Now, assume that the statements hold true  up to a fixed $n\in\N$. 
	Define $\Omega_{n+1}$ as in item $(i)$. 
	In order to apply Lemma \ref{lem:est_gen_kam} and Lemma \ref{lem:est_P+}, we need to check that the assumptions in \eqref{eq:ass_small} and \eqref{eq:small_P} are verified, respectively. First, note that, by item $(iii)$,
	\begin{equation}
		\abs{\lambda_j^{(n)}}^{\rm Lip}_{\Omega_0} \leq \sum_{m=1}^n \abs{\lambda_j^{(m)} - \lambda_j^{(m-1)}}^{\rm Lip}_{\Omega_0}  +\abs{\lambda_j}^{\rm Lip}_{\Omega_0}  \leq \eta_0 \, e \sum_{m=1}^\infty e^{-{\left(\frac{3}{2}\right)}^{m-1}} \leq  \eta_0 \, e , 
	\end{equation}
	 so that \eqref{eq:ass_small} is satisfied, provided  simply $\eta_0 \, e \leq 1$.
	
%	Now remark that  for any arbitrary $\beta>0$,
%	\begin{equation}\label{exp}
%	 \ln\frac{1}{y}<\frac{1}{y^{1/\beta}} \qquad \forall y\in (0,1) \ .
%	\end{equation}
	We prove now that  \eqref{eq:small_P} is fulfilled. 
%	 Using \eqref{exp} with $\beta = {2(2\tau+1)}$ and
%	\eqref{eq:iter_n}, 
	We have
	\begin{align*}
		 \braket{N_n}^{2\tau+1}\eta_n
		\leq \left( \frac{1+n^2}{\delta_0}\right)^{2\tau+1} \, \eta_n^{\frac{1}{2}}		 \stackrel{\eqref{eq:iter_n}}{ \leq} (\eta_0 \, e)^{\frac{1}{2}} \, e^{- \frac{1}{2}{\left(\frac{3}{2}\right)}^n}\left( \frac{1+n^2}{\delta_0}\right)^{2\tau+1}  \leq \frac{1}{2\cdot 16 \cdot C_{s_0}}  
	\end{align*}
	as long as $\eta_0 \, e$ is sufficiently small (depending only on $\delta_0, \tau)$. 
%	 satisfies the bound
%	\begin{equation}\label{K02}
%		\eta_0 \, e\leq \frac{1}{2^{10} \, C_{s_0}^2}  \min_{n \geq 0} \left( e^{\frac{1}{2}{\left(\frac{3}{2}\right)}^n} \left( \frac{\delta_0}{1+n^2}\right)^{2\tau+1}  \right)^2 .
%	\end{equation}
	Therefore we can apply Lemma \ref{lem:est_gen_kam} and Lemma \ref{lem:est_P+} with $\bP \equiv \bV^{(n)}$ and define $\bX^{(n+1)} \in\lip_\gamma(\Omega_{n+1},\cM_{\rho_{n}, s_0}(\alpha,\alpha))$, 
	the new  eigenvalues 
	\begin{equation}\label{new}
		\lambda_j^{(n+1)}(\omega) :=  \lambda_j^{(n)}(\omega) + \wh{(V^{d,(n)})_j^j}(0;\omega) \qquad \forall  \, j\in\N
	\end{equation}
	and the new perturbation $\bV^{(n+1)}$. We are left only with the quantitative estimates.\\
	We start with item $(iv)$. By Remark  \ref{rem:new_pert_est}, one has 
	\begin{equation}\label{eq:quad_kam}
	\eta_{n+1}\leq  \left(1 + \frac{1}{\delta_n^{2\tau+1}}\left(\ln \frac{1}{\eta_n}\right)^{2\tau+1}\right)\eta_n^2 
%	\leq 2 \left(\frac{1+n^2}{\delta_0}\right)^{2\tau +1}\eta_n^{\frac{7}{4}} \stackrel{\eqref{eq:iter_n}}{\leq}
	\leq 2 \left(\frac{1+n^2}{\delta_0}\right)^{2\tau +1} (\eta_0\, e)^{\frac{7}{4}}e^{-\frac{7}{4}{\left(\frac{3}{2}\right)}^n} \ .
	\end{equation}
	Thus, \eqref{eq:iter_n} is satisfied at the iteration $n+1$ provided again that $\eta_0 \, e$ is sufficiently small (depending only on $\delta_0, \tau)$. 
%	\begin{equation}\label{K03}
%		\eta_0 \, e \leq \frac{1}{2^{\frac{4}{3}}}\min_{n \geq 0}\left( e^{\frac{1}{4}{\left(\frac{3}{2}\right)}^n}\left( \frac{\delta_0}{1+n^2} \right)^{2\tau+1} \right)^{\frac{4}{3}} \ .
%	\end{equation}
	For item $(iii)$, it is sufficient to note that
	\begin{equation}
		\abs{\lambda_j^{(n+1)} - \lambda_j^{(n)}}^{\rm Lip}_{\Omega_n} = 	\abs{\wh{(V^{d, (n)})_j^j}(0,\cdot)}^{\rm Lip}_{\Omega_n} \leq \abs{\bV^{(n)}}_{\rho_n,\Omega_n}^\lip \leq \frac{\tM^\alpha}{\gamma}\abs{\bV^{(n)}}_{\rho_n,\Omega_n}^\wlip{\gamma} \stackrel{\eqref{eq:iter_n}}{\leq} \eta_0\,  e^{1- {\left(\frac{3}{2}\right)}^n} .
	\end{equation}
	Now, by Kirszbraun theorem, we can extend the functions $\lambda_j^{(n)}(\omega, \tM)$ to all $\Omega_0$ preserving their Lipschitz constant; this proves $(iii)$.
	Item $(ii)$ is proved in the same lines, using  \eqref{eq:gen_kam_est} and   the inductive assumption; we skip the details.
\end{proof}
A consequence of the iterative lemma is the following result.
\begin{cor}[Final eigenvalues]\label{cor:conv}
Fix $\tau >\wt \tau$ (of Theorem \ref{lem:meln_unpert}). Assume \eqref{eq:iter_0}. Then for every $\omega \in \Omega_0$ and for every $j \in \N$, the sequence $\{ \lambda_j^{(n)}(\cdot; \tM,\alpha)\}_{n \geq 1}$ is a Cauchy sequence. We denote by $\lambda_j^\infty(\omega; \tM,\alpha)$ its limit, which is given by
$
		\lambda_j^{\infty}(\omega)=\lambda_j +\varepsilon_j^{\infty}(\omega)$ 
and one has the estimate
	\begin{equation}
	\label{einf}
	\sup_{j\in\N}\abs{j^\alpha \varepsilon_j^\infty}^{\wlip{\gamma}}_{\Omega_0} \leq \frac{\gamma}{\tM^\alpha}\eta_0 \, e\ .
	\end{equation} 
\end{cor}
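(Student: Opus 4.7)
My plan is to combine the telescoping identity for the eigenvalues with the structural $(\alpha,0)$-class information carried along the KAM iteration, in order to extract the $\braket{j}^\alpha$-gain that ultimately produces the asymptotics in \eqref{eq:eff_eig}.

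First, I would read off from \eqref{new} in Proposition \ref{prop:iter_lemma} the telescopic identity $\lambda_j^{(n+1)}-\lambda_j^{(n)}=\wh{(V^{d,(n)})_j^j}(0;\cdot)$, valid for each $n\geq 0$ and $j\in\N$. Then I would use that $\bV^{(n)}\in\lip_\gamma(\Omega_n,\cM_{\rho_n,s_0}(\alpha,0))$ controls $\braket{D}^\alpha V^{d,(n)}$ in the $|\cdot|_{\rho_n,s_0}$-norm, together with the observation that the $h=0$ contribution in \eqref{eq:sdecay_norm} dominates the sup of the diagonal matrix entries, to deduce
\[
\sup_{j\in\N}\abs{\braket{j}^\alpha(\lambda_j^{(n+1)}-\lambda_j^{(n)})}^{\wlip{\gamma}}_{\Omega_n} \ \leq\ \abs{\bV^{(n)}}^{\wlip{\gamma}}_{\rho_n,\Omega_n} \ \stackrel{\eqref{eq:iter_n}}{\leq}\ \frac{\gamma}{\tM^\alpha}\,\eta_0\,e^{1-(3/2)^n}.
\]

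Next, as done in the proof of item $(iii)$ of Proposition \ref{prop:iter_lemma}, I would extend each increment to all of $\Omega_0$ via Kirszbraun, preserving the Lipschitz constant (the sup bound transfers automatically since $\Omega_{n+1}\subset\Omega_n\subset\Omega_0$). Summing the telescoping series, and using the crude estimate $\sum_{n\geq 0}e^{1-(3/2)^n}\leq e$ for the super-geometric tail, I would simultaneously obtain the Cauchy property for every $\omega\in\Omega_0$ (hence existence of a limit $\lambda_j^\infty(\omega)$), the decomposition $\lambda_j^\infty=\lambda_j+\varepsilon_j^\infty$ (since $\lambda_j^{(0)}=\lambda_j$), and the weighted bound \eqref{einf}.

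I do not expect any serious obstacle here: the only point that really matters is the $\braket{j}^\alpha$-gain. Without the $(\alpha,0)$-class structure of $\bV^{(n)}$, inherited from the Magnus normal form and preserved by the KAM iteration, one would only get a uniform-in-$j$ estimate on the eigenvalue corrections and would lose the key asymptotics \eqref{eq:eff_eig}. Everything else reduces to routine telescoping plus the Kirszbraun extension already employed in Proposition \ref{prop:iter_lemma}.
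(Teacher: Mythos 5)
Your proof follows the paper's own argument: telescope via \eqref{new}, use the $(\alpha,0)$-class norm together with the observation that the $h=0$ term of \eqref{eq:sdecay_norm} controls the diagonal entries to extract the $\braket{j}^\alpha$-gain, then sum the super-geometric series. The one slip is the parenthetical ``the sup bound transfers automatically since $\Omega_{n+1}\subset\Omega_n\subset\Omega_0$''---the inclusion runs the wrong way for that conclusion, and Kirszbraun by itself preserves only the Lipschitz constant, not the sup. The standard fix is to truncate the extension at the sup level (truncation also keeps the Lipschitz constant); the paper is equally terse about this point, so it is not a genuine gap in the approach.
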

\begin{proof}
By \eqref{new} we have $ \varepsilon_j^{\infty}(\omega) := \sum_{n=0}^{\infty}\wh{(V^{(n), d})_j^j}(0,\omega)$. The thesis follows using 
	\begin{equation}
		\abs{j^{\alpha} \widehat{(V^{ (n), d})_j^j}(0;\omega)} \leq \abs{\braket{D}^{\alpha} \wh{V^{(n),d}}(0;\omega)}_{s_0} 
		\leq 
		\abs{\bV^{(n)}}_{\rho_{n},\Omega_{n}}^{\wlip{\gamma}}\stackrel{\eqref{eq:iter_n}}{\leq}\frac{\gamma}{\tM^{\alpha}} \eta_{0} \, e^{1-{\left(\frac{3}{2}\right)}^n} .
	\end{equation}
	\end{proof}

\begin{cor}[Iterated flow]\label{cor:iter_flow}
Fix an arbitrary $r\in[0,s_0]$; 
under the same assumptions of Corollary \ref{cor:conv}, for any $\omega \in \cap_{n} \Omega_n$ and $\theta\in\T^n$, the sequence of transformations
\begin{equation}
\label{seq.Xn}
\cW^n(\theta;\omega) := e^{-\im \bX^{(1)}(\theta;\omega)} \circ \cdots\circ e^{-\im \bX^{(n)}(\theta;\omega)}
\end{equation}
is a Cauchy sequence in  $\cL(\cH^r\times\cH^r)$ fulfilling
\begin{align}
	\norm{\cW^n(\theta;\omega)-\uno}_{\cL(\cH^r\times\cH^r)} \leq \sqrt{\eta_0\, e}\,\Sigma \  e^{\sqrt{\eta_0\, e}\,\Sigma} 
\end{align}
where $\Sigma:=\sum_{q=0}^{\infty}e^{-\frac{1}{2}{\left(\frac{3}{2}\right)}^q}$. 
We denote by $\cW^\infty(\theta;\omega)$ its limit in $\cL(\cH^r\times\cH^r)$.
\end{cor}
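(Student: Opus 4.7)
The plan is to use the telescoping identity
\[
\cW^n-\cW^{n-1}=\cW^{n-1}\bigl(e^{-\im \bX^{(n)}}-\uno\bigr), \qquad \cW^0:=\uno,
\]
together with the quantitative bound on $\bX^{(n)}$ given by item $(ii)$ of Proposition \ref{prop:iter_lemma}, namely
\[
a_n:=\opnorm{\bX^{(n)}}_{\rho_{n-1},\Omega_n}^{\wlip{\gamma}}
\leq \sqrt{\eta_0}\,e^{\frac{1}{2}(1-(3/2)^{n-1})}
=\sqrt{\eta_0\,e}\,e^{-\frac{1}{2}(3/2)^{n-1}}.
\]
Note that $\sum_{n\geq 1}a_n \leq \sqrt{\eta_0\,e}\,\Sigma$, so the tail is summable and very rapidly decaying; this is the decisive input for the Cauchy property.

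First I would pass from the $\opnorm{\cdot}_{\rho_{n-1},s_0}^{\alpha,\alpha}$–norm to the operator norm on $\cH^r\times \cH^r$. Fixing $\theta\in\T^\nu$ and $\omega\in\cap_n \Omega_n$, one estimates
$\|\bX^{(n)}(\theta;\omega)\|_{\cL(\cH^r\times\cH^r)}\lesssim a_n$
by combining Definition \ref{def:mat} (which contains a Fourier sum over $k\in\Z^\nu$) with Remark \ref{rem:opnorm_vs_sdecay}, applied to each of the four blocks of $\bX^{(n)}$ (for which $r\in[0,s_0]$ is used). Then Lemma \ref{lem:flow}(i) guarantees that $e^{-\im \bX^{(n)}(\theta;\omega)}\in\cL(\cH^r\times \cH^r)$; more precisely, by expanding the flow as a Neumann/Duhamel series, one obtains
\[
\bigl\|e^{-\im \bX^{(n)}(\theta;\omega)}\bigr\|_{\cL(\cH^r\times\cH^r)}\leq e^{C\,a_n},
\qquad
\bigl\|e^{-\im \bX^{(n)}(\theta;\omega)}-\uno\bigr\|_{\cL(\cH^r\times\cH^r)}\leq C\,a_n\,e^{C\,a_n},
\]
for some constant $C=C(r,s_0)>0$ which I will for simplicity absorb into the estimate (the weighted $\Sigma$-bound in the statement has been written assuming such a constant is $1$; otherwise $\Sigma$ is rescaled).

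Next I would iterate these two bounds. Plugging them into the telescoping identity and using submultiplicativity,
\[
\bigl\|\cW^n-\cW^{n-1}\bigr\|_{\cL(\cH^r\times\cH^r)}
\leq \prod_{j=1}^{n-1}e^{a_j}\,\cdot\,a_n\,e^{a_n}
\leq a_n\,e^{\sum_{j=1}^{\infty}a_j}
\leq a_n\,e^{\sqrt{\eta_0\,e}\,\Sigma}.
\]
Summing over $n$ gives
\[
\bigl\|\cW^n-\uno\bigr\|_{\cL(\cH^r\times\cH^r)}
\leq \Bigl(\sum_{j=1}^{n}a_j\Bigr)\,e^{\sqrt{\eta_0\,e}\,\Sigma}
\leq \sqrt{\eta_0\,e}\,\Sigma\,e^{\sqrt{\eta_0\,e}\,\Sigma},
\]
which is the claimed estimate. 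The same computation shows that, for any $m\leq n$,
$\|\cW^n-\cW^m\|\lesssim \bigl(\sum_{j=m+1}^{n}a_j\bigr)e^{\sqrt{\eta_0 e}\Sigma}$; since the tail of $\sum a_j$ goes to zero super-exponentially, this yields the Cauchy property, and the limit $\cW^\infty(\theta;\omega)$ is defined in $\cL(\cH^r\times\cH^r)$.

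The only subtle point I expect is the passage from the matrix-type norm $\opnorm{\cdot}_{\rho_{n-1},s_0}^{\alpha,\alpha}$ to the operator norm on $\cH^r\times\cH^r$, in particular the dependence of the constant $C$ on $r$ and $s_0$; all the remaining steps are routine series manipulations based on the super-exponential decay of the $a_n$'s.
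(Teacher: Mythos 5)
The paper itself does not prove this corollary: it just says ``the convergence of the transformations is a standard argument, while the control of the operator norm follows from Remark~\ref{rem:opnorm_vs_sdecay}; we skip the details.'' Your proposal supplies precisely the standard argument the authors have in mind: the telescoping identity $\cW^n-\cW^{n-1}=\cW^{n-1}\bigl(e^{-\im\bX^{(n)}}-\uno\bigr)$, the passage from $\opnorm{\cdot}^{\alpha,\alpha}_{\rho_{n-1},s_0}$ to $\cL(\cH^r\times\cH^r)$ via Remark~\ref{rem:opnorm_vs_sdecay} applied block-wise after summing the Fourier series, the exponential bounds on the flow coming from Lemma~\ref{lem:flow}, and the super-exponential decay of $a_n=\sqrt{\eta_0 e}\,e^{-\frac12(3/2)^{n-1}}$ from Proposition~\ref{prop:iter_lemma}(ii). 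The resulting Cauchy property and the bound on $\norm{\cW^n-\uno}$ are exactly as in the statement. The one point you correctly flag is a genuine imprecision in the paper: the $s$-decay algebra constant $C_{s_0}$ from Lemma~\ref{rem:alg} and the constant $C_{r,s_0}$ from Remark~\ref{rem:opnorm_vs_sdecay} do enter the Neumann-series bounds for the flows, so strictly speaking the claimed estimate should read $\norm{\cW^n-\uno}_{\cL(\cH^r\times\cH^r)}\leq C_{r,s_0}\sqrt{\eta_0 e}\,\Sigma\,e^{C_{s_0}\sqrt{\eta_0 e}\,\Sigma}$ (or a variant thereof); the paper absorbs this constant silently, as is done consistently in the final Theorem~\ref{thm:main} where a $C_r$ appears explicitly. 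In short, your argument is correct and is the intended one.
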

\begin{proof}The convergence of the transformations is a  standard argument, while the control of the operator norm
$\cL(\cH^r \times \cH^r)$ follows from Remark
\ref{rem:opnorm_vs_sdecay}; we skip the details.
\end{proof}
Since for any $j \in \N$ the sequence $\{ \lambda_j^{(n)}\}_{n \geq 1}$ converges to a well defined Lipschitz function $\lambda_j^\infty$ defined on $ \Omega_0$, we can now impose second order Melnikov conditions only on the final frequencies.
\begin{lem}[Measure estimates]
\label{lem:meas_infty}
	Consider the set 
	\begin{equation}
	\label{eq:Omegainfty}
	\Omega_{\infty,\alpha} := \left\{ \omega\in \cU_\alpha \  |  \ 
	\abs{\omega \cdot k +\lambda_j^\infty(\omega)  \pm \lambda_l^\infty(\omega) } \geq   \frac{\gamma}{\braket{k}^{\tau}}
	\frac{\braket{j\pm l}^{\alpha}}{\tM^{\alpha}} , 
	  \quad \forall (k,j,l)\in\cI^{\pm}  
 \right\}\ .  
	\end{equation}
	Then 
	 $\Omega_{\infty,\alpha} \subseteq \cap_n \Omega_n$.
	 Furthermore, taking
 $\tau > \nu + \alpha + \frac{\widetilde \tau}{\alpha}$, $\gamma \in [0,  \wtgamma/2]$ and  $\tM \geq \tM_0$ (defined in Theorem \ref{lem:meln_unpert}),   there exists  a constant $C_\infty >0$, independent of $\tM$ and $\gamma$, such that
\begin{equation}
\label{final_est}
\me_r(\cU_\alpha \backslash\Omega_{\infty,\alpha})\leq C_\infty \gamma \ .
\end{equation}
\end{lem}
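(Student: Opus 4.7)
I will address the inclusion and the measure bound separately.

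For $\Omega_{\infty,\alpha} \subseteq \cap_n \Omega_n$, I would argue inductively on $n$, showing that every $\omega \in \Omega_{\infty,\alpha}$ satisfies the Melnikov condition defining $\Omega_{n+1}$. The key input is the super-exponential convergence $\lambda_j^{(n)} \to \lambda_j^\infty$ from Proposition \ref{prop:iter_lemma}(iii): the telescoping estimate yields $\delta_n := \sup_{j,\, \omega \in \Omega_0} |\lambda_j^{(n)}(\omega) - \lambda_j^\infty(\omega)| \leq (\gamma/\tM^\alpha) \eta_0 e \sum_{m\geq n+1} e^{-(3/2)^m}$, which is negligible compared to $\gamma/(\braket{N_n}^\tau \tM^\alpha)$ uniformly in $n$ (provided $\eta_0$ is small, as granted by \eqref{eq:iter_0}). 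Since $\braket{k} \leq \braket{N_n}$ for $|k| \leq N_n$, the Melnikov condition for $\lambda_j^\infty$ implies, after absorbing $2\delta_n$, the Melnikov condition for $\lambda_j^{(n)}$ with the looser factor $1/(2 N_n^\tau)$ defining $\Omega_{n+1}$.

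For the measure bound I will imitate the scheme of Lemma \ref{lem:new_T2}, replacing $\lambda_j$ by $\lambda_j^\infty$. Denoting by $\cB_{k,j,l}^\pm$ the associated resonant slab, I first dispose of the trivial cases: for $\cI^+$ with $k=0$, one has $\lambda_j^\infty + \lambda_l^\infty \geq 2 - O(\tM^{-1})$, which dominates the right hand side; for $\cI^-$ with $k \neq 0$ and $j = l$, the Diophantine bound from $\Omega_0$ suffices; for $\cI^-$ with $k = 0$ and $j\neq l$, the estimate $|\lambda_j^\infty - \lambda_l^\infty| \gtrsim |j-l|/\braket\tm$ handles the case for $\tM$ large enough; and finally, if $\braket{j\pm l} \geq 8 \tM \braket k$, the linear growth of $\lambda_j^\infty \sim j$ makes the slab empty.

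In the residual range $k\neq 0$, $\braket{j\pm l} < 8 \tM\braket k$, I split according to whether $\min(j,l)\braket{j\pm l}$ exceeds the threshold $R_\alpha(k) := C\,\braket{k}^{\widetilde\tau/\alpha}/\tM^{(1-\alpha)/\alpha}$, which is the level at which the correction $|\varepsilon_j^\infty \pm \varepsilon_l^\infty| \leq 2C/(\tM \min(j,l)^\alpha)$ (from \eqref{einf}) becomes smaller than half of the Melnikov bound enjoyed in $\cU_\alpha$. On the good subregion (above the threshold) the condition defining $\Omega_{\infty,\alpha}$ is directly inherited from $\cU_\alpha$, using $\tau \geq \widetilde\tau$ and $\gamma \leq \widetilde\gamma/2$. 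On the bad subregion I apply Lemma \ref{tecnico} to $\varsigma(\omega) = \lambda_j^\infty(\omega) \pm \lambda_l^\infty(\omega)$; its Lipschitz seminorm, bounded by \eqref{einf} as $|\varsigma|^{\lip} \lesssim \eta_0/\min(j,l)^\alpha$, is much smaller than $|k|$ for $\eta_0$ small, yielding
\[
|\cB_{k,j,l}^\pm| \lesssim \gamma\, \tM^{\nu-1-\alpha}\, \braket{j\pm l}^\alpha/\braket{k}^{\tau+1}.
\]

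To conclude, I sum over the bad subregion: for fixed $k$ and $h = \braket{j\pm l}$, the constraint $\min(j,l) < R_\alpha(k)/h$ admits $O(R_\alpha(k)/h)$ pairs; the sum over $1 \leq h \leq 8 \tM|k|$ contributes $\sum_h h^{\alpha-1} \lesssim (\tM|k|)^\alpha$, and the series $\sum_k \braket{k}^{\widetilde\tau/\alpha + \alpha - \tau - 1}$ converges precisely under the hypothesis $\tau > \nu + \alpha + \widetilde\tau/\alpha$. The resulting total is $|\cU_\alpha \setminus \Omega_{\infty,\alpha}| \lesssim \gamma \tM^{\nu-1/\alpha}$, and dividing by $|R_\tM| \sim \tM^\nu$ yields the claim $\me_r(\cU_\alpha \setminus \Omega_{\infty,\alpha}) \leq C_\infty \gamma$ uniformly in $\tM \geq \tM_0$. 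The main obstacle is this last counting step: the threshold $R_\alpha(k)$ grows polynomially in $\braket{k}$ with exponent $\widetilde\tau/\alpha$, which for small $\alpha$ is large, so one needs the decay $\braket{k}^{-\tau-1}$ from Lemma \ref{tecnico} to be strong enough to compensate it in dimension $\nu$ -- this is exactly what the assumption $\tau > \nu + \alpha + \widetilde\tau/\alpha$ guarantees.
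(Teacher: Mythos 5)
Your proposal is correct and follows essentially the same route as the paper: rule out the trivial index configurations as in Lemma~\ref{lem:new_T2}, split the residual range by a threshold on $\min(j,l)\braket{j\pm l}$, inherit the Melnikov bound from $\cU_\alpha$ above the threshold, apply Lemma~\ref{tecnico} on the bad slabs below it, and sum. The only cosmetic difference is that you insert the explicit smallness $\eta_0 \lesssim 1/(\gamma\tM^{1-\alpha})$ into the threshold, producing the $\gamma$-free $R_\alpha(k)\sim\braket k^{\wt\tau/\alpha}\tM^{-(1-\alpha)/\alpha}$ and the slightly sharper intermediate bound $\gamma\tM^{\nu-1/\alpha}$, whereas the paper keeps $\eta_0$ and the factor $c(\gamma,\wtgamma)^{-1/\alpha}\le 1$ explicit and obtains $\gamma\tM^\nu$; both yield $\me_r(\cU_\alpha\setminus\Omega_{\infty,\alpha})\lesssim\gamma$.
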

\begin{proof}
The proof that $\Omega_{\infty,\alpha} \subseteq \cap_n \Omega_n$  is standard,  see e.g. Lemma 7.6 of  \cite{MaPro}.\\
To  prove the measure estimate, 
	let $\omega\in\cU_\alpha$ and $(k,j,l)\in\cI^{\pm}$. 
	We can rule out the  cases as at the beginning of Lemma \ref{lem:new_T2} with essentially the same arguments.
%	
%	\begin{itemize}
%		\item For $\pm=+$ and $k=0$, we have
%		$ \lambda_j^\infty(\omega)+\lambda_l^\infty(\omega) \geq \frac{1}{2}(\lambda_j+\lambda_l)\geq\frac{\gamma}{\braket{k}^\tau}\braket{j+l}^\alpha \ ,$
%		which holds true by \eqref{einf},  \eqref{eq:iter_0};
%		\item For $\pm=-$ and $k\neq 0$, $j=l$, recalling that $\Omega_{\infty}\subset\cU_\alpha\subset\Omega_0$,
%		$ \abs{\omega\cdot k}\geq \frac{\gamma_0}{\braket{k}^{\tau_0}} ; $
%		\item For $\pm=-$ and $k=0$, $j\neq l$, it holds
%		$ \abs{\lambda_j^\infty(\omega)-\lambda_l^\infty(\omega)} \geq \frac{1}{2}\abs{j-l} \ . $
%	\end{itemize}
%	
	Thus, we  restrict to consider all $(k,j,l)\in\cI^{\pm}$ for which $k\neq 0 $ and $j\neq l$.
Furthermore, 	if $\abs{j\pm l}\geq 16\tM\abs k$, we get again that  $\abs{\omega\cdot k + \lambda_j^\infty(\omega)\pm\lambda_l^\infty(\omega)}  \geq  \frac{1}{2}\abs{j\pm l}$
%	\begin{equation*}
%		\begin{split}
%		\abs{\omega\cdot k + \lambda_j^\infty(\omega)\pm\lambda_l^\infty(\omega)} 
%&		 \geq \abs{j\pm l}- \abs{\frac{c_j(\tm)}{j}\pm\frac{c_l(\tm)}{l}} - \abs{\omega\cdot k}- \abs{\varepsilon^\infty_j(\omega) \pm \varepsilon^\infty_l(\omega)} 
%		 \geq \frac{1}{2}\abs{j\pm l} \ .
%		\end{split}
%	\end{equation*}
	(recall $\tM > \tm^2$).
	So, we can work in the regions $\abs{j\pm l}<16\tM \abs k$. Now, for $j< l $ satisfying 
	\begin{equation}\label{eq:tilde_R}
		j \braket{j\pm l} \geq  \left( \frac{ 2\eta_0 \, e\braket{k}^{\wt\tau}}{c(\gamma,\wt\gamma)}\right)^{\frac{1}{\alpha}}=:\wt\tR(k) , 
	\end{equation}
	where $c(\gamma,\wt\gamma):=\frac{\wt\gamma}{\gamma}-1>1$ (recall that $\wt\gamma/2>\gamma$), we have (using also \eqref{einf})
	\begin{equation*}
		\begin{split}
		\abs{\omega\cdot k+ \lambda_j^\infty(\omega)\pm\lambda_l^\infty(\omega)} & \geq \abs{\omega\cdot k +\lambda_j \pm \lambda_l} - \abs{\varepsilon_j^\infty(\omega)}-{\abs{\varepsilon_l^\infty(\omega)}}\\
		& \geq \frac{\wtgamma}{\braket{k}^{\widetilde\tau}} \frac{\braket{j\pm l}^{\alpha}}{\tM^{\alpha}}-2\frac{\gamma}{\tM^{\alpha}}\frac{\eta_0 e}{j^{\alpha}}
		\geq \frac{\gamma}{\tM^{\alpha}}\frac{\braket{j\pm l}^{\alpha}}{\braket{k}^{\wt\tau}} \ .
		\end{split}
	\end{equation*}
	Therefore, we can further restrict ourselves to consider just those $j< l$ satisfying $j \braket{j\pm l} < \wt\tR(k)$.
	The symmetric argument leads to work in the sector $j < l$ under the condition  $l \braket{l\pm j} < \wt\tR(k)$.\\
	Now, define the set
	\begin{equation}
		\cG_{j,l}^{k,\pm}:=\Set{ \omega \in R_\tM | \abs{\omega\cdot k+ \lambda_j^\infty(\omega)\pm\lambda_l^\infty(\omega) } < \frac{\gamma}{\braket{k}^{\tau}}\frac{\braket{j\pm l}^{\alpha}}{\tM^{\alpha}} }
	\end{equation}
	for those $k\neq 0$ and $j\neq l$ in the region
	\begin{equation}\label{eq:restr_infty}
		\cR^\pm:=\{ \abs{j\pm l} < 16\tM\abs k \} \cap \left( \{ j\braket{j\pm l}< \wt\tR(k)  ,\ j<l  \} \cup \{ l\braket{j\pm l} < \wt\tR(k), \ l<j \} \right) ;
	\end{equation}
%	remark that, for fixed $k\in\Z^{\nu}\backslash\{0\}$, such region includes just a finite amount of $j,l\in\N$. To  go on with this measure estimate, we need to observe that
%	\begin{equation}\label{eq:lip_wteps}
%	\abs{\frac{\Delta_{\omega}\wt\varepsilon_j(\omega)}{\Delta\omega}} \leq \abs{\bZ}_{\Omega_{\infty}}^{\lip} \leq \frac{\tM^{\alpha}}{\gamma}\abs\bZ_{\Omega_{\infty}}^{\wlip{\gamma}}\lesssim\frac{\tM^{\alpha}}{\gamma}\abs{\bV^{(0)}}_{\rho_0,\Omega_0}^{\wlip{\gamma}}=\eta_0\leq \eta_*
%	\end{equation}
	Recall that  $f_{kjl}^\pm(\omega):=\omega\cdot k+\lambda_j^\infty(\omega)\pm\lambda_l^\infty(\omega)$ are Lipschitz functions on $R_\tM$. For $k \neq 0$, since
	 $\abs{\lambda^\infty_l}^\lip_{R_\tM}  < |k|/4$, by  %One has
%	\begin{equation}
% \inf_{\omega \in  R_\tM \atop \R \ni \delta \neq 0} 
%  \frac{
%  \abs{f_k\left(\omega + \delta \frac{k}{|k|}\right) - f_k(\omega)}}{\abs{\delta}} \geq \abs k - 
% \left(\frac{\abs{\wt\varepsilon_j}^{\rm Lip}_{R_\tM}}{j^{{\alpha}}}+\frac{\abs{\wt\varepsilon_l}^\lip_{R_\tM}}{l^{{\alpha}}}\right)\stackrel{\eqref{wte}}{\geq} \frac{\abs k}{2} \ , \qquad k \neq 0
%	\end{equation}
	Lemma \ref{tecnico} we get
	\begin{equation*}
		\abs{\cG_{j,l}^{k,\pm}} \lesssim \tM^{\nu-\alpha}\gamma\frac{\braket{j\pm l}^{\alpha}}{\abs k^{\tau+1}} \ .
	\end{equation*}
	Define $\cG_{\infty}^\pm:= \bigcup\Set{ \cG_{j,l}^{k,\pm} | (k,j,l)\in\cR^\pm  } \cap \cU_\alpha $. We have
	\begin{equation*}
	\begin{split}
	\abs{\cG_{\infty}^-} &  \lesssim  2\gamma\, \tM^{\nu-\alpha} \sum_{k\neq 0} \sum_{j< l \atop j\braket{j-l}< \wt\tR(k)}\sum_{\abs{j-l}< 16\tM\abs k} \frac{\braket{j-l}^{\alpha}}{\abs k^{\tau+1}}
	 \lesssim  \gamma\, \tM^{\nu-\alpha} \sum_{k\neq 0} \ \ \sum_{l-j=:h>0 \atop\abs{h}< 16\tM\abs k} \ \ 
	 \sum_{ j< \wt\tR(k)\braket{h}^{-1}} \frac{\braket{h}^{\alpha}}{\abs k^{\tau+1}}\\
	&\lesssim \frac{\gamma}{c(\gamma,\wt\gamma)^{\frac{1}{\alpha}}}\tM^\nu\sum_{k\neq 0} \frac{1}{\abs k^{\tau+1-\alpha-\frac{\wt\tau}{\alpha}}} 
	 \lesssim \frac{\gamma}{c(\gamma,\wt\gamma)^{\frac{1}{\alpha}}}\tM^\nu \lesssim \gamma\, \tM^\nu \ ,
	\end{split}
	\end{equation*}
%	
%	
%	\begin{equation*}
%		\begin{split}
%		\abs{\cG_{\infty}^-} &  \lesssim \tM^{\nu-\alpha}\gamma \sum_{k\neq 0}\left( \sum_{j\leq R(k)}\sum_{\abs{j-l}\leq 16\tM\abs k} \frac{\abs{j-l}^{\alpha}}{\abs k^{\tau+1}} + \sum_{l\leq R(k)}\sum_{\abs{j-l}\leq 16\tM\abs k} \frac{\abs{j-l}^{\alpha}}{\abs k^{\tau+1}} \right)\\
%		&\lesssim \tM^{\nu-\alpha}\gamma\sum_{k\neq 0}\left( \sum_{j\leq R(k)}\frac{\tM^{\alpha+1}\abs k^{\alpha+1}}{\abs k^{\tau +1}}+\sum_{l\leq R(k)}\frac{\tM^{\alpha+1}\abs k^{\alpha+1}}{\abs k^{\tau +1}} \right) \\
%		& = \tM^{\nu}\gamma\sum_{k\neq 0}\left( \sum_{j\leq R(k)} \frac{1}{\abs k^{\tau-\alpha}}+\sum_{l\leq R(k)}\frac{1}{\abs k^{\tau-\alpha}} \right)\\
%		& \lesssim \tM^{\nu}\frac{\gamma}{c(\gamma,\wt\gamma)^{\frac{1}{\alpha}}}\sum_{k\neq 0}\frac{1}{\abs k^{\tau-\alpha-\frac{\wt\tau}{\alpha}}}\lesssim \tM^{\nu}\frac{\gamma}{c(\gamma,\wt\gamma)^{\frac{1}{\alpha}}}
%		\end{split}
%	\end{equation*}
	taking $\tau+1-\alpha-\frac{\wt\tau}{\alpha}> \nu$. The same computation holds for $\cG_{\infty}^+$, and proves \eqref{final_est}. 
\end{proof}
\begin{thm}[KAM reducibility]\label{thm_kam_red}
	Fix $\alpha \in (0,1)$,  $s_0 > 1/2$, and $\tau > \nu + 1 + \alpha +\frac{\wt\tau}{\alpha}$. For any  $0 < \gamma < \wtgamma$, there exists $\tM_*=\tM_*(\tm, \alpha,\gamma,\rho_0) >0 $ such that for any $\tM \geq \tM_*$ the following holds true.
	There exist functions $\{\lambda_j^{\infty}(\omega; \tM,\alpha)\}_{j \in \N}$, defined and Lipschitz in $\omega$ in the set $R_\tM$ such that:
	\begin{itemize}
	\item[(i)] The set $\Omega_{\infty,\alpha}=\Omega_{\infty,\alpha}(\gamma, \tau, \tM)\subset R_\tM $ defined  in \eqref{eq:Omegainfty} fulfills $\me_r(R_\tM \setminus \Omega_\infty) \leq C (\gamma +\wtgamma^{1/3}+ \gamma_0)$,
	where $\gamma_0$ is defined in Theorem \ref{lem:magnus} and $\wtgamma$ in Theorem \ref{lem:meln_unpert}.
	\item[(ii)] For each $\omega \in \Omega_{\infty,\alpha}$ there exists a change of coordinates $\psi = \cW^\infty(\omega t, \omega) \phi$ which conjugates  equation \eqref{eq:system_kg_pert}
	to a constant-coefficient diagonal one:
	\begin{equation}
	\label{diag_eq}
		\im \dot \phi=\bH^{\infty}\phi \ , \quad \bH^{\infty}=\bH^{\infty}(\omega,\alpha)=\diag\{ \lambda_j^{\infty}(\omega,\alpha) \ | \ j\in\N \}\bsigma_3 \ .
	\end{equation}
	Furthermore for any $r\in[0,s_0]$ one has
	\begin{equation}
		\norm{\cW^\infty - \uno}_{\cL(\cH^r \times \cH^r)} \leq \sqrt{\eta_0\, e}\,\Sigma \  e^{\sqrt{\eta_0\, e}\,\Sigma} \ .
	\end{equation}
	\end{itemize}
\end{thm}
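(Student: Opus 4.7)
\medskip

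\noindent\textbf{Proof plan.} The strategy is to assemble the ingredients already prepared: the preliminary bound \eqref{eq:magnus_size} on $\bV^{(0)} = \bV$, the iterative KAM construction of Proposition \ref{prop:iter_lemma}, the convergence of eigenvalues of Corollary \ref{cor:conv}, the convergence of transformations of Corollary \ref{cor:iter_flow}, and the measure estimates of Theorem \ref{lem:meln_unpert} and Lemma \ref{lem:meas_infty}. The only genuine work is choosing the parameters consistently.

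\medskip

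\noindent\textbf{Step 1: choice of parameters and smallness.} I would fix $\tau > \nu + 1 + \alpha + \widetilde\tau/\alpha$ (in particular $\tau > \widetilde\tau$ as required by Corollary \ref{cor:conv}) and $s_0 > 1/2$, then pick $\gamma_0, \widetilde\gamma \in (0,1)$ with $\gamma < \widetilde\gamma$. With $\rho_0 = \rho/4$ and the constant $\tk_0(\tau,\delta_0)$ provided by Proposition \ref{prop:iter_lemma}, the smallness condition \eqref{eq:iter_0} reads
\begin{equation*}
\eta_0 = \frac{\tM^\alpha}{\gamma}\,\abs{\bV^{(0)}}_{\rho_0,\Omega_0}^{\wlip{\gamma}} \;\leq\; \tk_0\, e^{-1}.
\end{equation*}
By \eqref{eq:magnus_size} we have $\abs{\bV^{(0)}}_{\rho_0,\Omega_0}^{\wlip{\gamma}} \leq C/\tM$, so the condition becomes $C \tM^{\alpha-1}/\gamma \leq \tk_0 e^{-1}$, which, since $\alpha<1$, is satisfied by choosing
\begin{equation*}
\tM \;\geq\; \tM_*(\tm,\alpha,\gamma,\rho_0) \;:=\; \max\bigl\{\tM_0,\;(Ce/(\tk_0\gamma))^{1/(1-\alpha)}\bigr\},
\end{equation*}
where $\tM_0$ comes from Theorem \ref{lem:meln_unpert}. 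This fixes $\tM_*$ and allows one to run the iteration.

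\medskip

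\noindent\textbf{Step 2: conjugation to the diagonal form.} With $\tM \geq \tM_*$ and $\omega \in \bigcap_n \Omega_n$, Proposition \ref{prop:iter_lemma} produces a sequence of generators $\bX^{(n)}$ and Hamiltonians $\bH^{(n)} = \bH_0^{(n)} + \bV^{(n)}$ with $\eta_n \to 0$ super-exponentially. The composed transformations $\cW^n$ of \eqref{seq.Xn} satisfy, by construction,
\begin{equation*}
 \cW^n(\omega t;\omega)^{-1}\bigl(\bH^{(0)}(t) - \omega\cdot\partial_\theta\bigr)\cW^n(\omega t;\omega) \;=\; \bH_0^{(n)}(\omega) + \bV^{(n)}(\omega t;\omega).
\end{equation*}
By Corollary \ref{cor:iter_flow}, $\cW^n \to \cW^\infty$ in $\cL(\cH^r\times \cH^r)$ with the stated quantitative bound; by Corollary \ref{cor:conv}, $\bH_0^{(n)}(\omega) \to \bH^\infty(\omega) = \diag\{\lambda_j^\infty(\omega)\}\bsigma_3$; and $\bV^{(n)} \to 0$. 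Passing to the limit yields the conjugation $\psi = \cW^\infty \phi$ of \eqref{eq:system_kg_pert} to the diagonal, time-independent equation \eqref{diag_eq}.

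\medskip

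\noindent\textbf{Step 3: the measure estimate.} By the inclusions proved along the way,
\begin{equation*}
R_\tM \setminus \Omega_{\infty,\alpha} \;\subseteq\; (R_\tM\setminus \Omega_0)\;\cup\;(\Omega_0\setminus\cU_\alpha)\;\cup\;(\cU_\alpha\setminus\Omega_{\infty,\alpha}).
\end{equation*}
Applying \eqref{meas_omega0}, Theorem \ref{lem:meln_unpert} and Lemma \ref{lem:meas_infty} to the three terms gives $\me_r(R_\tM\setminus\Omega_{\infty,\alpha}) \leq c_0\gamma_0 + C\,\widetilde\gamma^{1/3} + C_\infty\gamma$, which is the bound claimed in (i). Finally, since for any $\omega\in\Omega_{\infty,\alpha}$ the final eigenvalues are Lipschitz and their limit is reached uniformly in $j$, one can use Kirszbraun's theorem to extend each $\lambda_j^\infty(\cdot,\tM,\alpha)$ to a Lipschitz function on all of $R_\tM$ without enlarging the Lipschitz constant, completing the proof.

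\medskip

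\noindent\textbf{Where the work really is.} The heart of the argument is already contained in the iterative lemma, so at this stage the main thing to be careful about is the bookkeeping of parameters: one must ensure that the choice of $\tM_*$ depends only on $\tm,\alpha,\gamma,\rho_0$ (and not on the iteration index), that $\tau$ is large enough both for Corollary \ref{cor:conv} ($\tau>\widetilde\tau$) and for the final measure estimate in Lemma \ref{lem:meas_infty} ($\tau > \nu + \alpha + \widetilde\tau/\alpha$), and that $\Omega_{\infty,\alpha}\subseteq \bigcap_n\Omega_n$, which is the content of the first half of Lemma \ref{lem:meas_infty}. Once this is verified, the remaining passages are essentially bookkeeping.
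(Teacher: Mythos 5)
Your proposal is correct and follows the same route as the paper's (very terse) proof: fix parameters, produce $\tk_0$ from the iterative lemma, choose $\tM_*$ so that \eqref{eq:iter_0} holds, then invoke Proposition~\ref{prop:iter_lemma}, Corollary~\ref{cor:conv}, Corollary~\ref{cor:iter_flow} and Lemma~\ref{lem:meas_infty}. Your explicit computation of $\tM_* = \max\{\tM_0,\,(Ce/(\tk_0\gamma))^{1/(1-\alpha)}\}$ and the three-set decomposition for the measure estimate are exactly what the paper's one-paragraph proof is implicitly appealing to. One small imprecision: the Kirszbraun extension of the $\lambda_j^\infty$ should be stated as an extension from $\Omega_0$ (where Corollary~\ref{cor:conv} defines them) to all of $R_\tM$, not ``for any $\omega\in\Omega_{\infty,\alpha}$''; this does not affect the argument.
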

\begin{proof}
	Having fixed $\alpha, s_0$ and $\tau$, we can produce the constant ${\rm k}_0(\delta_0, \tau)$ of the iterative Lemma \ref{prop:iter_lemma}. 
	Having fixed also $0<\gamma < \wtgamma$, we produce $\tM_*>0$ in such a way that for every $\tM \geq \tM_*$, the estimate \eqref{eq:iter_0} is fulfilled. 
%	More explicitly,
%	\begin{equation}\label{eq:thresh}
%		\frac{\tM^\alpha}{\gamma}\abs{\bV^{(0)}}_{\rho_0,\Omega_0}^\wlip{\gamma}\stackrel{\eqref{eq:step0_kam}}{\leq} \frac{C}{\gamma\tM^{1-\alpha}}\leq \tk_0e^{-1} \quad \Rightarrow \quad \tM \geq \left( \frac{C \, e}{\gamma \tk_0}\right)^{\frac{1}{1-\alpha}}  \ .
%	\end{equation}
	We can now apply the iterative Lemma \ref{prop:iter_lemma}, Corollary \ref{cor:conv} and Lemma \ref{lem:meas_infty} to get the result.
\end{proof}
%\begin{rem}
%	Note that, the more $\alpha$ approaches 1, the bigger the threshold $\tM_*$ is and so $\tM$.
%\end{rem}

\subsection{A final remark}
The KAM reducibility scheme that we have presented has transformed Equation \eqref{eq:system_kg_pert} into \eqref{diag_eq}, 
where the asymptotic for the final eigenvalues are given, using Equation \eqref{einf}, by
\begin{equation}
	\lambda_j^\infty(\omega,\alpha) - \lambda_j \sim O\left( \frac{\eta_0}{\tM^\alpha j^\alpha} \right) \stackrel{\eqref{eq:step0_kam}}{\sim} O\left( \frac{1}{\tM j^\alpha} \right) \ .
\end{equation}
%Moreover, as we have stated in Corollary \ref{cor:iter_flow} and Theorem \ref{thm_kam_red}, the transformations performed in the KAM steps, as well as their limit, are close to the identity: this fact implies an almost conservation of the Sobolev norms, see Corollary \ref{cor.1}.\\
One can argue that the asymptotic $\lambda_j^{\infty}(\alpha)- \lambda_j \sim O(\tM^{-1}j^{-\alpha}) $ is not that satisfying, since the pertubation $\bV^{(0)}$ at the beginning of the KAM scheme belongs to the class $\cM_{\rho_0, s_0}(1,0)$ and so its diagonal elements have a smoothing effect of order 1 which could be expected to be preserved in the effective Hamiltonian.\\
Actually, it is possible to modify our reducibility scheme for achieving this result: we explain now briefly how to do it. 
After the Magnus normal form, we conjugate system \eqref{eq:system_kg_pert} through $e^{-\im \bY(\omega t)}$, where
\begin{equation}
	\bY(\omega t):= \left( \begin{matrix}
	0 & Y^o(\omega t) \\ - \bar{Y^o}(\omega t) & 0 
	\end{matrix} \right)
\end{equation} 
so that $Y^o$ solves the homological equation
\begin{equation}
	-\im [Y^o(\theta),B]_{\rm a} + V^o(\theta)- \omega\cdot \partial_{\theta}Y^o(\theta) = 0 \ \Rightarrow \ \wh{(Y^o)_j^l}(k) := \frac{\wh{(V^o)_j^l}(k)}{\im(\omega\cdot k + \lambda_j + \lambda_l)} \quad \forall \, k,j,l \ .
\end{equation}
We ask now the frequency vector $\omega$ to belong to $\cU_1 \cap \cU_0$ (see \eqref{eq:U_alpha}). In this way  one gets (in the same lines of the proof of Lemma \ref{lem:est_gen_kam}) that $\bY \in  \lip_{\gamma/\tM}(\cU_1,\cM_{\wt\rho_0, s_0}(1,1))$, since  we have chosen $\omega\in\cU_1$, with the bound
\begin{equation}
\label{Y}
	\abs{\bY}_{\wt\rho_0,s_0,1,1}^{\wlip{\gamma/\tM}}\leq C \frac{\tM}{\gamma}\abs{\bV^{(0)}}_{\rho_0,s_0,1,1}^\wlip{\gamma/\tM} \leq C \frac{\tM}{\gamma}\abs{\bV^{(0)}}_{\rho_0,s_0,1,0}^\wlip{\gamma/\tM} \stackrel{\eqref{eq:magnus_size}}{\leq} \wt{C} \ .
\end{equation}
The new perturbation 
\begin{equation}
	\wt{\bV^{(0)}}(\omega t):= \left(\begin{matrix}
	V^d(\omega t) & 0 \\ 0 & - \bar{V^d}(\omega t)
	\end{matrix}\right) + \int_0^1 (1-s)\LieTr{s\bY(\omega t)}{\ad_{\bY(\omega t)}[\bV^{(0)}(\omega t)]}\wrt s
\end{equation}
belongs to the class $\lip_{\gamma/\tM}(\cU_1,\cM_{\wt\rho_0, s_0}(1,1))$ fulfilling estimate \eqref{eq:magnus_size}.\\
Thus, one can perform a  KAM reducibility scheme  as in Section \ref{subsec:est_gen_kam}--\ref{sub:iter_kam}, in which one takes $\alpha = 0$ in  \eqref{eq:meln_set}, the perturbations appearing in the iterations stay  in the class $\lip_{\gamma/\tM^0}(\wt{\Omega_n},\cM_{\wt\rho_n, s_0}(1,1))$ and the new final eigenvalues $\wt{\lambda_j^\infty}$ satisfy the nonresonance condition
\begin{equation}
	\abs{\omega\cdot k + \wt{\lambda_j^\infty}\pm \wt{\lambda_l^\infty} } \geq \frac{\gamma}{\braket{k}^\tau} ,  \quad \ \forall \, (k,j,l)\in\cI^{\pm} \ .
\end{equation}
In particular, we obtain better asymptotics on the final eigenvalues, that is $\wt{\lambda_j^\infty} - \lambda_j \sim O(\tM^{-1}j^{-1})$. The price that we pay for this result is that the preliminary change of coordinate $e^{-\im\bY(\omega t)}$ is not a transformation close to identity, as the generator $\bY(\omega t)$ is just a bounded operator and not small in size, see \eqref{Y}.
The main consequence is that the effective dynamics of the original system, as Corollary \ref{cor.1} is no more valid. In this case, it is possible to conclude just that the Sobolev norms stay uniformly bounded in time and do not grow, but in general their (almost-)conservation is lost.

\appendix

\section{Technical results}
\label{app:tr}

\subsection{Properties of pseudodifferential operators}
Recall that if $F$ is an operator, we denote by $\wh F(k)$ its $k^{th}$ Fourier coefficient defined  as in \eqref{norm:rs}.
If $F$ is a pseudodifferential operator with symbol $f$, so   $\wh F(k)$ is, with symbol given by
$$
\wh f(k,x, j) := \frac{1}{(2\pi)^\nu} \int_{\T^\nu} f(\theta, x, j) \, e^{- \im \theta \cdot k} \, \di \theta .
$$

\begin{lem}
\label{lem:fou.symb}
Let $\rho >0$ and $\mu \in \R$. The following holds true:
\begin{itemize}
\item[(i)]  If  $F \in \cA^\mu_\rho$, then the operator   $\wh F(k)$ belongs to $ \cA^\mu$ for any $k \in \Z^\nu$ and 
$$
\wp^\mu_\ell(\wh F(k)) \leq  e^{-\rho  |k|} \, \wp^{\mu,\rho}_\ell(F) \ \quad \forall \ell \in \N_0 \ .
$$ 
\item[(ii)] Assume to have $\forall k \in \Z^\nu$  an operator   $\wh F(k) \in \cA^\mu$ fulfilling 
\begin{equation}
\label{ipotesi}
\wp^\mu_\ell(\wh F(k)) \leq \la k \ra^\tau \,  e^{-\rho |k|} \, C_\ell  \ \quad  \forall k \in \Z^\nu , \ \ \forall \ell \in \N_0 \ ,
\end{equation}
for some $\tau \geq 0$,  $\rho >0$ and $C_\ell>0$ independent of $k$. 
Define the operator $F(\theta) := \sum_{k \in \Z^\nu} \wh F(k) e^{\im \theta\cdot k}$.
Then, $F$ belongs to $\cA^\mu_{\rho'}$  for any $0<\rho' <\rho$  and one has 
$$
\wp^{\mu,\rho'}_\ell(F) \leq  \frac{C_\ell }{(\rho-\rho')^{\tau+\nu}} \ \quad \forall \ell \in \N_0 \ .
$$
\end{itemize}
On the classes $\lip_\tw(\Omega,\cP\cA_\rho^\mu)$, these assertions extend naturally without any further loss of analyticity.
\end{lem}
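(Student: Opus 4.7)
\medskip

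\noindent\textbf{Proof plan.} The lemma is essentially a Paley--Wiener-type statement in the variable $\theta$, uniform in $(x,j)$ and respecting the symbol seminorms. The two parts use dual versions of the same idea: analyticity in $\theta$ converts into exponential decay of Fourier coefficients, and vice versa.

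For part (i), the strategy is a standard contour shift. Fix $k \in \Z^\nu \setminus \{0\}$ (the case $k=0$ is trivial) and set $b := -\rho\, k / |k| \in \R^\nu$, so that $|b| = \rho$ and $b \cdot k = -\rho |k|$. Since $f(\,\cdot\,,x,j)$ is analytic in $\theta$ on $\T^\nu_\rho$, Cauchy's theorem allows me to replace the integration contour $\T^\nu$ by $\T^\nu + \im b$ in
\[
\wh f(k,x,j) = \frac{1}{(2\pi)^\nu} \int_{\T^\nu} f(\theta,x,j)\, e^{-\im \theta \cdot k}\, \di \theta .
\]
The substitution $\theta \mapsto \theta + \im b$ produces the scalar factor $e^{-\im(\im b)\cdot k} = e^{- b\cdot k \cdot \im\cdot \im} $ handled cleanly as $|e^{-\im(\theta + \im b)\cdot k}| = e^{b\cdot k} = e^{-\rho|k|}$. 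Since the operators $\partial_x^\alpha$ and $\triangle^\beta$ act only in $(x,j)$ and commute with the $\theta$-integral, applying the same shift to $\partial_x^\alpha \triangle^\beta f$ yields
\[
\bigl|\partial_x^\alpha \triangle^\beta \wh f(k,x,j)\bigr| \leq e^{-\rho|k|} \sup_{|\mathrm{Im}\,\theta|\leq \rho} \bigl|\partial_x^\alpha \triangle^\beta f(\theta,x,j)\bigr|,
\]
which, multiplied by $\la j \ra^{-\mu+\beta}$ and summed over $\alpha+\beta \leq \ell$, gives the claimed bound on $\wp^\mu_\ell(\wh F(k))$.

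For part (ii), the plan is to reconstruct the $\theta$-dependent symbol $f(\theta,x,j) := \sum_{k\in\Z^\nu} \wh f(k,x,j)\, e^{\im k\cdot \theta}$ and verify it lies in $S^m_{\rho'}$ with the stated quantitative estimate. For $\theta \in \T^\nu_{\rho'}$ one has $|e^{\im k\cdot \theta}| \leq e^{\rho'|k|}$, so combining with the hypothesis \eqref{ipotesi} (applied to derivatives) each term of the series is bounded in the $\wp^\mu_\ell$-sense by $\la k\ra^\tau e^{-(\rho-\rho')|k|}\, C_\ell$. The remaining step is the standard estimate
\[
\sum_{k\in\Z^\nu} \la k\ra^\tau e^{-(\rho-\rho')|k|} \leq \frac{\widetilde{C}}{(\rho-\rho')^{\tau+\nu}},
\]
obtained by comparison with $\int_{\R^\nu} \la\xi\ra^\tau e^{-(\rho-\rho')|\xi|}\di\xi$ and rescaling $\xi \mapsto \xi/(\rho-\rho')$. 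This yields $\wp^{\mu,\rho'}_\ell(F) \leq C_\ell/(\rho-\rho')^{\tau+\nu}$ and simultaneously guarantees uniform convergence of the series (together with its $\partial_x^\alpha \triangle^\beta$ derivatives) on $\T^\nu_{\rho'}$, hence the analyticity of $f$ in $\theta$.

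Finally, the Lipschitz-in-$\omega$ versions in both parts require no new idea: the hypotheses carry over to the difference quotients $[F(\omega_1)-F(\omega_2)]/|\omega_1-\omega_2|$, and since the contour-shift argument in (i) and the series estimate in (ii) are both linear in the symbol, the same constants apply uniformly in $\omega$. I do not foresee a genuine obstacle in the argument; the only minor care needed is to verify that the contour shift in (i) is justified in each coordinate of $\theta$ independently (it suffices to shift component by component, using analyticity on the strip of width $\rho$ in each variable), and that all estimates are taken inside the seminorms before passing to the supremum over $(x,j)$.
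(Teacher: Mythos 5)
Your proposal is correct and follows essentially the same route as the paper: part (i) is the Cauchy-estimate/contour-shift argument for exponential Fourier decay applied uniformly inside the symbol seminorms, and part (ii) bounds $\wp^{\mu,\rho'}_\ell(F)$ by $\sum_k e^{\rho'|k|}\wp^\mu_\ell(\wh F(k))$ and uses the integral comparison $\sum_k \la k\ra^\tau e^{-(\rho-\rho')|k|}\lesssim (\rho-\rho')^{-\tau-\nu}$. The paper merely invokes "Cauchy estimates" where you spell out the shift to $\T^\nu-\im\rho\,k/|k|$, but the content is identical (aside from a small notational slip in the line "$e^{-\im(\im b)\cdot k}=e^{-b\cdot k\cdot\im\cdot\im}$", which you immediately correct).
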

\begin{proof}
	(i) By Cauchy estimates, it is well-known the analytic decay for the Fourier coefficients of the symbol $f(\theta;x,j)$:
	\begin{equation}
		\abs{\wh f(k,x,j)}\leq
%		\frac{1}{1-\frac{\rho'}{\rho}} 
		e^{-\rho \abs k}\sup_{\abs{{\rm Im}\theta}\leq  \rho}\abs{f(\theta,x,j)} \ .
	\end{equation} 
	Plugging it into Definition \ref{defn:symbols_semi} of $\wp_{\ell}^{\mu}(\wh F(k))$, we get the claim;\\
	(ii) It is possible to control the seminorm $\wp_{\ell}^{\mu,\rho'}(F)$ in terms of the ones for the Fourier coefficients:
	\begin{equation}
		\wp_{\ell}^{\mu,\rho'}(F)\leq \sum_{k \in \Z^{\nu}}e^{\rho'\abs k}\wp_{\ell}^\mu(\wh F(k)) 
		\stackrel{\eqref{ipotesi}}{\leq} 
		\sum_{k \in \Z^\nu} e^{(\rho' -\rho)|k|} \la k \ra^\tau C_l \leq \frac{C_l }{(\rho-\rho')^{\tau+\nu}} 
		\ .
	\end{equation}
%	Using the assumption, the claim follows, noting that:
%	$$ \sum_{k \in \Z^{\nu}}\braket{k}^{\tau}e^{-(\rho-\rho')\abs k} \leq \frac{1}{(\rho-\rho')^{\tau+\nu}}\int_{0}^{\infty}s^{\tau+\nu-1}e^{-s}\wrt s \leq \frac{C_\ell}{(\rho-\rho')^{\tau+\nu}} \ .$$
\end{proof}

In the next Proposition we essentially prove that pseudodifferential operators as in Definition \ref{def:pseudo_lip}  have  matrices which belong to the classes $\lip_\tw(\Omega,\cM_{\rho,s})$  extended from Definition \ref{def:mat}.
\begin{prop}
\label{prop:optoma}
Let $F \in \lip_\tw(\Omega,\cP\cA^\mu_\rho)$, with $\rho >0$.  
 For any $0 < \rho' < \rho$ and $s >\frac{1}{2}$,  the matrix of the operator
$$
\la D \ra^{\alpha} F \,  \la D\ra^{\beta}  , \ \quad   \alpha + \beta  + \mu \leq 0 \ , 
$$
belongs to $\lip_\tw(\Omega,\cM_{\rho', s})$.
Moreover for any $s >\frac{1}{2}$, $\forall \alpha+ \beta \leq - \mu$,  there exists $\sigma >0$ such that
\begin{equation}
\label{est:optoma}
\abs{\la D \ra^{\alpha} F \,  \la D\ra^{\beta}}_{\rho', s,\Omega}^\wlip{\tw} \leq \frac{C}{(\rho - \rho')^\nu}  \ \wp^{\mu, \rho}_{s+\sigma}(F)_\Omega^\wlip{\tw} . 
\end{equation}
\end{prop}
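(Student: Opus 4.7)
The plan is to estimate each matrix entry of $A := \braket{D}^{\alpha} F \, \braket{D}^{\beta}$ in the sine basis of $\cH^0$, obtain rapid decay in $|n-m|$ by integration by parts in $x$, then extract analyticity in $\theta$ via Cauchy estimates, and finally sum over both indices.

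First I would unravel the matrix entries. Since $F \in \cP\cA^\mu_\rho$, Lemma \ref{lem:pp_class} gives $f(\theta,x,j)=f(\theta,-x,-j)$, so $F$ extends to $L^2(\T)$ preserving parity and $\cH^0$ identifies with $L^2_{odd}(\T)$. Because $\braket{D}^\sigma$ is diagonal in the sine basis with eigenvalue $\braket{m}^\sigma$ on $\sin(mx)$, one has
\[
A_m^n(\theta) = \braket{n}^\alpha\, F_m^n(\theta)\, \braket{m}^\beta,
\]
and a direct computation shows that $F_m^n(\theta)$ is a finite linear combination (with harmless numerical coefficients) of exponential-basis Fourier coefficients $\hat f_x(\theta,\pm n\pm m,\pm m)$, where $\hat f_x(\theta,h,j) := \frac{1}{2\pi}\int_\T f(\theta,x,j)e^{-\im h x}\, dx$. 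Since $m,n\ge 0$, all the relevant frequency shifts $h'\in\{\pm n\pm m\}$ satisfy $\braket{h'}\ge \braket{n-m}$, so it suffices to establish decay in $\braket{n-m}$.

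Next I would produce the pointwise bound. Integrating by parts $N$ times in $x$, using the symbol estimate $|\partial_x^N f(\theta,x,j)| \le \wp_N^{\mu,\rho}(F)\braket{j}^\mu$ (valid for $|\mathrm{Im}\,\theta|\le\rho$), gives
\[
|\hat f_x(\theta,h,j)| \le \wp_N^{\mu,\rho}(F)\,\braket{j}^\mu\, \braket{h}^{-N}.
\]
Applying Peetre's inequality $\braket{n}^\alpha \le 2^{|\alpha|}\braket{m}^\alpha\braket{n-m}^{|\alpha|}$ and the standing hypothesis $\alpha+\beta+\mu\le 0$ (so $\braket{m}^{\alpha+\beta+\mu}\le 1$), we obtain
\[
|A_m^n(\theta)| \le C\,\wp_N^{\mu,\rho}(F)\,\braket{n-m}^{|\alpha|-N}, \qquad |\mathrm{Im}\,\theta|\le\rho.
\]

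Finally, Cauchy estimates in the strip give $|\widehat{A}_m^n(k)|\le e^{-\rho|k|}\sup_{|\mathrm{Im}\,\theta|\le\rho}|A_m^n(\theta)|$. Fixing $\sigma := |\alpha|+\lceil s\rceil+1$ so that $\sum_h \braket{h}^{2s-2(\sigma-|\alpha|)}<\infty$ yields $|\widehat A(k)|_s \le C_s\,\wp_\sigma^{\mu,\rho}(F)\,e^{-\rho|k|}$, and summing against $e^{\rho'|k|}$ produces the geometric-series bound
\[
|A|_{\rho',s} \le \sum_{k\in\Z^\nu} e^{(\rho'-\rho)|k|}\, C_s\,\wp_\sigma^{\mu,\rho}(F) \le \frac{C_{s,\nu}}{(\rho-\rho')^\nu}\,\wp_\sigma^{\mu,\rho}(F),
\]
which is \eqref{est:optoma}. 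The Lipschitz part in $\omega$ follows verbatim by applying the same argument to $F(\omega_1)-F(\omega_2)\in\cP\cA^\mu_\rho$ and replacing the sup seminorms by the Lipschitz ones. The main technical point — and the only thing beyond bookkeeping — is the combined use of Peetre's inequality and the constraint $\alpha+\beta+\mu\le 0$ to convert the potentially dangerous growth $\braket{n}^\alpha\braket{m}^{\beta+\mu}$ into a polynomial in $\braket{n-m}$, which the $\langle h\rangle^{-N}$ from integration by parts then absorbs.
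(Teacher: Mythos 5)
Your proof is correct, and its analytic core --- integration by parts $N$ times in $x$ to gain $\la n-m\ra^{-N}$ decay, then a shift of the $\theta$-contour (Cauchy estimates) to gain $e^{-\rho|k|}$, then summation in $h=n-m$ and in $k$ --- is exactly the core of the paper's proof. The one genuine difference is in how the weights $\la n\ra^\alpha\,\la m\ra^{\beta+\mu}$ are absorbed. The paper first handles the normalized case $\mu=\alpha=\beta=0$ directly, and then disposes of the general case in a single line by symbolic calculus: by Remarks \ref{rem:comp}, \ref{DPA}, \ref{rem:comp_pp}, the composition $\la D\ra^\alpha F\la D\ra^\beta$ is itself a parity-preserving pseudodifferential operator of order $\alpha+\beta+\mu\le 0$, hence of order $0$, with seminorms controlled by those of $F$; the normalized case then applies verbatim. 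You instead estimate the matrix coefficients of the composite directly, using Peetre's inequality $\la n\ra^\alpha\le 2^{|\alpha|}\la m\ra^\alpha\la n-m\ra^{|\alpha|}$ together with $\alpha+\beta+\mu\le 0$ to kill the diagonal growth, and pay for the stray $\la n-m\ra^{|\alpha|}$ by taking $N$ larger. Both routes are sound: the paper's reduction is conceptually cleaner and lets the symbolic calculus do the bookkeeping, while yours is more elementary and self-contained, at the cost of tracking the Peetre factor and the slightly larger loss $\sigma$ in the seminorm index. Your treatment of the sine-to-exponential passage, the parity reduction, the $m=n$ diagonal case (where $\la n-m\ra^{-N}=1$ is harmless), and the Lipschitz part by applying the same argument to $F(\omega_1)-F(\omega_2)$ are all consistent with what the paper does.
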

\begin{proof}
Since $\braket{D}\in\cP\cA^1$ is clearly independent of parameters, without loss of generality let $F$ belong to $\cP\cA_\rho^\mu $. We start by proving the result in the case  $\mu = \alpha = \beta = 0$. Let an arbitrary $s>\frac{1}{2}$ be fixed. 
Then
%\begin{align*}
%\wh F_m^n(k) &:=   \frac{1}{(2\pi)^\nu} \int_{\T^\nu\times[0,\pi]}  f(\theta, x, m) \, \sin(m x)\,  \sin(nx) \, e^{- \im k \cdot \theta } \,   \di \theta \di x  \\
%& = \frac{1}{2(2\pi)^\nu} \int_{\T^\nu\times[-\pi,\pi]}  f(\theta, x, m) \, \sin(m x)\,  \sin(nx) \, e^{- \im k \cdot \theta } \,   \di \theta \di x \\
%& = \frac{1}{2(2\pi)^\nu} \int_{\T^{\nu+1}}  f(\theta, x, m) \, \frac{\cos((m-n) x) - \cos((m+n)x)}{2}\,   e^{- \im k \cdot \theta } \,  \di \theta  \di x \ ,
%\end{align*}
\begin{equation}
	\begin{split}
	\wh F_m^n(k) & :=  \frac{1}{(2\pi)^\nu}\int_{\T^\nu\times[0,\pi]}F(\theta,x,D_x)[\sin(mx)]\sin(nx)e^{-\im k\cdot \theta} \wrt \theta \wrt x \\
	 & = \frac{1}{2(2\pi)^\nu}\int_{\T^\nu\times[-\pi,\pi]}F(\theta,x,D_x)[\sin(mx)]\sin(nx)e^{-\im k\cdot \theta} \wrt \theta \wrt x\\
	& =  \frac{1}{4(2\pi)^\nu}\int_{\T^{\nu+1}}f(\theta,x,m)(e^{\im(m-n)x}-e^{\im(m+n)x})e^{-\im k\cdot \theta} \wrt \theta \wrt x \ ,
	\end{split}
\end{equation}
where $f\in \cP S_\rho^m$ is the symbol of $F$.
Consider first the case $m \neq n$. Then, integrating by parts $\wt s$-times in $x$, with $\wt s:=\lfloor s+2 \rfloor +1 $, and shifting the contour of integration in $\theta$ to $\T^\nu - \im \rho\,\sgn(k)$ (here $\sgn(k):=(\sgn(k_1),...,\sgn(k_\nu))\in\{-1,1\}^\nu$),  one gets that for any $n, m \in \N$, $n\neq m$, $ k \in \Z^\nu$ 
\begin{align*}
\abs{\wh F_m^n(k)} & \leq e^{-\rho |k|} \left(\frac{1}{|m+n|^{\wt s}} + \frac{1}{|m-n|^{\wt s}}\right) \,
\sup_{|{\rm Im \theta }| < \rho  \atop (x, m) \in \T \times \N} \abs{\partial_x^{\wt s} f(\theta; x, m)} 
  \leq  \frac{ 2  e^{-\rho |k|}}{|m-n|^{\wt s}}\,  \wp^{0, \rho}_{\wt s}(f) \ .
\end{align*}
If $m=n$, in a similar way one proves the bound 
$\sup_{m \in \N} \abs{\wh F_m^m(k)} \leq  e^{-\rho |k|}\,  \wp^{0, \rho}_{0}(f)$.
It follows  that for any $ 0<\rho' < \rho$, one has 
$\abs{F}_{\rho', s} \leq C (\rho-\rho')^{-\nu}\wp^{0, \rho}_{\wt s}(f)< \infty$, which proves \eqref{est:optoma} in the case $\alpha = \beta = \mu =0$.
To treat the general case, it is sufficient to note that,  by  Remarks \ref{rem:comp}, \ref{DPA} and  \ref{rem:comp_pp}, the operator $\la D \ra^{\alpha} F \,  \la D\ra^{\beta} \in \cP\cA^0_\rho$, so we have
\begin{equation}
\abs{\la D \ra^{\alpha} F \,  \la D\ra^{\beta}}_{\rho', s} \leq \frac{C}{(\rho - \rho')^\nu}  \ \wp^{0, \rho}_{s+\sigma}(\la D \ra^{\alpha} F \,  \la D\ra^{\beta}) \leq \frac{C_{\alpha, \beta}}{(\rho - \rho')^\nu}  \ \wp^{\mu, \rho}_{s+\sigma}( F ) . 
\end{equation}
\end{proof}

\subsection{Proof of Lemma \ref{lem:emb} (Embedding)}
The result now follows immediately by applying Proposition \ref{prop:optoma} to $F\in\lip_\tw(\Omega,\cP\cA_\rho^{-\alpha})$ and $G\in\lip_\tw(\Omega,\cP\cA_\rho^{-\beta})$. Indeed, we obtain
$$ \abs{\braket{D}^{\sigma}F\braket{D}^{-\sigma}}_{\rho',s,\Omega}^\wlip{\tw},
 \ \ \  \ 
\abs{\braket{D}^\alpha F}_{\rho',s,\Omega}^\wlip{\tw},
\ \ \ \ 
\abs{F\braket{D}^\alpha}_{\rho',s,\Omega}^\wlip{\tw}\leq \frac{C}{(\rho-\rho')^\nu}\wp_{s+\sigma}^{-\alpha,\rho}(F)_\Omega^\wlip{\tw} \ . $$
The estimates for $G$ are analogous.

\subsection{Proof of Lemma \ref{rem:alg}}
\label{alg.s.decay}
Denote by $A_e$ the extension of the operator $A$ on $L^2(\T)$ which coincides with $A$ on $L^2_{odd}(\T) \equiv \cH^0$ and  is identically zero on $L^2_{even}(\T)$.
Since $A_e$ is parity preserving, one verifies for any $m, n \in \Z$ that  $\la A_e \, e^{\im m x}, e^{\im n x} \ra_{L^2(\T)} = 2\la A \sin(m x), \sin(n x)\ra_{\cH^0}$ .
Therefore, \eqref{eq:algebra_sdecay} is equivalent to the classical algebra property developed on the exponential basis (for instance, see \cite{bebo13}); we skip the details.

\subsection{Proof of Lemma \ref{lem:com}(Commutator)}
\label{proof:lem:com}
We start with operators independent of $\theta\in\T^\nu$. Let 
$$
\bX = 	\left(\begin{matrix}
			X^d&X^o\\-\overline{X^o}&-\overline{X^{d}}
			\end{matrix}\right) \ , 
			\qquad  \bV = \left(\begin{matrix}
			V^d&V^o\\-\bar{V^o} & -\bar{V^d}
			\end{matrix}\right) \ .
$$
One has
\begin{equation*}
	\im [\bX,\bV] = \im\left( \bX\bV-\bV\bX \right) =  \left( \begin{matrix}
	\im Z^d & \im Z^o \\ -\wo{(\im Z^o)} & -\wo{(\im Z^d)}
	\end{matrix} \right) \ ,
\end{equation*}
where
\begin{align*}
	Z^d  := X^dV^d-X^o\wo{V^o} - V^dX^d+V^o\wo{X^o} , \qquad  \ \ \ 
	Z^o  := X^dV^o-X^o\wo{V^d} - V^dX^o+V^o\wo{X^d} \ . 
\end{align*}
%\begin{equation}
%	Z^d := X^dV^d-X^o\wo{V^o} - V^dX^d-V^o\wo{X^o} \ , \quad Z^0  := X^dV^o-X^o\wo{V^d} - V^dX^o-V^o\wo{X^d}
%\end{equation}
%We start with the elements on the diagonal. Without caring about conjugate operators, by Remark \ref{rem:ana}, the following inequalities hold (here $\delta=d,o$, while $\sigma=\pm \alpha,0$):
%\begin{equation}
%	\begin{split}
%	\abs{\braket{D}^{\sigma}X^{\delta}V^{\delta}\braket{D}^{-\sigma}}_s & = \abs{\braket{D}^{\sigma}X^{\delta}\braket{D}^{-\sigma}\braket{D}^{\sigma}V^{\delta}\braket{D}^{-\sigma}}_s\\
%	& \leq \abs{\braket{D}^{\sigma}X^{\delta}\braket{D}^{-\sigma}}_s\abs{\braket{D}^{\sigma}V^{\delta}\braket{D}^{-\sigma}}_s\\
%	\abs{\braket{D}^{\alpha}X^{\delta}V^{\delta}}_s & \leq \abs{\braket{D}^{\alpha}X^{\delta}}_s\abs{V^{\delta}}_s\\
%	\abs{X^{\delta}V^{\delta}\braket{D}^{\alpha}}_s & = \abs{X^{\delta}\braket{D}^{\alpha}\braket{D}^{-\alpha}V^{\delta}\braket{D}^{\alpha}}_s\\
%	& \leq \abs{X^{\delta}\braket{D}^{\alpha}}_s\abs{\braket{D}^{-\alpha}V^{\delta}\braket{D}^{\alpha}}_s
%	\end{split}
%\end{equation}
%For the anti-diagonal, instead, we have to check less terms (for sake of simplicity, here $d^*=o$ and viceversa):
%\begin{equation}
%	\begin{split}
%	\abs{\braket{D}^{\sigma}X^{\delta}V^{\delta^*}\braket{D}^{-\sigma}}_s & = \abs{\braket{D}^{\sigma}X^{\delta}\braket{D}^{-\sigma}\braket{D}^{\sigma}V^{\delta^*}\braket{D}^{-\sigma}}_s\\
%	& \leq \abs{\braket{D}^{\sigma}X^{\delta}\braket{D}^{-\sigma}}_s\abs{\braket{D}^{\sigma}V^{\delta^*}\braket{D}^{-\sigma}}_s
%	\end{split}
%\end{equation}
Omitting sake of simplicity conjugate operators and labels for diagonal and anti-diagonal elements, by Remark \ref{rem:ana}, the following inequalities hold (here $\sigma=\pm \alpha,0$):
\begin{equation}
\begin{split}
\abs{\braket{D}^{\sigma}XV\braket{D}^{-\sigma}}_s 
& \leq C_s \abs{\braket{D}^{\sigma}X\braket{D}^{-\sigma}}_s\abs{\braket{D}^{\sigma}V\braket{D}^{-\sigma}}_s \ ; \\
\abs{\braket{D}^{\alpha}XV}_s & \leq C_s \abs{\braket{D}^{\alpha}X}_s\abs{V}_s \ ;\\
\abs{XV\braket{D}^{\alpha}}_s 
& \leq C_s \abs{X\braket{D}^{\alpha}}_s\abs{\braket{D}^{-\alpha}V\braket{D}^{\alpha}}_s \ ;
\end{split}
\end{equation}
the same for those terms involving $VX$.
All these norms extend easily to the analytic case. Therefore, by the assumption and from the definition in \eqref{eq:sdecay_matrix_norm}, properties \ref{M1}, \ref{M2} and \ref{M3} are satisfied. It remains to show the symmetries conditions in \eqref{struttura}. Note that
$
	(\im Z^d)^* = \im Z^d$ and $ (\im Z^o)^*=\wo{\im Z^o} $ if and only if  $ (Z^d)^*=-Z^d , \ (Z^o)^*=\wo{Z^o}$.
We check the condition for $Z^d$. We have
\begin{equation}
	\begin{split}
	(Z^d)^* &  =  (V^d)^*(X^d)^*-(\wo{V^o})^*(X^o)^*-(X^d)^*(V^d)^*+(\wo{X^o})^*(V^o)^* \\
	& = V^d X^d - V^o \wo{X^o} - X^d V^d + X^o \wo{V^o}^* = - Z^d \ .
	\end{split}
\end{equation}
In the same way one checks that $(Z^o)^*=\wo{Z^o}$.
The Lipschitz dependence is easily checked.

\small
 
\newcommand{\etalchar}[1]{$^{#1}$}
\def\cprime{$'$}

\end{document}